\documentclass{amsart}

\usepackage[hmargin=3.5cm,top=3cm,bottom=3cm,includehead]{geometry}

\usepackage{mathabx} 

\usepackage{amsmath, amssymb, amsthm, mathrsfs}
\usepackage{graphicx,enumitem} 
\usepackage{cancel}
 
\usepackage{xcolor}
\usepackage[pdfborder={0 0 0}]{hyperref}

\newcommand\R{{\mathbb R}}

\def\BB{{\mathcal B}}

\def\DD{{\mathcal D}}

\def\HH{{\mathcal H}}
\def\II{{\mathcal I}}

\def\NN{{\mathcal N}}
\def\OO{{\mathcal O}}

\def\CCC{{\mathscr C}}

\def\LLL{{\mathscr L}}

\def\NNN{{\mathscr N}}

\def\RRR{{\mathscr R}}

\def\UUU{{\mathscr U}}
\def\VVV{{\mathscr V}}

\def\XXX{{\mathscr X}}

 \def\Wloc{W_{\mathrm{loc}}}

\def\Lloc{L_{\mathrm{loc}}}

\def\cc{{\frak c}}

\def\eps{{\varepsilon}}
\def\wto{{\,\rightharpoonup\,}}
\def\Lloc{L_{\rm loc}}

\def\fraka{{{\mathfrak {a}}}}
\def\frakb{{\frak b}}

\newtheorem{theo}{Theorem}[section]
\newtheorem{prop}[theo]{Proposition}
\newtheorem{lem}[theo]{Lemma}
\newtheorem{cor}[theo]{Corollary}
\newtheorem*{thm*}{Theorem}
\theoremstyle{remark}
\newtheorem{rem}[theo]{Remark}

\newtheorem*{ex*}{Example}
\theoremstyle{definition}
\newtheorem{definition}[theo]{Definition}

\numberwithin{equation}{section}
\newcommand{\be}{\begin{equation}}
\newcommand{\ee}{\end{equation}}
\newcommand{\ba}{\begin{aligned}}
\newcommand{\ea}{\end{aligned}}
\newcommand{\beqn}{\begin{equation}}
\newcommand{\eeqn}{\end{equation}}
\newcommand{\bear}{\begin{eqnarray}}
\newcommand{\eear}{\end{eqnarray}}
\newcommand{\bean}{\begin{eqnarray*}}
\newcommand{\eean}{\end{eqnarray*}}

\newcommand{\Black}{\color{black}}

\title[Existence of solutions to the VCk equation]
{Existence of solutions to the Voltage-Conductance kinetic equation in a general conductivity regime
}

\date{\today}

\author[C. Fonte Sanchez]{C. Fonte Sanchez}

\author[S. Mischler]{S. Mischler}

\author[D. Salort]{D. Salort}

\address[C. Fonte Sanchez]{Institute for individualized immunotherapies,
iiT Research \& Development GmbH, Strubergasse 18/3, 5020 Salzburg
}
\email{c.fonte@iit-research.at}

\address[S.~Mischler]{Centre de Recherche en Math\'ematiques de
  la D\'ecision (CEREMADE, CNRS UMR 7534),
  Universit\'es PSL \& Paris-Dauphine, Place de Lattre de
  Tassigny, 75775 Paris 16, France \& Institut Universitaire de France (IUF)}
\email{mischler@ceremade.dauphine.fr}

\address[D. Salort] {Sorbonne Universit{\'e}, CNRS,   Universit\'{e} de Paris,  Laboratoire Jacques-Louis Lions, F-75005 Paris, Institut Universitaire de France (IUF), Supported by fondation Cino et Simone Del Duca.}\email{delphine.salort@sorbonne-universite.fr}
 
\subjclass[2020]{35Q84, 35B40}
 
\keywords{Kinetic Fokker-Planck equation, Mathematical neuroscience}

\begin{document}

\begin{abstract} We establish the existence of a weak solution to the Voltage-Conductance kinetic equation for any conductivity  parameter and any reasonable initial datum, and in particular without smallness condition as it was the case in \cite{CFS+SM}. The proof is based on some a priori estimates  mainly drawn from \cite{MR3177631}, some trace results adapted from \cite{MR2721875} 
and a DiPerna-Lions weak stability argument.

\end{abstract}

\maketitle

\tableofcontents
 
\section{Introduction}

\subsection{The VCk equation} 
In this paper, we are concerned with the existence of solutions issue  for  the Voltage-Conductance kinetic (VCk) equation modeling the time evolution of a neuron network, see \cite{MR3177631}. 
We thus consider the VCk equation  
\beqn\label{eq:VCk}
\partial_t F  + \partial_v(JF) + \partial_y(K_F F) - a_F \partial_{yy}^2 F = 0
\quad\text{in}\quad (0,\infty) \times \OO
\eeqn
on the probability distribution $F =  F(t,v,y) \ge 0$ of neurons which have voltage $v \in (0,v_F)$ and conductance $y > 0$ at time $t \ge 0$.
Here, we use  the shorthand $(v,y) \in \OO := (0,v_F) \times (0,\infty) $ and  the coefficients are given by 
 \bean
 J &:=& y (v_E-v) - y_L v, 
 \\
 K_F &:=& y_* + \cc\, \NN_F - y, 
 \\
a_F &:=& a_* + \cc^2 \NN_F , 
\eean
 with $v_E > v_F > 0$,  $y_L,a_*, y_* > 0$, $\cc \ge 0$. 
  For $F  : \bar\OO \to \R$, the global firing rate is defined as the current generated by all the  spiking neurons,  that is $\NN_F:= \NN(\gamma_+ F)$ with 
 \beqn
 \label{eq:defNNF}
 \NN(\gamma_+ F) (t)  :=    \int_0^\infty J_+(y) (\gamma_+ F) (t,y) dy, 
 \eeqn
where $\gamma_+ F$ is the trace of $F$ on the outgoing part of the boundary that we will define below, see \eqref{eq:defgamma1+}.  
It is worth noticing that the equation's nonlinearity, driven by the term $\NN_F$, is controlled by the connectivity parameter $\cc$. 
In fact, the equation becomes linear when there is no connection between neurons, which corresponds to $\cc = 0$. 
It is also worth noticing that, similarly to classical kinetic models, the noise term in the VCk equation acts exclusively on the conductance variable.

\smallskip
 We complement the VCk equation \eqref{eq:VCk} with  an initial condition
\beqn\label{eq:VCkt=0}
F(0,\cdot) = F_0 \ge 0\quad\text{in}\quad \OO,
 \eeqn
specifying the probability distribution at time zero. Additionally,   defining by $\gamma F$ the trace of the function $F$ on the boundary of the  domain,
 the following boundary conditions are imposed: 
  \bear \label{eq:VCktBd1}
&& \gamma F = 0  \ \hbox{ on } \   (0,T) \times \Sigma_1 , \\
\label{eq:VCktBd2}
&& (J \gamma F)(t,0,y) =  (J \gamma F) (t,v_F,y)   \ \hbox{ for any } \  t \in (0,T), \ y \in (y_F,\infty),  \\
\label{eq:VCktBd0}
&& K_F   \gamma F - a_F \partial_y \gamma F = 0 \ \hbox{ on } \  (0,T) \times \Sigma_0 , 
 \eear
 where $T \in (0,\infty]$ and we define $y_F := y_L v_F/(v_E-v_F)$, 
 \bean
 \Sigma_0 := (0,v_F) \times \{ 0 \}, \quad
  \Sigma_1 = (\{ 0 \} \cup \{v_F \}) \times (0,y_F). 
 \eean
The first condition \eqref{eq:VCktBd1} ensures that there is no net movement of the probability density across the boundary at low conductance values. In \eqref{eq:VCktBd2}, note that 
$y_F$ is the solution to the equation $J(v_F,y)=0$, and more precisely, it is the point where the function $y\mapsto J(  v_F,y)$ changes of sign. For values of $y\leq y_F$, the vector field $J$ is negative near $v_F$ so that the trajectories move away from the boundary at $v_F$, i.e., for conductance values smaller than $y_F$,  
the neuron does not receive enough current to spike. On the other hand,  as $y$ pass the value $y_F$ the voltage of the neuron may reach $v_F$, spiking. In that case, it is reset to zero instantaneously, from where the second boundary condition for $y>y_F$. In \eqref{eq:VCktBd0}, the condition on $(0,T) \times \Sigma_0$ ensures mass conservation. 
 
 \smallskip
 We note $\Sigma_2 := (\{ 0 \} \cup \{v_F \}) \times (y_F,\infty)$, $\Sigma_{12} :=  (\{ 0 \} \cup \{v_F \}) \times (0,\infty)$,  $\Sigma := \Sigma_{12} \cup \Sigma_0$, $\Gamma = (0,T) \times \Sigma$, $\Gamma_i = (0,T) \times \Sigma_i$, $i=0,1,2,12$, and $n_v := + 1$ for $v = v_F$, $n_v := - 1$ for $v=0$, 
 $n_0 := -1$. For further references, we also denote 
$$
\Sigma_i^{\pm} := \{(v,y) \in \Sigma_i; \, \pm J n_v > 0 \}, 
$$
so that 
$$
\Sigma_1^+ = \emptyset, \  \Sigma_1^- = \Sigma_1, \ \Sigma_2^- = \{0\} \times (y_F,\infty), \  \Sigma_2^+ = \{v_F\} \times (y_F,\infty),
$$
and we next define similarly $\Sigma_{12}^- = \Sigma_{1}^- \cup \Sigma_2^-$, $\Gamma_i^\pm := (0,T) \times \Sigma_i^\pm$. 
We will sometime summarize \eqref{eq:VCktBd0}-\eqref{eq:VCktBd1}-\eqref{eq:VCktBd2} with the shorthand 
\beqn\label{eq:VCkBd}
\RRR_F \gamma F = 0 \quad\text{on}\quad \Gamma. 
 \eeqn
We will also sometime use the shorthands
\beqn\label{eq:defgamma1+}
\gamma_1 F = \gamma F {\bf 1}_{\Sigma_1}, \quad \gamma_\pm F = \gamma F {\bf 1}_{\Sigma^\pm_2},
\eeqn
what in particular explain the definition \eqref{eq:defNNF}. Similarly we may use the shorthands  
\bean
&&J_+(y) := J(v_F,y) {\bf 1}_{y > y_F} = (v_E-v_F) (y-y_F)_+  
\\
&&J_-(y) := J(0,y) {\bf 1}_{y > y_F} = y v_E.
\eean
 

\smallskip
 The two important general properties of the model are that (at least formally) any solution is mass conservative, that is 
\beqn\label{eq:MassCons}
\langle F(t,\cdot) \rangle = \langle F_0 \rangle, \quad \forall \, t \ge 0, \quad \langle F \rangle :=  \int_\OO F dvdy, 
\eeqn
  and it conserves positivity, that is
\beqn\label{eq:PositiveCons}
F(t,\cdot) \ge 0, \quad \forall \, t \ge 0. 
\eeqn
For convenience, we may and will assume that $F_0$ is a probability measure and thus also is $F(t,\cdot)$ for any $t \ge 0$.   

\subsection{Main results}


Our main result establishes the existence of weak solutions to the  Voltage-Conductance kinetic (VCk) equation.

\begin{theo}\label{theo-Exists} 
For any initial datum $ F_0 \ge 0$ such that $F_0 (1+ y^2 + |\log F_0|) \in L^1(\OO)$ with unit mass ($\langle F_0 \rangle = 1$)  and any connectivity parameter $\cc \ge 0$, there exists at least one  solution $0 \le F \in C([0,\infty);L^1(\OO))$ 
to the Voltage-Conductance kinetic (VCk) equation   \eqref{eq:VCk},  \eqref{eq:defNNF}, \eqref{eq:VCkt=0}, \eqref{eq:VCktBd0}, \eqref{eq:VCktBd1},  \eqref{eq:VCktBd2} which furthermore satisfies \eqref{eq:MassCons} and the natural bounds.
\end{theo}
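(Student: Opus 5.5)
The plan is to build a regularized problem, derive a priori bounds uniform in the regularization, and pass to the limit by a DiPerna--Lions weak stability argument; the delicate part is identifying the limit of the nonlinear firing rate $\NN_F$.

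\textbf{Regularization.} For $\eps>0$ we replace $\NN_F$ in $K_F$ and $a_F$ by the truncated rate $\NN^\eps := \NN_F \wedge \eps^{-1}$, possibly also averaged in time over a window of size $\eps$. With $\NN^\eps$ frozen as a given bounded function of time, the equation is a linear kinetic Fokker--Planck problem with bounded coefficients and the boundary conditions \eqref{eq:VCkBd}. We solve it by a Banach fixed point on $\NN^\eps \in L^\infty(0,T)$, using the linear theory and the trace results adapted from \cite{MR2721875} to control $\gamma_+F$ in $L^1(J_+dydt)$. Because the truncation makes all coefficients bounded, the fixed point works for small $T$ and then iterates globally, thanks to mass conservation.

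\textbf{Uniform estimates.} The key step is to derive bounds uniform in $\eps$, following \cite{MR3177631}. First, mass conservation \eqref{eq:MassCons} and positivity hold. Second, a moment bound on $\int F y^2$ comes from the drift $-y$ in $K_F$. To absorb the positive contribution $\cc\,\NN\int yF$ coming from $\cc\NN y$, we test the equation against a weight such as $v$ or $y v$. This gives the crucial estimate $\int_0^T \NN^\eps\,dt \le C(T)$ with $C$ independent of $\cc$-smallness: testing \eqref{eq:VCk} with $\phi(v)=v$, the boundary term produces $v_F\int J_+\gamma_+F = v_F\NN$, and it is bounded by $\int |J|F \le C(1+\int yF)$. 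Combined with the $y$-moment estimate by Gronwall (the moment bound is linear in $\NN$, and $\NN$ is bounded in $L^1_t$ by the moment), this closes. Third, an entropy estimate for $\int F\log F$ gives the dissipation $\int a_F |\partial_y F|^2/F \in L^1_t$. The boundary terms from the reset condition \eqref{eq:VCktBd2} are handled via $J_-\gamma_-F=J_+\gamma_+F$, and the $|\log F_0|$ and $y^2$ assumptions bound $\int F|\log F|$ from below. These are the "natural bounds".

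\textbf{Passage to the limit.} Equi-integrability (from the entropy) gives $F^\eps \wto F$ weakly in $L^1$, and $\NN^\eps$ is bounded in $L^1(0,T)$. The main obstacle is identifying $\lim \NN^\eps = \NN(\gamma_+F)$, since the traces need not converge and $\NN^\eps$ a priori only converges weakly-$*$ as measures in time. We would try the following. Velocity averaging in $v$ and the $\partial_y$ dissipation give strong $L^1$ compactness of $F^\eps$. A representation of $\NN^\eps$ through the weak formulation, $\NN^\eps(t)\approx \frac{d}{dt}\int \phi F^\eps$ plus bulk terms, with $\phi$ localized near $v=v_F$ and $y>y_F$, shows that the limit measure equals $\NN(\gamma_+F)$. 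It also shows that $\NN^\eps$ is equi-integrable in time, obtained by bounding $\int_I\NN^\eps$ for short intervals $I$ using the entropy and moment bounds. Using the traces lemma, we pass to the limit in the renormalized formulation $\beta(F)$, DiPerna--Lions style. With $K,a$ depending affinely on $\NN$ and $\NN^\eps\to\NN$ weakly in $L^1_t$ while $F^\eps\to F$ strongly, the products $\NN^\eps F^\eps$ converge in distributions. Finally, the weak time continuity from the equation, together with strong compactness and the renormalization argument, upgrades this to $F\in C([0,\infty);L^1(\OO))$, and mass conservation passes to the limit by the uniform $y$-moment bound.
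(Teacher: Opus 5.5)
Your overall plan (truncation, bounds uniform in the truncation, weak stability, and identification of the limiting flux through the Green formula of Theorem~\ref{theo:trace2}) is the paper's. Your moment--flux loop also closes, though not by ``absorbing'' anything. Since $\int yF\le(\int y^2F)^{1/2}$, you get $\frac{d}{dt}(1+\int y^2F)^{1/2}\lesssim 1+\NN$, with $\NN$ entering additively. The $v$-test gives $\int_0^t\NN\lesssim 1+\int_0^t\!\int yF$, so a linear Gronwall argument applies; the paper instead uses a first moment with a weight flat near $y=0$.

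The genuine gap is the equi-integrability of $\NN^\eps$ in time, on which the whole passage to the limit rests. If you only know that $\NN^\eps\,dt$ converges weakly-$*$ as measures, strong $L^1(\UUU)$ convergence of $F^\eps$ does not let you pass to the limit in $\int_0^T\NN^\eps\bigl(\int_\OO F^\eps\partial_y\psi\bigr)dt$. Moreover, a singular part of the limit measure would have no counterpart in $\NN(\gamma_+F)$. Your representation cannot supply this equi-integrability. Testing with $\phi$ localized near $\{v_F\}\times(y_F,\infty)$ bounds $\int_I\!\int J_+\gamma_+F^\eps\phi$ by the oscillation of $t\mapsto\int_\OO\phi F^\eps$ over $I$, plus bulk terms. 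But the time derivative of $\int_\OO\phi F^\eps$ is exactly that boundary flux plus the same bulk terms, so uniform smallness of the oscillation is equivalent to the claim itself. Velocity averaging gives compactness in $L^1(\UUU)$ but no uniform equicontinuity in time.

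What is missing is an entropy estimate on the trace itself, which has two parts.
\begin{itemize}
\item \textbf{Near $y_F$.} In the global entropy balance, the reset condition \eqref{eq:VCktBd2} is not just something to be handled. It produces the nonnegative dissipation $\int_0^T\!\int J_+\gamma_+F\log(J_-/J_+)$, which controls the trace near $y_F$, where $J_+$ vanishes (Proposition~\ref{prop:APB2}).
\item \textbf{Away from $y_F$.} Test the renormalized equation for $F\log F$, not for $F$, against a cutoff $\chi_0$ supported in $\{v\ge v_F/2,\ y\ge y_F\}$. The coefficients then enter only through $\int_0^T(1+\NN)$, which is already bounded, and you obtain $\int_0^T\!\int\chi_0J_+\gamma_+F\log\gamma_+F\le C$ (Proposition~\ref{prop:APB3}).
\end{itemize}
Together with the flux moment $\int_0^T\!\int J_+\gamma_+F\langle y\rangle\le C$, Dunford--Pettis gives weak compactness of $\gamma_+F^\eps$ in $L^1(J_+\,dy\,dt)$ (Corollary~\ref{cor:compactness}). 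Hence $\NN^\eps\wto\NN(\bar\gamma_+)$ weakly in $L^1(0,T)$. Only then can you pass to the limit in the nonlinear terms and identify $\bar\gamma=\gamma F$.

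A smaller point concerns your fixed point for the regularized problem. With truncation alone, the trace estimates control the flux only in $L^2(0,T)$, never pointwise in time, so there is no contraction in $L^\infty(0,T)$. The causal time average you mention is therefore genuinely needed, and the bounds above survive it in time-integrated form. Otherwise you are led to the paper's Schauder argument, which requires strong compactness of the outgoing traces.
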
 

We will make more precise the meaning in the next sections. 
The {\it natural bounds} will be established in Section~\ref{sec:A priori estim}. 
The definition of solutions will be given in Section~\ref{sec:def&stab} with the help of the trace theorems established in Section~\ref{sec:TracePb}. 
%
%
This result improves the existence result in \cite{CFS+SM}, where some restrictions on the connectivity parameter $\cc \ge 0$ and on the initial datum $F_0$ are imposed (a combination of the two must be small enough). The drawback is that in our present framework, the solutions are not proved to be uniformly bounded (in time). We also refer to \cite{MR4548856,MR4770834,CFS+SM}, where the stability issue of the VCk equation is addressed. 

\smallskip
The difficulty and the main mathematical interest come from the weak regularity of the mean flux term $ \NN(\gamma_+ F)$ which is involved in the nonlinearity nature of the equation. 

\smallskip\smallskip
Let us briefly describe the organization of the paper and the strategy of the proof, which follows a classical line since (at least) DiPerna and Lions work \cite{MR1014927}.

In  Section~\ref{sec:A priori estim}, we first collect some {\it a priori  estimates} for the solutions to the VCk equations. Most of them have been yet established in \cite{MR3177631}. These  {\it a priori  estimates} make possible to give a sense to a weak formulation of the equation and provide some weak compactness for both sequence of solutions and the associated sequence of trace functions. 

In  Section~\ref{sec:TracePb}, we present a trace theory adapted to our framework by carrying on the renormalized theory of DiPerna-Lions 
\cite{MR972541,MR1022305,MR1014927} and the associated  trace theory first developed in \cite{MR1765137,MR1776840,MR2721875}, and  recently revisited  in \cite{CM-Landau**,CGMM**}. The novelty here comes from the fact that the diffusion coefficient $a_F$ is not a constant but a $L^1(0,T)$ function. It is worth emphasizing that our proof simplifies some of the arguments used in the similar result established in  \cite{MR2721875}. 

In Section~\ref{sec:def&stab}, we introduce the precise definition of solutions we deal with. We next present a fundamental stability result in the spirit of DiPerna-Lions theory \cite{MR972541,MR1014927,MR1776840,MR2721875}, establishing that we may pass to the limit in any sequence of solutions which satisfy uniformly the {\it natural bounds.} 
 This result yet contains the most important difficulties that may be encountered during the  proof of the existence result. 
The proof revisits and refines some arguments already used in \cite{CFS+SM} and takes advantage of a key elementary convergence result (Lemma~\ref{lem:AB2}) for passing to the limit in a sequence of products of functions. 

Section~\ref{sec:existence} is devoted to the proof of Theorem~\ref{theo-Exists}, that we split into three steps. In Section~\ref{sec:LinearPb}, 
we just summarize some results useful for the linear problem associated to the VCk equation. In Section~\ref{sec:truncated-pb}, we introduce a truncated problem and establish the existence of solutions to this one, which will provide a sequence of approximated solutions in the next step. The truncated problem slightly differs from the one used in \cite{CFS+SM}. For the present one, the proof of the existence is based on the application of the Schauder fixed point  Theorem (instead of the Tykonov fixed point Theorem in \cite{CFS+SM}), what requires to be able to prove the strong compactness of the sequence of the out-coming trace associated to  a bounded sequence of solutions, generalizing and  simplifying the proof of a similar result established in \cite{MR2721875}. 
Section  \ref{subsec:passingTOlimit} is finally devoted to passing to the limit in the sequence of approximated solutions. Because of the choice of the truncation problem, the sequence automatically satisfies the estimates of Section~\ref{sec:A priori estim} and we may pass to the limit and thus conclude to the existence by (almost) using the stability result of Section~\ref{sec:def&stab}.

\subsection{Acknowledgements and AI tool disclosure}

The authors thank the ANR project ChaMaNe (ANR-19-CE40-0024) for the organization of a neuroscience workshop in Ile-Rousse, February 2025, where this investigation project started, as well as the 
LJK, university of Grenoble, for its kind hospitality.  D. Salort is supported by the fundation Cino and Simone Del Duca, Institut de France. The Claude AI has been used to help proofread typos. It also suggested a classical convergence result which enabled us to make slightly shorter and  simpler  the proof of the trace Theorem~\ref{theo:renormalization}. Apart from that,  the text in this paper was fully generated by the authors.

 \medskip
\section{A priori estimates} 

\label{sec:A priori estim}
 
In this section, we present some a priori estimates (formally) satisfied by any solution to the nonlinear VCk equation. 
It complements the two pieces of  information about positivity and mass conservation already mentioned in the introductory section. 
All together, these estimates  provide the natural functional framework in which we will   work in the sequel. 
Because we will need the same estimates for the truncated problem in the last section, we rather consider the more general (possibly nonlinear) equation 
\beqn\label{eq:Gal-eq-for-APB}
\partial_t F  + \partial_v(JF) + \partial_y(\frak K F) - \fraka \partial_{yy}^2 F = 0, 
\eeqn
with $\frak K := \frakb - y$, $\frak b \lesssim   \frak a$, $0 \le \frak b \le \frak C (1+\NN_F)$, $ a_*\leq \frak a \le \frak C (1+\NN_F)$  for some constants $\frak C > 0$,  that we complement with the boundary conditions \eqref{eq:VCktBd1}, \eqref{eq:VCktBd2} and 
\beqn\label{eq:VCktBd1-linear}
 \frak K \gamma F - \fraka  \partial_y \gamma F = 0 \ \hbox{ on } \ \Gamma_0.
\eeqn
Of course, that equation includes the VCk equation presented in the introduction by just taking $\fraka := a_F$ and $\frakb := y_* + \frak c \NN_F$.

 \smallskip
  In the sequel, we consider a nice solution $F \ge 0$ to the equations \eqref{eq:Gal-eq-for-APB}, \eqref{eq:VCktBd1}, \eqref{eq:VCktBd2}, \eqref{eq:VCktBd1-linear}, 
 and we establish several a priori estimates. The full rigorous justification needs more material and will be given in Section~\ref{sec:existence}. 
In this more general framework, we start reproving the moment estimate established in \cite[Theorem~4]{MR3177631}. 
 
 \begin{prop}\label{prop:APB1} For $k \in \{ 1,2\}$  and $T >0$, there holds
\beqn\label{eq:APB1}
 \sup_{[0,T]} \int_\OO F \langle y \rangle^k dydv + \int_0^T\int_0^\infty (J_+ \gamma_+ F) (t,y) \langle y \rangle^{k-1} dy dt \le C\Bigl(T,\int_\OO F_0 \langle y \rangle^k dydv\Bigr), 
\eeqn
where $\langle y \rangle := 1 + y$ (with no risk of confusion with the definition of $\langle \cdot \rangle$ in \eqref{eq:MassCons}). 
 \end{prop}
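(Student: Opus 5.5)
We test the equation \eqref{eq:Gal-eq-for-APB} against the weights $\varphi = \langle y\rangle^k$ and $\varphi = v\langle y\rangle^{k-1}$ (or a combination), integrate over $\OO$, and use the boundary conditions to compute the boundary terms.

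\textbf{Weight $\varphi=\varphi(y)$.} For such a weight, the $v$-transport term produces only boundary contributions at $v=0$ and $v=v_F$. On $\Sigma_1$ they vanish by \eqref{eq:VCktBd1}, and on $\Sigma_2$ they cancel exactly thanks to the reset condition \eqref{eq:VCktBd2}. In the $y$-variable, the no-flux condition \eqref{eq:VCktBd1-linear} kills the flux boundary term at $y=0$. The remaining term is $\fraka\,\gamma F(v,0)\,\varphi'(0)$, which has the good sign because $\varphi'\ge0$ and appears with a minus. We thus get
\[
\frac{d}{dt}\int_\OO F\varphi = \int_\OO F\big[(\frakb - y)\varphi' + \fraka\varphi''\big] - \fraka\int_0^{v_F}\gamma F(v,0)\varphi'(0)\,dv .
\]
For $k=1$ the right-hand side is at most $\frakb\le \frak C(1+\NN_F)$ (using mass one). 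For $k=2$ it is at most $C\fraka + 2\frakb\int F\langle y\rangle - 2\int Fy\langle y\rangle \lesssim (1+\NN_F)\int F\langle y\rangle$. Thus everything hinges on controlling $\int_0^T\NN_F\,dt$, and more precisely $\int_0^T \NN_F\, M_1$ where $M_1:=\int F\langle y\rangle$.

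\textbf{Weight involving $v$, to produce the flux.} Testing with $\psi = v\,\chi(y)$, where $\chi = \langle y\rangle^{k-1}$, the transport term gives the boundary contribution $v_F\int_0^\infty J_+\gamma_+F\,\chi\,dy$ at $v=v_F$; nothing comes from $v=0$ because $\psi$ vanishes there. In the bulk we get $\int F J\chi$, with $J\le y v_E$. This yields
\[
v_F\int_0^T\!\!\int J_+\gamma_+F\,\chi \le \int_\OO F_0\psi + C\int_0^T\!\!\int F\langle y\rangle^{k} + \int_0^T\!\!\int F\big[(\frakb-y)\chi' + \fraka\chi''\big]\,v .
\]
For $k=1$ we have $\chi=1$ and the last term vanishes, so
\[
\int_0^T\NN_F \le C + C\int_0^T M_1 .
\]

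\textbf{Closing the estimates.} For $k=1$, combining with the first identity gives $M_1(t)\le M_1(0) + \frak C\, t + \frak C\int_0^t\NN_F$. Hence
\[
M_1(t)\le C + C\int_0^t M_1 ,
\]
and Gronwall gives the $k=1$ bound together with $\int_0^T\NN_F\le C(T)$.

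For $k=2$, the bad term is $2\frakb\int Fy\langle y\rangle$, which involves $\NN_F\,M_1$. I would instead absorb it using the dissipation $-2\int F y\langle y\rangle$ and bound $\frakb\int F y \le \frak C(1+\NN_F)M_1$. Since $M_1\in L^\infty(0,T)$ and $\NN_F\in L^1(0,T)$, this is integrable. So
\[
M_2(t)\le M_2(0) + C\int_0^t(1+\NN_F)\le C(T),
\]
where $M_2:=\int F\langle y\rangle^2$ and the bound $\fraka\le \frak C(1+\NN_F)$ is used for the $\fraka\varphi''$ term. Then the $\chi = \langle y\rangle$ test gives the weighted flux bound. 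In its right-hand side, the term $\int F\frakb\, v$ is bounded by $C(1+\NN_F)$, which is integrable, and the remaining terms are controlled by $M_2$.

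\textbf{Main obstacle.} The main obstacle is the circularity between $M_1$ and $\int\NN_F$: the drift $\frakb$ grows with the flux, and the flux is controlled only by moments. The step to verify carefully is the $v$-weighted identity, namely that the boundary terms at $v=0$ vanish and that those at $\Sigma_1$ and $y=0$ have signs compatible with the estimate.
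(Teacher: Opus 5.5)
There is a genuine gap in your very first identity: the boundary term produced at $y=0$ by the diffusion has the opposite sign to the one you claim. The no-flux condition $\frak K\gamma F-\fraka\partial_y\gamma F=0$ on $\Sigma_0$ only kills the term $\varphi(0)\,(\frak K F-\fraka\partial_yF)(v,0)$ from the first integration by parts. The second integration by parts, $-\int_0^\infty\fraka\,\partial_yF\,\varphi'\,dy=\fraka\,\varphi'(0)F(v,0)+\fraka\int_0^\infty F\varphi''\,dy$, then leaves
\[
\frac{d}{dt}\int_\OO F\varphi=\int_\OO F\bigl[(\frakb-y)\varphi'+\fraka\varphi''\bigr]+\fraka\,\varphi'(0)\int_0^{v_F}\gamma F(v,0)\,dv ,
\]
with a plus sign. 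As a check, for the heat equation on $(0,\infty)$ with Neumann condition one has $\frac{d}{dt}\int yF\,dy=F(0)>0$: reflection pushes mass away from the wall. So for $\varphi'(0)>0$ this term is nonnegative. Nothing available controls the trace of $F$ on $\Sigma_0$, so it cannot be discarded.

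Your weights $\langle y\rangle$, $\langle y\rangle^2$ and $v\langle y\rangle$ all have nonzero $y$-derivative at $y=0$. Hence three steps are unjustified as written: the inequality $\frac{d}{dt}M_1\le\frakb$, the bound on $M_2$, and the weighted flux inequality for $k=2$, whose right-hand side is missing $+\fraka\int_0^{v_F}v\,\gamma F(v,0)\,dv$. Only your test with $\psi=v$, which does not depend on $y$, is correct.

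The missing idea is to use weights whose $y$-derivative vanishes at $y=0$. This is exactly why the paper uses $\phi_1$ (constant on $[0,y_F/3]$, equal to $y$ for $y\ge y_F$), $y^2$ for the second moment, and $v\phi_1$ for the weighted flux. The price is an extra term $\fraka\int F\varphi''\lesssim\fraka\le\frak C(1+\NN_F)$, which is harmless. With such weights (e.g.\ $\sqrt{1+y^2}$, $1+y^2$, $v\sqrt{1+y^2}$) the rest of your scheme goes through.

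For $k=1$, you insert $v_F\int_0^t\NN_F\le v_F+v_E\int_0^tM_1$ into $M_1(t)\le M_1(0)+C\int_0^t(1+\NN_F)$ and apply Gronwall. This is an integrated version of the paper's combination $V+\eps Y^*$ with $\eps$ of order $v_F/\frak C$. For $k=2$, no absorption by the dissipation is needed: the drift term satisfies $\frakb\int F\varphi'\lesssim(1+\NN_F)\sup_{[0,T]}M_1$, which is already integrable in time.
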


The proof draws on ideas from both the proofs of \cite[Theorem~4]{MR3177631} and \cite[Lemma~2.1]{CFS+SM}.

\begin{proof}[Proof of Proposition~\ref{prop:APB1}]

We denote
$$
V := \int_\OO v F dvdy, \quad Y_k := \int_\OO  y^k  F dvdy,  \quad Y^* := \int_\OO \phi_1 F dvdy, \quad Z :=  \int_\OO vy F dvdy,
$$
for any $k \ge 0$ and  for some $\phi_1 \in C^2(\R_+)$  such that  $\phi'_1 \ge 0$ on $\R_+$, $ \phi_1(y) = \tfrac12 y_F$ for any $ y \in [0,\tfrac13 y_F]$ and $\phi_1(y) = y$ for any $y \ge y_F$, $\phi_1(y) \ge y$ for any $y \in [0,y_F]$, 
in particular $  \phi_1 \le y + y_F$. 

\smallskip\noindent
{\it Step~1. Mass conservation.} We first compute  
\bean
{d \over dt} Y_0 
&=&
- \int_\OO   \partial_v(JF) - \int_\OO  \partial_y( \frak K F - \fraka \partial_{y} F)
\\
&=&
- \int_{\Sigma_{12}} \nu J\gamma F -  \int_{\Sigma_0} (  \frak K \gamma F - \fraka \partial_{y} \gamma F) = 0, 
\eean
by using the equation  \eqref{eq:Gal-eq-for-APB} in the first line, the Green formula in the second equality and the boundary condition in the last equality. 
That  proves the  mass conservation \eqref{eq:MassCons}, so that $Y_0(t) \equiv 1$.

\smallskip\noindent
{\it Step~2. First moment and flux.} We  similarly compute  
\bean
{d \over dt} V &=&    \int_\OO JF  - \int_{\Sigma^+_{2}} v J\gamma F
\\
&=& v_E Y_1 - Z - y_L V -  v_F\NN_F
\le    v_E Y_1 -  v_F\NN_F, 
\eean
where we have used the boundary condition in the first line, the very definitions of $J$ and $\NN_F$ in the second line and the positivity of the two terms we throw out in the third line.
We next compute 
\bean
{d \over dt} Y^* &=& \int_\OO F (\frakb - y) \phi'_1  - \int_\OO \fraka \partial_{y} F  \phi'_1 
\\
&=& \frakb \langle F \phi'_1 \rangle  -  \langle F y \phi'_1 \rangle  +  \fraka \langle F  \phi''_1 \rangle, 
\eean
where we have used the boundary condition in the first line and one more integration by part in the second line. 

From the very definition of $\phi_1$, there exist $C_1\ge 1$ such that 
$$ 
1- {\bf 1}_{y \le y_F} 
\le \phi'_1 \le C_1, \quad \phi''_1 \le C_1, 
$$
from what we deduce 
\bean 
{d \over dt} Y^* \le \frakb C_1 Y_0  + y_F Y_0 - Y_1   +  \fraka C_1 Y_0. 
\eean
As a consequence, for any $\eps > 0$, gathering the two differential estimates, we have 
\bean
{d \over dt} (V+\eps Y_1^*) 
\le
  (v_E - \eps  ) Y_1+ \eps  y_F Y_0 +\eps 2 \frak C C_1 Y_0  + (\eps 2 \frak C C_1 Y_0 - v_F)  \NN_F. 
\eean
Choosing now $\eps \in (0,1)$ small enough so that $\eps 2\frak C C_1 \le v_F /2$ and $\eps   \le v_E$, we deduce that 
\bean
{d \over dt} (V+\eps Y_1^*) + \frac {v_F}2 \NN_F  
\le  (\eps^{-1} v_E - 1) (V+\eps Y_1^*) + \eps y_F   +\eps 2 \frak C C_1 ,
\eean
and next, thanks to the Gronwall lemma, 
$$
 \sup_{[0,T]} \int F \langle y \rangle dvdy + \int_0^T  \NN_F  dt \le C e^{CT} \int F_0 \langle y \rangle dvdy, 
$$
for a constant $C = C(v_F, \eps, v_E,   \frak C, C_1)$, 
what is nothing but \eqref{eq:APB1} for $k = 1$. 

\smallskip\noindent
{\it Step~3. Second moment.}
Similarly as above, we now compute 
\bean
\frac12{d \over dt} Y_2  
&=&\int_\OO ( \frak K F - \fraka \partial_{y} F) y 
\\
&=& \frakb Y_1 - Y_2 +   \fraka Y_0, 
\eean
and thus 
$$
\frac12{d \over dt} Y_2 \le \frak C(1+\NN_F) (Y_1 + Y_0).
$$
Because the RHS term is a $L^1(0,T)$ function thanks  to  \eqref{eq:APB1} for $k = 1$, we deduce the announced control of the second moment. 

\smallskip\noindent
{\it Step~4. First moment of the flux.} 
We finally compute 
\bean
{d \over dt} \int_\OO F v \phi_1 
&=& - \int_{\Sigma_{12}} \gamma F J \nu  v \phi_1 + \int_\OO J F \phi'_1 v + \int_\OO (\frak K F - \fraka \partial_y F) \phi'_1 v
\\
&=&  - v_F \int_{\Sigma^+_2}     \gamma F  J \phi_1 + \int_\OO J F \phi'_1 v + \int_\OO (\frak b - y) F \phi'_1 v
 + \int_\OO   \fraka  F \phi''_1 v, 
\eean
using the Green formula and the boundary conditions several times. We deduce 
\bean
{d \over dt}  \int_\OO F v \phi_1 
\le      - v_F \int_{\Sigma^+_2}    \gamma F Jy  + v_E C_1 Z + (\fraka + \frakb )C_1 V, 
 \eean
 by throwing out  the negative terms, except for the boundary term. 
Using 
that $Z \le v_F Y_1$ and $(\fraka + \frakb ) V \le 2 \frak C (1+\NN_F) Y_0$ and the a priori estimate   \eqref{eq:APB1} for $k = 1$, we obtain
the announced $L^1$ control of the first moment of the flux through the boundary $\Sigma^+_2$. 
 \end{proof} 

 We next recall the entropy and Fisher information bound established in \cite[Theorem~6]{MR3177631}. Here and below, for $f : \OO \to \R_+$, 
 we denote by $\HH(f)$ the entropy and $\II(f)$ the (partial)  Fisher information defined by 
 $$
 \HH(f) := \int_\OO f \log f dydv, \quad 
 \II(f) := \int_\OO (\partial_y f)^2 f^{-1} dydv. 
 $$
 
  \begin{prop}\label{prop:APB2} 
 We assume 
 $$
 C_0 : = \int_\OO F_0 ( \langle y \rangle^2 + |\log F_0|) dvdy < \infty.
 $$
 For any $T >0$, there exists $C_T = C(T,C_0)$ such that 
\beqn\label{eq:APB2}
 \sup_{[0,T]} \int_\OO F |\log F| dydv + \int_0^T a_F \II(F) dt  +  \int_0^T\int_0^\infty   (J_+ \gamma_+ F) \log {J_- \over J_+}
  dydt  \le C_T.
\eeqn
 \end{prop}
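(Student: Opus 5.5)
The plan is the classical entropy dissipation computation. We compute $\frac{d}{dt}\HH(F)$, control the negative part of $F\log F$ by the second moment from Proposition~\ref{prop:APB1}, and treat the boundary contributions carefully.

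\textit{Entropy identity.} Multiplying \eqref{eq:Gal-eq-for-APB} (with $\fraka=a_F$, $\frakb=y_*+\cc\NN_F$) by $1+\log F$ and integrating by parts gives
$$
\frac{d}{dt}\HH(F) = -\int_\OO \partial_v(JF)(1+\log F) - \int_\OO \partial_y(K_F F - a_F\partial_y F)(1+\log F).
$$
For the $y$-part, one integration by parts, together with the no-flux condition \eqref{eq:VCktBd0} on $\Sigma_0$, kills the boundary term. We are left with
$$
\int_\OO (K_F F - a_F\partial_y F)\,\partial_y F/F = \int_\OO K_F\partial_y F - a_F\II(F),
$$
and $\int K_F \partial_y F = -\int_{\Sigma_0} K_F\gamma F + \int_\OO F$. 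Since $K_F\ge 0$ at $y=0$ (because $y_*>0$), this boundary term has a good sign, and the remainder is bounded by mass.

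For the $v$-part, we write $\partial_v(JF)(1+\log F) = \partial_v(JF\log F) + F\,\partial_v J$. Using $\partial_v J = -(y+y_L)$, this gives
$$
-\int_{\Sigma_{12}} n_v J\,\gamma F\log\gamma F + \int_\OO (y+y_L)F,
$$
and the second term is bounded by Proposition~\ref{prop:APB1}. On $\Sigma_1$ we have $\gamma F=0$, so nothing remains there.

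\textit{Boundary terms on $\Sigma_2$.} Set $G:=\gamma F(v_F,y)$ and use the reset condition \eqref{eq:VCktBd2}, which says $\gamma F(0,y)=J_+G/J_-$. Writing $\Phi=J_+G$, the boundary term becomes
$$
-\int \Phi\log G + \int \Phi\log(\Phi/J_-) = -\int_{y_F}^\infty J_+\gamma_+F\,\log\frac{J_-}{J_+}\,dy.
$$
Moving this to the left-hand side produces exactly the third term of \eqref{eq:APB2}, with a positive sign. Its integrand is nonnegative because $J_-=yv_E \ge (v_E-v_F)(y-y_F)=J_+$. Hence
$$
\frac{d}{dt}\HH(F) + a_F\II(F) + \int J_+\gamma_+F\log\frac{J_-}{J_+} \le C\Bigl(1+\int_\OO yF\Bigr),
$$
and integrating in time bounds $\HH(F(t))$ from above by $\HH(F_0)+C_T$.

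\textit{Control of $|\log F|$.} We use the standard splitting $F|\log F| = F\log F + 2F\log^-F$. To bound $F\log^- F$, split according to whether $F\le e^{-y}$ or not. On $\{F\ge e^{-y}\}$ we have $F\log^-F\le yF$. On $\{F< e^{-y}\}$ we have $F\log^-F\le C\sqrt F\le Ce^{-y/2}$, which is integrable on the bounded-in-$v$ domain $\OO$. Together with Proposition~\ref{prop:APB1} for $k=1$, this bounds $\int F\log^-F$, which closes the estimate \eqref{eq:APB2}. The same bound also ensures that the dissipation terms on the left-hand side, which are nonnegative, are integrated only against a lower-bounded entropy.

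\textit{Main obstacle.} The delicate step is the $\Sigma_2$ boundary computation. One must combine the in- and out-going trace terms through the reset condition and check that the resulting integrand $J_+\gamma_+F\log(J_-/J_+)$ is indeed nonnegative, which it is since $J_-\ge J_+$. One must also make sure that the possibly large positive term $\int(y+y_L)F$ and the $\cc\NN_F$ contributions in $K_F$ only appear in forms already controlled by Proposition~\ref{prop:APB1}: for instance, $\cc\NN_F\gamma F$ on $\Sigma_0$ has a good sign. The rigorous justification, with traces and renormalization, is deferred, as in the paper, to Sections~\ref{sec:TracePb} and~\ref{sec:existence}.
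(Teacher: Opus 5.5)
Your argument is correct and is essentially the paper's proof. It uses the same entropy identity, recombines the $\Sigma_2^\pm$ boundary terms through the reset condition into $-\int_{y_F}^\infty J_+\gamma_+F\log(J_-/J_+)\,dy$ exactly as the paper does, and uses the same splitting $F|\log F| = F\log F + 2F(\log F)_-$. There are two minor deviations. First, you integrate $\int_\OO K_F\partial_y F$ by parts once more and drop $-\int_{\Sigma_0}K_F\gamma F\le 0$; the paper instead absorbs $\frakb\int_\OO\partial_y F$ into $\frac{\fraka}{2}\II(F)$ by Young's inequality, which requires $\frakb\lesssim\fraka$ and $\int_0^T\NN_F\,dt<\infty$. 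Second, you control $F(\log F)_-$ with the first moment via the threshold $e^{-y}$, where the paper uses the second moment via $e^{-y^2}$.
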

 
 \begin{proof}[Proof of Proposition~\ref{prop:APB2}]
Denoting $H(F) = F \log F$, we first observe that 
\bean
&& \partial_t H(F) + \partial_v(JH(F)) + \partial_y ( (\frak K F  - \frak a \partial_y F) H'(F))   
\\
&&\qquad =      (\partial_vJ) (H(F) - F H'(F)) +  (\frak K F  - \frak a \partial_y F) \partial_y H'(F)
\\
&&\qquad =     -  (\partial_vJ) F +  \frak K \partial_y F  - \frak a (\partial_y F  )^2 F^{-1},  
\eean 
 so that 
 \bean
 {d \over dt} \int_\OO H(F) 
 &=&  - \int_{\Sigma_{12}} J \nu H(\gamma F)   -  \int_\OO (\partial_vJ) F + \int_\OO\frak K \partial_y F -  \frak a  \II(F) 
\\
 &=&  - \int_{\Sigma_{12}} J \nu H(\gamma F)   +  Y_1 + (1+y_L) Y_0 + \frak b \int_\OO  \partial_y F -  \frak a  \II(F), 
 \eean
 where we have used the Green formula and the boundary conditions  \eqref{eq:VCktBd1} and \eqref{eq:VCktBd1-linear}.
  For the boundary term, we observe that thanks to the boundary condition   \eqref{eq:VCktBd2}, we have 
\bean
 \int_{\Sigma_{12}} J \nu H(\gamma F) 
 &=& \int_{\Sigma_2^+} J \gamma F (\log J\gamma F - \log J) - \int_{\Sigma_2^-} J \gamma F (\log J\gamma F - \log J) 
\\
&=& \int_{y_F}^\infty J_+ \gamma_+ F (\log J_-  - \log J_+).
\eean
For the penultimate term, we write 
$$
 \frak b \int_\OO  \partial_y F \le  \frak b \eps \II(F) +  \frak b \eps^{-1} Y_0,
 $$
 thanks to the Young inequality. Choosing $\eps > 0$ small enough such that $ \frak b \eps \le  \fraka/2$, we deduce 
 \bean
 {d \over dt} \int_\OO H(F) +  \int_{y_F}^\infty J_+ \gamma_+ F \log {J_-  \over J_+} + \frac \fraka 2   \II(F)
 \le    (1+y_L) Y_0 +   \eps^{-1} \frak C (1+\NN_F), 
 \eean
 and from the  estimates yet established in Proposition~\ref{prop:APB1}, we conclude that 
 $$
 \int_\OO H(F(t,\cdot)) +  \int_0^t \int_{y_F}^\infty J_+ \gamma_+ F \log {J_-  \over J_+} +  \int_0^t\frac \fraka 2   \II(F)
\le C_T, 
$$
for any $t \in (0,T)$. On the other hand, from 
the elementary inequality 
$
s(\log s)_- \le \sqrt{s} {\bf 1}_{0 \le s \le e^{-y^2}} + s y^2  {\bf 1}_{e^{-y^2} < s \le 1} , 
$ for any $s,y > 0$, we classically deduce 
\bean
 \int_\OO F |\log F| 
 &=& \int_\OO H(F) + 2  \int_\OO F (\log F)_-
 \\
 &\le& \int_\OO H(F) + \sqrt{2   \pi} v_F + 2 Y_2. 
 \eean
We immediately  conclude to \eqref{eq:APB2} from the two last estimates together with the second moment estimate established in Proposition~\ref{prop:APB1}. 
\end{proof}

 We establish a last estimate on the trace function.  
 
  \begin{prop}\label{prop:APB3} 
  Under the assumptions of Proposition~\ref{prop:APB2}, for any $T >0$, there exist a function $\chi_0 \in C^2(\bar\UUU)$, such that $\chi_0 > 0$ on $\Gamma_+$, and a positive constant $C_{T}$ such that 
\beqn\label{eq:APB3}
  \int_0^T\int_0^\infty  \chi_0 J_+ \gamma_+ F \log \gamma_+ F
  dydt  \le C_{T}. 
\eeqn
 \end{prop}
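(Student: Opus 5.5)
We would prove \eqref{eq:APB3} by a renormalization argument with a localized test function, in the spirit of Proposition~\ref{prop:APB2}. Instead of integrating $H(F)=F\log F$ against the constant function $1$, we integrate it against a weight $\chi_0=\chi_0(v,y)$ that isolates the outgoing boundary $\Sigma_2^+$ and kills the incoming one.

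We choose $\chi_0(v,y)=\varphi(v)\psi(y)$ with the following properties:
\begin{itemize}
\item $\varphi\in C^2([0,v_F])$, $\varphi\ge0$, $\varphi(0)=0$, $\varphi(v_F)=1$ and $\varphi'\ge0$ (for instance $\varphi(v)=v/v_F$);
\item $\psi\in C^2$, $0\le\psi\le1$, with $\psi=0$ on $[0,y_F]$, $\psi>0$ on $(y_F,\infty)$, $\psi$ bounded together with $\psi'$ and $\psi''$.
\end{itemize}
Then $\chi_0>0$ on $\Gamma_+$, which is the meaning of the statement. We compute $\frac{d}{dt}\int_\OO H(F)\chi_0$ exactly as in the proof of Proposition~\ref{prop:APB2}, keeping track of the extra terms produced by $\nabla\chi_0$.

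\textbf{Boundary terms.} The contributions on $\Sigma_1$ and $\Sigma_2^-$ vanish because $\psi=0$ on $[0,y_F]$ and $\varphi(0)=0$. On $\Sigma_0$ the no-flux condition \eqref{eq:VCktBd1-linear} leaves only a term of the form $\fraka\int_{\Sigma_0}(\ldots)\partial_y\chi_0$. We avoid it by taking $\psi\equiv0$ near $y=0$, which is already the case. What remains is exactly
$$
-\int_{y_F}^\infty \psi J_+\gamma_+F\log\gamma_+F,
$$
which is the quantity we want to bound. Because the $\Sigma_2^-$ term is gone, we no longer need to pair it with $\log J_-$ as in Proposition~\ref{prop:APB2}.

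\textbf{Interior terms.} The identity
$$
\partial_t H+\partial_v(JH)+\partial_y\bigl((\frak K F-\fraka\partial_yF)H'\bigr)=-(\partial_vJ)F+\frak K\partial_yF-\fraka(\partial_yF)^2F^{-1}
$$
is multiplied by $\chi_0$ and integrated. This gives:
\begin{itemize}
\item $\int JH(F)\partial_v\chi_0$, bounded by $C\int(1+y)F|\log F|$;
\item $\int(\frak K F-\fraka\partial_yF)(1+\log F)\partial_y\chi_0$;
\item the terms already present in Proposition~\ref{prop:APB2}, which are handled by Young's inequality against $\fraka\II(F)$.
\end{itemize}
For the term with $\partial_yF\log F$, we integrate by parts in $y$ to move the derivative onto $\chi_0$, getting $\fraka\int (F\log F-F)\partial_{yy}\chi_0$. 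This is bounded by $C\fraka\int F|\log F|$. Since $\fraka\lesssim1+\NN_F\in L^1(0,T)$ and $\sup_t\int F|\log F|\le C_T$ by \eqref{eq:APB2}, this product is integrable in time. The term $\int \frak K F\log F\,\partial_y\chi_0$ is treated the same way, using $\frak b\lesssim 1+\NN_F$.

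\textbf{Main obstacle.} The difficult term is $\int J\,F|\log F|\,\partial_v\chi_0$ together with the $yF\log F$ contributions: they require a bound on $\int yF|\log F|$, whereas Proposition~\ref{prop:APB2} only controls $\int F|\log F|$ and $\int y^2F$. For the negative part $F(\log F)_-$ we use the same elementary inequality as in the proof of Proposition~\ref{prop:APB2}, with weight $y$, which gives a bound by a constant plus $CY_2$. For the positive part, the simplest fix is to choose $\psi$ with compact support in $(y_F,\infty)$. Then $\partial_v\chi_0$ and $\partial_y\chi_0$ vanish for large $y$, so $J$ and $y$ are bounded on their support and only $\int F|\log F|$ is needed. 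Positivity of $\chi_0$ on all of $\Gamma_+$ is recovered by a smooth decaying weight of the form $\psi(y)=\sum_k 2^{-k}\psi_k(y)/(1+C_k)$, where each $\psi_k$ has compact support, the supports cover $(y_F,\infty)$, and $C_k$ is the constant obtained for $\psi_k$. Summing the estimates then yields \eqref{eq:APB3}.
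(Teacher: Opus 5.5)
Your argument is correct and is essentially the paper's proof: the entropy identity is tested against a weight $\chi_0$ that vanishes near $\Sigma_0$, on $\Sigma_1$ and on $\Sigma_2^-$; the weighted Fisher term is dropped or absorbed; and everything else is bounded by $\sup_{[0,T]}\int_\OO F(1+|\log F|)\,\int_0^T(1+\NN_F)\,dt$. The only difference is how you tame the linear growth in $y$ of $J$, $\partial_vJ$ and $\frak K$. The paper takes a single $\chi_0$ with $\chi_0+|\partial_v\chi_0|+|\partial_y\chi_0|+|\partial^2_{yy}\chi_0|\le 1/y$, which removes your ``main obstacle'' in one stroke. Your sum of compactly supported pieces also works, provided the coefficients are chosen small enough that the series converges in $C^2$ and $\int J_+\chi_0\,(\gamma_+F\log\gamma_+F)_-$ stays finite, so that the summed inequalities can be recombined into one.
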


\begin{proof}[Proof of Proposition~\ref{prop:APB3}] 
 With the notations of  Proposition~\ref{prop:APB2}, we observe that  
\bean
&& \partial_t H(F) + \partial_v(JH(F)) + \partial_y ( \frak K H(F)) -   \frak a \partial^2_{yy} H(F) + \frak a (\partial_{y} F)^2  H''(F)   
 \\
&&\qquad =   -  (\partial_vJ + \partial_y  \frak K ) F . 
\eean 
For any  $T > 0$ and $0 \le \chi_0 \in C^2([0,T] \times [0,v_F] \times [0,\infty))$ with supp$\chi_0 = [0,T] \times [v_F/2,v_F] \times [y_F,\infty)$,  we deduce 
\bean
&&\int_{\Gamma^+_2} J   \chi_0 H(\gamma F)   + \int_\UUU \fraka (\partial_y F)^2 H''(F) \chi_0 
=   - \int_\UUU (\partial_v J + \partial_y \frak K) F \chi_0 
\\
&&
\quad +\int_\UUU H(F) ( \partial_t \chi_0  + J \partial_v\chi_0 + \frak K  \partial_y \chi_0+  \fraka \partial_{yy}^2 \chi_0) . 
\eean
 We now fix $\chi_0$ such that $\chi_0 + |\partial_t \chi_0| + |\partial_v \chi_0| + |\partial_y \chi_0| + |\partial_{yy} \chi_0|  \le 1/y$ on $\UUU$, so that 
$$
|\partial_t \chi_0| + |J \partial_v \chi_0| +  \chi_0 | \partial_v J| + \chi_0 |\partial_y \frak K| \lesssim 1
$$
and 
$$
  |\frak K \partial_y \chi_0| +  \fraka | \partial_{yy}^2 \chi_0 |   \lesssim 1 + \NN_F. 
$$
We obtain 
\bean
\int_{\Gamma^+_2} J_+ \chi_0 H(\gamma_+ F)  
&\lesssim& 
  \int_\UUU F (1 + |\log F|) (1 + \NN_F) 
\\
&\lesssim& \sup_{[0,T]} \int_\OO F (1 + |\log F|)  \int_0^T (1 + \NN_F)  \le C_T, 
\eean
thanks to Proposition~\ref{prop:APB1} and Proposition~\ref{prop:APB2}. 
 \end{proof}

We present two important consequences of the above a priori estimates. We  first claim that both the solution and its trace on the outgoing boundary belong to weakly compact sets. 
That will be fundamental when we will deal with the stability and the existence results in the next sections. 

 \begin{cor}\label{cor:compactness}
  (1) The set  
 $$
 \CCC_\UUU := \Bigl\{ 0 \le F \in L^1(\UUU); \sup_{[0,T]} \int_\OO F ( \langle y \rangle  + |\log F|) dydv \le C_T \Bigr\} 
  $$
  is weakly compact in $L^1(\UUU,dydvdt)$. 
 
(2) The set  
 $$
 \CCC_\Gamma := \Bigl\{ 0 \le g \in L^1(\Gamma_{2}^{+});  
 \int_0^T \!\! \int_{y_F}^\infty J_+ g \Bigl(  \langle y \rangle  + \log {J_- \over J_+} +  \chi_0 \log g  \Bigr) dy dt
  \le C_T \Bigr\} 
  $$
  is weakly compact in $L^1(\Gamma_+,J_+dydt)$. 
 \end{cor}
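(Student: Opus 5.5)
Both parts rest on the Dunford--Pettis theorem: a bounded subset of $L^1$ on a $\sigma$-finite measure space is relatively weakly compact if and only if it is equi-integrable, i.e. it is tight and uniformly absolutely continuous. Given this criterion, it then remains to check weak closedness of the sets.

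\textbf{Part (1).} On $\UUU = (0,T)\times\OO$ each $F \in \CCC_\UUU$ satisfies $\|F\|_{L^1(\UUU)} \le T C_T$. Tightness in $y$ follows from the moment bound:
$$\int_0^T\!\!\int_{y>R} F \le T C_T/R.$$
The domain is bounded in $t$ and $v$, so no further tightness is needed. For uniform integrability I use the superlinear control $F|\log F|$ through the standard splitting. For a measurable $A \subset \UUU$ and $M>1$,
$$\int_A F \le M|A| + \int_{F>M} F \le M|A| + \frac{1}{\log M}\int_\UUU F|\log F| \le M|A| + \frac{T C_T}{\log M}.$$
Choosing $M$ large and then $|A|$ small gives uniform absolute continuity. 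For closedness, note that the functional
$$F \mapsto \int_\OO F(\langle y\rangle + |\log F|)$$
is not convex, because $s|\log s|$ is not convex. So I will rewrite $F|\log F| = F\log F + 2F(\log F)_-$ and handle the concave part through the moment bound, as in the proof of Proposition~\ref{prop:APB2}. More pragmatically, what is used later is only relative weak compactness, together with the fact that weak limits satisfy the same kind of bound, possibly with a modified constant. That follows from the lower semicontinuity of the convex functional $\int F\log F$ and of $\int F\langle y\rangle$. Along the way I will also check that the time-pointwise $\sup_t$ bound passes to the limit when tested against functions $\varphi(t)\ge 0$.

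\textbf{Part (2).} The measure is $d\mu = J_+\,dy\,dt$ on $(0,T)\times(y_F,\infty)$. It is $\sigma$-finite and the weight $J_+$ vanishes at $y_F$. The $L^1(\mu)$ bound and tightness at $y\to\infty$ come from the $\langle y\rangle$ term, exactly as in Part (1). The two logarithmic terms give uniform integrability, and this is the delicate step, because $\log(J_-/J_+)$ blows up only near $y=y_F$ and $\chi_0$ degenerates only at infinity. I would split into three regions.
\begin{itemize}
\item Near $y_F$, that is $y\in(y_F,y_F+\delta)$: here $\log(J_-/J_+)\ge \log(c/\delta)\to\infty$, which makes the $\mu$-mass of $g$ uniformly small. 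This term is nonnegative on this region.
\item Near infinity: the $\langle y\rangle$ weight makes the contribution uniformly small.
\item On the intermediate compact region $[y_F+\delta,R]$: $\chi_0\ge c_{\delta,R}>0$, so the bound on $\int J_+\chi_0\, g\log g$ yields the de la Vallée-Poussin criterion there.
\end{itemize}
One must control the negative parts. The term $g(\log g)_-$ is bounded by $\sqrt g + g y^2$, or simply by $e^{-1}$ on sets of finite $\mu$-measure. The term $\log(J_-/J_+)$ is negative only for large $y$, where it stays bounded since $J_-/J_+\to v_E/(v_E-v_F)$, so it is absorbed by the $\langle y\rangle$ term.

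Finally, closedness under weak $L^1(\mu)$ convergence follows because the defining functional is convex in $g$ (the map $g\mapsto g\log g$ is convex and all weights are nonnegative where relevant), so it is weakly lower semicontinuous after a truncation and monotone-convergence argument.
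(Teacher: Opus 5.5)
Your proposal is correct and follows the paper's proof. It uses Dunford--Pettis throughout: Part~(1), which the paper skips as classical, is done by the standard moment/entropy splitting, and Part~(2) is cut into exactly the paper's three regions, namely $(y_F,y_F+\eps)$ controlled by $\log(J_-/J_+)$, $(R,\infty)$ controlled by $\langle y\rangle$, and $[y_F+\eps,R]$ controlled by $\chi_0\ge\eta_{\eps,R}>0$. Your remark that Part~(1) only gives relative weak compactness, with limits satisfying a similar bound recovered through the $F(\log F)_-$ moment trick, is accurate and matches how the paper uses the result.

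One correction. Since $J_+=(v_E-v_F)(y-y_F)<v_E\,y=J_-$, one has $\log(J_-/J_+)>\log\bigl(v_E/(v_E-v_F)\bigr)>0$ on all of $(y_F,\infty)$, so that term never needs absorbing. The only negative part is $\chi_0 J_+ g(\log g)_-$. It is handled cleanly by the pointwise bound $\tfrac12 J_+\langle y\rangle g+\chi_0 J_+ g\log g\ge-\chi_0 J_+ e^{-1-\langle y\rangle/(2\chi_0)}$, whose right-hand side is integrable. The bounds you quote are not enough as written: $\sqrt g+gy^2$ can use up the whole $\langle y\rangle$ term you need for tightness, and $e^{-1}$ is not integrable against $\chi_0 J_+\,dydt$ unless $\chi_0$ decays fast.
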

 
 \begin{proof}[Proof of Corollary~\ref{cor:compactness}] The point (1) is very classical and its proof is thus skipped, see e.g. \cite{MR1014927}. We   just explain how to establish (2). 
We  have 
 $$
 \int_0^T\int_{y_F}^{y_F+\eps} J_+ g dydt \le \eta_1(\eps) C_T, \quad \eta_1(\eps) :=  \Bigl( \log {J_- (y_F) \over J_+(y_F +\eps)} \Bigr)^{-1} \to 0 \ \hbox{ as } \ \eps \to 0, 
 $$
 because $J_+ (y)  = (y-y_F) (v_E-v_F) \le J_+(y_F + \eps) \le J_-(y_F ) \le  J_-(y) = y v_E$ for any $y \in [y_F,y_F+\eps]$ and $\eps > 0$ small enough, and 
 $$
 \int_0^T\int_R^\infty  J_+ g dydt \le \eta_2(R)   C_T, \quad \eta_2 (R) := \langle R \rangle^{-1}  \to 0 \ \hbox{ as } \ R \to \infty. 
 $$
 On the other hand, we observe that for any $0 < \eps < 1 < y_F+1 < R < \infty$, there exists $\eta_{\eps,R} > 0$ such that 
$$
\chi_0 (t,v_F,y) \ge \eta_{\eps,R} \quad \forall \, t \in [0,T], \ \forall \, y \in [y_F+\eps,R], 
$$
and thus 
$$
\eta_{\eps,R}  \int_0^T\int_{y_F+\eps}^R    J_+ g \log g   dy dt  \le C_T.
$$
We conclude thanks to the Dunford-Pettis Lemma. 
 \end{proof}

The second consequence is that the equation \eqref{eq:VCk} makes sense for functions satisfying the above natural bounds. Multiplying  indeed a solution $F$ to  the equation \eqref{eq:VCk} by a test function 
 $\psi \in C^1_c(\bar \UUU)$ and integrating in the three variables, we formally obtain after integration by parts
\bear
\label{eq:defSol-AB}
\int_\UUU F [ \partial_t \psi + J  \partial_v  \psi + K_F  \partial_{y} \psi] +   \int_\OO F_0  \psi(0,\cdot) =  \int_\UUU a_F \partial_y F \partial_y \psi +  \int_{\Gamma_2} \nu J \gamma F \psi,  
\eear
where the contributions on the boundary sets $\Gamma_1$ and  $\Gamma_0$ have disappeared because of the boundary conditions \eqref{eq:VCktBd0} and \eqref{eq:VCktBd1}. 
 We will explain the meaning of the last (trace) term in the next section. Here we consider the other terms, and more precisely we explain why the two nonlinear terms are well defined when $F$
 satisfies the estimates \eqref{eq:APB1} and \eqref{eq:APB2}.
 
 For the third term in \eqref{eq:defSol-AB}, we write 
$$
\int_\UUU F  K_F  \partial_{y} \psi = \int_\UUU F  (y_* - y)  \partial_{y} \psi + \frak c \int_0^T \NN_F \Bigl( \int_\OO F \partial_y \psi \Bigr) dt, 
$$
where the last term is clearly the product of a $L^1(0,T)$ function and a $L^\infty(0,T)$ function, because of \eqref{eq:APB1} and the very definition \eqref{eq:defNNF}. 
For the fifth term in \eqref{eq:defSol-AB}, we similarly first write 
\beqn\label{eq:def-Psi}
\int_\UUU a_F \partial_y F \partial_y \psi =   \int_0^T a_F  \Psi dt, \quad \Psi :=  \int_\OO \partial_y F \partial_y \psi dydv, 
\eeqn
and we claim that $\Psi  \in L^2(0,T)$. We indeed have 
\bean
 \int_0^T    \Psi^2 dt 
&\le&  \int_0^T   \Bigl( \int_\OO |\partial_y F|  dydv\Bigr)^2   dt \|\partial_y \psi \|_{L^\infty}^2
\\
&\le& \sup_{[0,T]} \int_\OO F dydv  \int_\UUU   {|\partial_y F|^2 \over F}  dydvdt  \|\partial_y \psi \|_{L^\infty}^2 , 
\eean
where we have used the Cauchy-Schwarz inequality in the second line. The RHS term is finite because of the first bound in \eqref{eq:APB1} and the second bound in \eqref{eq:APB2}. 
Now, we write 
$$
a_F  \Psi = a_F \Psi {\bf 1}_{|\Psi| \le 1}   + a_F \Psi {\bf 1}_{|\Psi| > 1}. 
$$
The first term at the RHS is  the product of a $L^1(0,T)$ function and a $L^\infty(0,T)$ function, so that its integral is well-defined. For the second term, arguing as above, we have  
\bean
 \int_0^T  a_F |\Psi| {\bf 1}_{|\Psi| > 1} dt 
&\le&  \int_0^T  a_F \Psi^2  dt 
\\
&\le& \sup_{[0,T]} \int_\OO F dydv  \int_\UUU  a_F  {|\partial_y F|^2 \over F}   \|\partial_y \psi \|_{L^\infty}^2 , 
\eean
which is finite because again of  \eqref{eq:APB1} and  \eqref{eq:APB2}. As a consequence, the function $a_F \Psi {\bf 1}_{|\Psi| > 1}$ belongs to $L^1(0,T)$. 
That ends the proof of the fact that \eqref{eq:def-Psi} is meaningful when $F$ satisfies the natural bounds   \eqref{eq:APB1} and \eqref{eq:APB2}.

  \medskip
\section{The trace problem}

\label{sec:TracePb}

In this section we present some trace results adapted to our problem. These ones are in the spirit of the trace results established in \cite{MR2721875} (see also \cite{MR1765137,MR1776840}) and in \cite{CFS+SM} (see also \cite{CM-Landau**,CGMM**}). 
We start considering the trace problem for solutions to the equation 
\beqn\label{eq:trace1}
\partial_t g + J \partial_v g =  G_0 + \partial_y G_1 
\quad\text{in}\quad \UUU := (0,T) \times \OO.
\eeqn
For further discussion we define the boundary set $\Sigma_{12,R} :=   \{ 0 , v_F\} \times (R^{-1}, R)$ 
for $0 < R <1$ and similarly  $\Gamma_{12,R}$.  We define then the Lebesgue space $\Lloc^2(\Gamma_{12}; \xi dydt) := \bigcap_R L^2(\Gamma_{12,R}; \xi dydt) $  for a weight function $\xi : \Gamma_{12} \to \R_+$.

\begin{theo}\label{theo:trace1} Let us consider  $g \in L^\infty(\UUU)$ such that $\fraka^{1/2} \partial_y g \in L^2(\UUU)$, $a_* \le \fraka \in L^1(0,T)$,  which satisfies \eqref{eq:trace1} in the distributional sense
with 
$G_0, G_1 \in \Lloc^1(\bar\UUU)$ and $ \fraka^{-1/2} G_1   \in \Lloc^2(\bar\UUU)$. 
There exist then a function $\gamma g \in L^\infty(\Gamma_{12})$ and, for any $t \in [0,T]$,  a function $\gamma_t g \in L^\infty(\OO)$ which are the traces of $g$ on the boundary $\Gamma_{12}$ and on the sections $\{t \} \times \OO$, in the sense that the following  Green  renormalized formula
\bear\label{eq:traceL}
&&
  \int_{0}^t \!\! \int_{\Sigma_{12}} \beta(\gamma \, g) \, \varphi \, J \nu \, dyds
+ \left[ \int_\OO \beta(\gamma_s g)  \varphi(s,\cdot) \,  dvdy\right]_0^t
\\
\nonumber
&&= \int_{\UUU} \left\{  \beta(g) \, ( \partial_t  \varphi + \partial_v (J \varphi))  + G_0 \varphi \beta'(g)  - G_1 \partial_y(\varphi \beta'(g) )  \right\}  dydvdt
\eear
holds for any renormalizing function $\beta \in C^2(\R)$ and any test function  $\varphi \in  \DD(\UUU \cup \Gamma_{12})$. Furthermore $t \mapsto \gamma_t g \in C([0,T];\Lloc^1(\OO))$, $g(t,\cdot) = \gamma_t g$ a.e. on $\OO$, for a.e. $t \in (0,T)$   and $\gamma g \ge 0$ on $\Gamma_{12}$ if $g \ge 0$ on $\UUU$. 
\end{theo}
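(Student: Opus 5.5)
The plan is to mollify only in the $y$ variable (and, near a fixed portion of the boundary, in $t$), keeping the transport in $(t,v)$ exact. That way we reduce to the classical trace theory for the one-dimensional transport operator $\partial_t + J\partial_v$ along characteristics, as in \cite{MR2721875}, and then pass to the limit in the regularization.

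Let $\rho_\eps$ be a mollifier in $y$, and set $g_\eps := g *_y \rho_\eps$, with $g$ extended suitably near $y=0$. Since we only need test functions supported in $\UUU\cup\Gamma_{12}$, the region $y$ near $0$ is irrelevant locally. Because $J$ depends on $y$, the mollification produces the equation
\[
\partial_t g_\eps + J\partial_v g_\eps = G_{0}*\rho_\eps + \partial_y (G_1*\rho_\eps) + r_\eps,\qquad r_\eps := J\partial_v g_\eps - (J\partial_v g)*\rho_\eps .
\]
We control $r_\eps$ by a DiPerna--Lions commutator lemma. Since $J$ is affine in $y$, the commutator reduces to terms of the form $(\partial_y J)\,[\,y\text{-moment of }\rho_\eps\,]\,\partial_v g$. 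It is better to rewrite it using the equation, so that it reads $\eps\,\partial_v(\cdot)$ applied to bounded functions and tends to $0$ in $\DD'$, or in $L^1_{loc}$ after combining with $\partial_y$ regularity. If this route is too clumsy, I would use the alternative of mollifying along the flow of $J$ directly.

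For fixed $\eps$, $g_\eps$ is smooth in $y$, and for a.e. $y$ it solves a transport equation in $(t,v)$ whose right-hand side lies in $L^1_{loc}$. We therefore have classical traces on $\Gamma_{12}$ and on the time sections, together with the exact chain rule
\[
(\partial_t + J\partial_v)\beta(g_\eps) = \beta'(g_\eps)\,(\text{RHS}).
\]
Integrating against $\varphi$ gives \eqref{eq:traceL} for $g_\eps$. In that identity, the term $\beta'(g_\eps)\partial_y(G_1*\rho_\eps)\varphi$ is integrated by parts in $y$ to produce $-G_{1,\eps}\partial_y(\varphi\beta'(g_\eps))$. This is where the hypotheses $\fraka^{1/2}\partial_y g\in L^2$ and $\fraka^{-1/2}G_1\in \Lloc^2$ enter: the product $G_{1,\eps}\beta''(g_\eps)\partial_y g_\eps$ is bounded in $L^1_{loc}$ by Cauchy--Schwarz, with the weight $\fraka(t)$ split between the two factors. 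Moreover, $\fraka^{-1/2}G_{1,\eps}\to \fraka^{-1/2}G_1$ and $\fraka^{1/2}\partial_y g_\eps\to \fraka^{1/2}\partial_y g$ strongly in $L^2_{loc}$. Here it is crucial that $\fraka$ depends only on $t$, so it commutes with $y$-convolution.

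The main obstacle is passing to the limit in the boundary and section terms, i.e. identifying $\gamma g$ and $\gamma_t g$. Since $|g_\eps|\le\|g\|_\infty$, the traces $\gamma g_\eps$ are bounded in $L^\infty$, so a subsequence converges weak-$*$ to some $\gamma g$. The difficulty is that weak limits do not commute with nonlinear $\beta$. To handle it, I would show that the traces form a Cauchy sequence in $L^1_{loc}(\Gamma_{12}, |J|dydt)$. To do so, apply the formula with $\beta(s)=s^2$ to the difference $g_\eps-g_{\eps'}$, which solves the same type of equation with data tending to $0$. The right-hand side tends to $0$, so $\int|J|\,|\gamma g_\eps-\gamma g_{\eps'}|^2\varphi\to0$. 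This works on the outgoing part $\Sigma^+_{12}$ directly. On the incoming part, we use a test function $\varphi$ localized near the boundary with the opposite sign of $J\nu$, and we also control the time sections $\sup_t\int|\gamma_t(g_\eps-g_{\eps'})|^2\varphi$. Near $y=y_F$, where $J$ vanishes on $\{v_F\}$, the convergence holds only with weight $|J|$, which is enough, since traces only appear multiplied by $J\nu$. Strong convergence then gives \eqref{eq:traceL} for all $\beta$. It also gives $\gamma_t g\in C([0,T];\Lloc^1)$, because it is a uniform limit of continuous functions, and $g(t)=\gamma_tg$ a.e. The positivity of $\gamma g$ follows from that of $\gamma g_\eps$, since $g_\eps\ge0$ whenever $g\ge0$.
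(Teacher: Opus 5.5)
Your proposal has a genuine gap at its first step, the regularization, and everything afterwards rests on it. The operator $\partial_t+J\partial_v$ differentiates in $v$ with a speed that depends on $y$. A convolution in $y$ alone therefore does not commute with it up to an $\Lloc^1$ error. Since $J(v,y)-J(v,y-z)=z(v_E-v)$, your remainder is exactly
\[
r_\eps=(v_E-v)\,\partial_v\bigl(g*_y(z\rho_\eps)\bigr), \qquad \bigl\|g*_y(z\rho_\eps)\bigr\|_{L^\infty}\le\eps\|g\|_{L^\infty},
\]
which is an $O(\eps)$ first-order distribution in $v$. The hypotheses give no bound on $\partial_v g$; the assumption on $\fraka^{1/2}\partial_y g$ is purely in $y$. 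So $r_\eps$ tends to $0$ in $\DD'$, but there is no reason for it to be a function, let alone to vanish in $\Lloc^1$.

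Convergence in $\DD'$ is useless here. The remainder must be multiplied by $\beta'(g_\eps)$ in the chain rule, and by $g_\eps-g_{\eps'}$ in your Cauchy estimate with $\beta(s)=s^2$. But $g_\eps$ has no more regularity in $v$ than $g$. Hence your claim that, for fixed $\eps$ and a.e.\ $y$, $g_\eps$ solves a $(t,v)$-transport equation with $\Lloc^1$ right-hand side is unjustified. So are the classical traces of $g_\eps$, the exact chain rule, and the time continuity you use later.

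Your fallbacks do not repair this.
\begin{itemize}
\item Using the equation to replace $J\partial_v g$ by $-\partial_t g+G_0+\partial_yG_1$ only trades an uncontrolled $v$-derivative for an uncontrolled $t$-derivative. It also introduces $J^{-1}$, which is singular on the interior curve $\{J=0\}$ and at $(v_F,y_F)$.
\item Mollifying also in $t$ destroys the commutation with $\fraka(t)$ that you rightly call crucial. For a weight merely in $L^1(0,T)$, the $\Lloc^2$ convergence of $\fraka^{1/2}\partial_y g_\eps$ and $\fraka^{-1/2}G_{1,\eps}$ is then no longer guaranteed.
\item ``Mollifying along the flow of $J$'' is left unspecified, and it is precisely the missing step.
\end{itemize}

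The missing idea is that the DiPerna--Lions commutator lemma requires mollifying in the transport direction $v$. The paper mollifies jointly in $(v,y)$. It handles the boundary $v\in\{0,v_F\}$, which your $y$-only choice was trying to avoid, by the inward-shifted convolution
\[
h\circledast\rho_\eps(v,y)=\int_{\R^2}h\bigl(v-v_*-2\eps\nu(v),\,y-\eps-y_*\bigr)\,\rho_\eps(v_*,y_*)\,dv_*dy_*,
\]
combined with the commutator lemma up to the boundary \cite[Lemma~1]{MR1765137}. This gives:
\begin{itemize}
\item $r_\eps\to0$ in $\Lloc^1(\bar\UUU)$, with $(r_\eps)$ bounded in $\Lloc^p(\bar\UUU)$;
\item $g_\eps\in W^{1,1}(0,T;\Wloc^{1,\infty}(\bar\OO))\subset C(\bar\UUU)$, so the traces of $g_\eps$ are plain restrictions;
\item the weighted $\Lloc^2$ convergences, since there is no time mollification and $\fraka$ depends only on $t$.
\end{itemize}

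From there your second half works essentially as in the paper. That covers the Cauchy estimate for $g_\eps-g_{\eps'}$, the sup-in-time control giving convergence in $C([0,T];\Lloc^2(\OO))$, positivity, and the passage to the limit in the renormalized identity. One simplification: testing with $\varphi=J\nu\chi$ gives $\int(\gamma g_\eps-\gamma g_{\eps'})^2J^2\chi$ on all of $\Gamma_{12}$ at once. No separate treatment of incoming and outgoing parts is needed. The $L^\infty$ bound then upgrades this to $\Lloc^p(dydt)$, because $J$ vanishes on $\Sigma_{12}$ only at $(v_F,y_F)$.
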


\noindent
Let us make several comments. 

\smallskip\noindent\ 
(i) In some sense, the result extends \cite[Theorem~2]{MR1765137} where in the last reference the RHS source term  in \eqref{eq:trace1} is assumed to be a $L^1$ function.  

\smallskip\noindent\ 
(ii) The formula \eqref{eq:traceL} is meaningful because of the assumptions made on $g$, in particular we have  $\beta(g), \beta'(g), \beta''(g) \in L^\infty(\UUU)$ and 
$$
|G_1 \partial_y \beta'(g) | = |G_1 \beta''(g) \partial_y g| \lesssim ( \fraka^{-1/2} G_1) ( \fraka^{1/2} \partial_y g)  \in \Lloc^1(\bar\UUU), 
$$ 
as a product of two  functions of $\Lloc^2(\bar\UUU)$. 

\smallskip\noindent\ 
(iii)  We will often write indifferently $g(t,\cdot) = \gamma_t g $. 

\smallskip\noindent\ 
(iv) The trace functions are defined as the limit of the restrictions to the subsets  $\Gamma_{12}$ and $\{t \} \times \OO$ of a sequence of  functions $g_\eps \in C(\bar\UUU)$ which will be defined by  $g_\eps := g \circledast \rho_\eps$ for a convenient convolution product $\circledast$ and a sequence of mollifier $(\rho_\eps)$, see \eqref{eq:def-gt} and  \eqref{eq:def-gammag} below.  

\smallskip\noindent\ 
(v) At least when $\frak a \equiv 1$, an alternative and equivalent way to formulate our trace result is to introduce the Banach space 
$$
\XXX := \{ g \in L^\infty (\UUU); \ \partial_y g \in L^2_{tvy}, \ \partial_t  g+ J \partial_v g \in L^1_{tvy} + L^2_{tv}H^{-1}_y \}
$$
and to say that $C_c (\bar\UUU) \subset \XXX$ is dense and that the restriction mapping $g \mapsto g_{|\Gamma_{12}}$ defined on $C_c (\bar\UUU)$ can be extended by continuity to any function $g \in \XXX$. 

\smallskip
\begin{proof}[Sketch of proof of Theorem~\ref{theo:trace1}]
As already mentioned, the proof is very similar to the ones of  \cite[Theorem~2]{MR1765137} and  \cite[Theorem~4.2]{MR2721875}. We refer to these papers for more details. 

\smallskip\noindent
{\sl Step 1. Some a priori estimates.} We fix $\nu \in W^{1,\infty}(0,v_F) $ such that $\nu (0) = -1$,  $\nu (v_F) = 1$.   We multiply the equation \eqref{eq:trace1} by $g J \nu \chi$, $0 \le \chi \in C_c^1((0,\infty))$, and after integration in all the variables, we (at least formally) obtain 
\bean
&&
  \int_{t_0}^{t_1} \!\! \int_{\Sigma_{12}} (\gamma \, g)^2 \,  J^2 \, \chi \, dyds
+ \left[ \int_\OO g(s,\cdot)^2  J \nu \chi \,  dvdy\right]_{t_0}^{t_1}
\\
\nonumber
&&= \int_{t_0}^{t_1} \!\! \int_\OO \left\{  g^2  \partial_v (J^2 \nu) \chi    + 2 G_0 g J \nu \chi  - 2 G_1 \partial_y(g J \nu \chi  )  \right\}  dydvdt, 
\eean
for any   $0 \le t_0 < t_1 \le T$, and in particular 
\bear \label{eq:trace-bound1}
&&
  \int_{t_0}^{t_1} \!\! \int_{\Sigma_{12}} (\gamma \, g)^2 \,  J^2 \, \chi \, dyds \lesssim \sum_{i=1,2} \int_{\OO_R} g(t_i,\cdot)^2   \,  dvdy 
\\
\nonumber
&&
 +  \int_{t_0}^{t_1} \!\! \int_{\OO_R} \left\{  g^2 + |G_0 g| + |G_1g| + |G_1 \partial_y g|   \right\}  dydvdt, 
\eear
with $\OO_R := (0,v_F) \times (1/R,R) \supset (0,v_F) \times \hbox{supp} \chi$. 
We now multiply the equation \eqref{eq:trace1} by $g \psi$, $ \psi \in C_c^1(\OO)$, $0 \le \psi \le 1$, and after integration in all the variables, we (at least formally) obtain 
\bean
 \left[ \int_\OO g(s,\cdot)^2  \psi dvdy\right]_{t_0}^{t_1}
 = \int_{t_0}^{t_1} \!\! \int_\OO \left\{  g^2  \partial_v (J \psi)    + 2 G_0 g  \psi  - 2 G_1 \partial_y(g \psi )  \right\}  dydvdt, 
\eean
with $t_0,t_1 \in [0,T]$, $t_0 \not=t_1$, and thus 
\bear \label{eq:trace-bound2}
\int_{K_0} g(t_1,\cdot)^2   dvdy 
&\lesssim& \int_{K_1} g(t_0,\cdot)^2   dvdy 
\\
\nonumber
&&\!\!\!\!\!\!+  \int_{T_0}^{T_1} \!\! \int_{K_1}
 \left\{  g^2 + |G_0 g| + |G_1g| + |G_1 \partial_y g|  \right\}  dydvdt, 
\eear
with $K_0 := \{ \psi = 1 \}$, $K_1 := \hbox{supp} \psi$, $T_0 := \min(t_0,t_1)$, $T_1 := \max(t_0,t_1)$. 

\smallskip\noindent
{\sl Step 2. Approximation and passing to the limit.} 
We define $g_\eps (t,\cdot) = g(t,\cdot) \circledast \rho_\eps$, where for $h \in \Lloc^1(\bar\OO)$, $h$ extended by $0$ to $\R^2 \backslash \OO$, and  $\rho_\eps(y,v) := \eps^{-2} \rho(y/\eps, v/\eps)$, $0 \le \rho \in C^2_c(\R^2)$, supp$\rho \subset [-1,1]^2$, 
we define the  modified convolution $h \circledast \rho_\eps$ by 
$$
h\circledast \rho_\eps(v,y) : =  \int_{\R^2} h(v-v_*-2\eps\nu(v),y-\eps-y_*) \rho_\eps(v_*,y_*) dv_*dy_*.
$$
We similarly define $G_{i,\eps} := G_i \circledast \rho_\eps$ and we 
denote $g_{\eps,\eps'} := g_\eps - g_{\eps'}$, $G_{i,\eps,\eps'} := G_{i,\eps} - G_{i,\eps'}$, $r_\eps := J \partial_v g_\eps - (J \partial_v g) \circledast \rho_\eps$ and $r_{\eps,\eps'} := r_\eps - r_{\eps'}$, so that 
\beqn\label{eq:trace2}
\partial_t g_{\eps} + J \partial_v g_{\eps} = r_{\eps} +  G_{0, \eps} + \partial_y G_{1, \eps}
\eeqn
and then 
\beqn\label{eq:trace3}
\partial_t g_{\eps,\eps'} + J \partial_v g_{\eps,\eps'} = r_{\eps,\eps'} +  G_{0, \eps,\eps'} + \partial_y G_{1, \eps,\eps'}, 
\eeqn
both in the distributional sense in $\UUU$. Thanks to classical convolution results, the   DiPerna-Lions commutator Lemma  \cite[Lemma~II.1]{MR1022305} and its extension {\it up to the boundary} \cite[Lemma~1]{MR1765137} (see also  \cite[Lemma~4.6]{MR2721875}), we have 
\beqn\label{eq:trace4}
g_\eps \to g, \quad  G_{i,\eps} \to G_i,  \quad  r_\eps \to 0 \quad \hbox{in} \quad \Lloc^1(\bar\UUU), 
\eeqn
with $(g_\eps)$   bounded in $L^\infty(\UUU)$,  $(r_\eps)$ bounded in $\Lloc^p(\bar\UUU)$ for any $p \in (1,\infty)$  and
\beqn\label{eq:trace5}
 \fraka^{1/2}  \partial_y g_\eps \to  \fraka^{1/2}  \partial_y g, \quad 
  \fraka^{-1/2}   G_{1,\eps} \to \fraka^{-1/2}   G_{1} \quad \hbox{in} \quad \Lloc^2(\bar\UUU),
\eeqn
from what we deduce 
\beqn\label{eq:trace6}
g_{\eps,\eps'}^2 + |(r_{\eps,\eps'} + G_{0,\eps,\eps'} ) g_{\eps,\eps'}| + |G_{1,\eps,\eps'} g_{\eps,\eps'}| + |G_{1,\eps,\eps'} \partial_y g_{\eps,\eps'}|   \to 0
 \quad \hbox{in} \quad \Lloc^1(\bar\UUU). 
\eeqn
On the other hand, thanks to the fact that $g_\eps$ is defined as a convolution product in $v$ and $y$ and to equation \eqref{eq:trace2}, we see that $g_\eps \in W^{1,1} (0,T;  \Wloc^{1,\infty}(\bar\OO)) \subset C(\bar\UUU)$, so that we may justify the  computations of Step~1 leading to the estimates \eqref{eq:trace-bound1} and \eqref{eq:trace-bound2}. More precisely, we may first fix a time $t_0 \in (0,T)$ such that 
 $g_\eps(t_0,\cdot) \to g(t_0,\cdot)$ in $\Lloc^p(\OO)$ for any $p \in [1,\infty)$ and, starting from \eqref{eq:trace3} and using \eqref{eq:trace6}, we get 
\bean
&&\sup_{t_1 \in [0,T]} \int_{K_0} g_{\eps,\eps'}(t_1,\cdot)^2   dvdy 
\lesssim \int_{K_1} g_{\eps,\eps'}(t_0,\cdot)^2   dvdy 
\\
\nonumber
&&+  \int_{0}^{T} \!\! \int_{K_1} \left\{ g_{\eps,\eps'}^2 + |(r_{\eps,\eps'} + G_{0,\eps,\eps'} ) g_{\eps,\eps'}| + |G_{1,\eps,\eps'} g_{\eps,\eps'}| + |G_{1,\eps,\eps'} \partial_y g_{\eps,\eps'}|   \right\}  dydvdt \to 0, 
\eean
as $\eps,\eps' \to 0$. 
We have established that $(g_\eps)$ is a Cauchy sequence in $C([0,T];\Lloc^2(\OO))$ and there thus exists  $\tilde g \in C([0,T];\Lloc^2(\OO))$ such that $g_\eps \to \tilde g$ in $C([0,T];\Lloc^2(\OO))$. 
Together with \eqref{eq:trace4} and the uniform bound on $(g_\eps)$, we deduce that 
$\tilde g = g$ a.e. (and from now on we use the same notation $g$ for both functions) and 
$g_\eps \to g$ in $C([0,T];\Lloc^p(\bar\OO))$ for any $p \in [1,\infty)$, in particular 
\beqn\label{eq:def-gt}
g(t,\cdot) = \gamma_t g := \lim_{\eps \to 0} g (t,\cdot) \circledast \rho_\eps \ \hbox{ in } \ \Lloc^p(\bar\OO), \quad \forall \, p \in [1,\infty), \ \forall \, t \in [0,T].
\eeqn
Using \eqref{eq:trace-bound1} for the solution $g_{\eps,\eps'}$ of \eqref{eq:trace3} and the already established convergence, we have 
\bean
&&
 \int_{\Gamma_{12,R}} (g_{\eps,\eps'|\Gamma_{12}})^2 \,  J^2 \,  dyds \lesssim \sup_{[0,T]} \int_{\OO_{R'}} g_{\eps,\eps'}(t,\cdot)^2   \,  dvdy 
\\
\nonumber
&&
 +  \int_{t_0}^{t_1} \!\! \int_{\OO_{R'}} \left\{ g_{\eps,\eps'}^2 + |(r_{\eps,\eps'} + G_{0,\eps,\eps'} ) g_{\eps,\eps'}| + |G_{1,\eps,\eps'} g_{\eps,\eps'}| + |G_{1,\eps,\eps'} \partial_y g_{\eps,\eps'}|   \right\}  dydvdt \to 0, 
\eean
 as $\eps,\eps' \to 0$ for any $0 < R < 1 < R'$.
 It is worth emphasizing that $g_{\eps |\Gamma_{12}}$ is well defined because $g_{\eps} \in C(\bar\UUU)$.  
 As a consequence, we have   established that $(g_{\eps|\Gamma_{12}})$ is a Cauchy sequence in $\Lloc^2(\Gamma_{12}; J^2 dydt)$ and there thus exists  $\gamma g \in \Lloc^2(\Gamma_{12}; J^2 dydt)$  such that $g_{\eps|\Gamma_{12}} \to \gamma g$ in $\Lloc^2(\Gamma_{12}; J^2 dydt)$. 
From the above convergences and bounds, we obtain  
\beqn\label{eq:def-gammag}
\gamma g :=   \lim_{\eps \to 0} (g  \circledast \rho_\eps)_{|\Gamma_{12}} \ \hbox{ in } \ \Lloc^p(\Gamma_{12};  dydt), \  \forall \, p \in [1,\infty), \  
\quad
\gamma g \in L^\infty(\Gamma_{12}).
\eeqn
In the case when $g \ge 0$ on $\UUU$, we have $g_\eps \ge 0$ on $\bar\UUU$ by construction, and thus $\gamma g \ge 0$ on $\Gamma_{12}$.
Coming back to \eqref{eq:trace2} and using the chain rule, we have 
\bean
&&
  \int_{0}^t \!\! \int_{\Sigma_{12}} \beta(g_{\eps|\Gamma_{12}}) \, \varphi \, J \nu \, dyds
+ \left[ \int_\OO (\beta(g_{\eps})  \varphi)(s,\cdot) \,  dvdy\right]_0^t
\\
&&= \int_{\UUU} \left\{  \beta(g_\eps) \, ( \partial_t  \varphi + \partial_v (J \varphi))  + (r_\eps+G_{0,\eps}) \varphi \beta'(g_\eps)  - G_{1,\eps} \partial_y(\varphi \beta'(g_\eps) )  \right\}  dydvdt,
\eean
for any renormalizing function $\beta \in C^2(\R)$ and any test function  $\varphi \in  \DD(\UUU \cup \Gamma_{12})$. 
We thus deduce \eqref{eq:traceL} by passing to the limit in the above identity. 
\end{proof}

 We now recall the theory of renormalized solutions due to DiPerna \& Lions for  kinetic Fokker-Planck equations first introduced in \cite{MR972541}. 
 We particularize our discussion to the case of  a measurable function $f:\UUU \to \R_+$ which satisfies 
\beqn\label{eq:trace-boundL1HF}
\sup_{[0,T]} \int_\OO f (1 + y + |\log f|) dydv + \int_\UUU \fraka |\partial_y \sqrt{f}|^2 dydv  < \infty 
\eeqn
as well as the linear  kinetic Fokker-Planck equation
\beqn\label{eq:kFP-lin}
 \partial_t f + \partial_v (Jf) + \partial_y( \frak K f) - \fraka \partial_{yy}^2 f  = 0 
\eeqn
in the distributional sense in $\UUU$,  
 with $\frak K = \frak b - y$, $\fraka, \frakb \in L^1(0,T)$, $\fraka \ge a_*$. It is worth emphasizing that $\frak K f, \frak a f \in L^1(\UUU)$ because of \eqref{eq:trace-boundL1HF}, so that \eqref{eq:kFP-lin} is meaningful. 
 We define   $\BB_1$ as the class of functions $\beta \in C^2(\R;\R)$ such that $0 \le -\beta''(s)  \lesssim (1+s)^{-1}$, so that $|\beta'(s)|   \lesssim 1 + \log (1+s)$ and $|\beta(s)|   \lesssim 1 + |s \log s|$.
 
\begin{theo}\label{theo:renormalization} Any measurable and nonnegative function $f$ on $\UUU$ satisfying   \eqref{eq:trace-boundL1HF}-\eqref{eq:kFP-lin} is also a renormalized solution in the sense that 
\beqn\label{eq:renormalizedSOL}
 \partial_t \beta(f) +  J \partial_v  \beta(f) +  \frak K  \partial_y\beta(f) - \fraka \partial_{yy}^2 \beta(f)   + f \beta'(f) (\partial_v J -1) +  \fraka \beta''(f) (\partial_{y} f)^2 = 0
 \eeqn
 in the distributional sense in $\UUU$ for any $\beta \in \BB_1$.  
\end{theo}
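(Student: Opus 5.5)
We follow the DiPerna--Lions renormalization strategy for kinetic Fokker--Planck equations, with regularization in $(v,y)$ only and Theorem~\ref{theo:trace1} as the key tool. The equation is first recast in the transport form of \eqref{eq:trace1}. Since $\partial_y \frak K = -1$, equation \eqref{eq:kFP-lin} reads
$$
\partial_t f + J\partial_v f = G_0 + \partial_y G_1, \quad G_0 := -f(\partial_v J - 1) - \frak K \partial_y f, \quad G_1 := \fraka \partial_y f .
$$
From \eqref{eq:trace-boundL1HF} we get $\partial_y f = 2\sqrt f\,\partial_y\sqrt f$, hence $\fraka^{1/2}\partial_y f \in L^1_t L^1_{loc}$. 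More precisely, $|\partial_y f|^2/f \in L^1(\fraka\,dt\,dv\,dy)$. The difficulty is that $f$ is not bounded, so Theorem~\ref{theo:trace1} cannot be applied directly to $f$.

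\emph{Step 1: mollification.} We convolve in $(v,y)$ only, as in the proof of Theorem~\ref{theo:trace1} (with the shifted kernel, or a plain interior kernel since the statement is only distributional in $\UUU$), and set $f_\eps := f\circledast\rho_\eps$. Then
$$
\partial_t f_\eps + J\partial_v f_\eps + \frak K\partial_y f_\eps - \fraka\partial_{yy} f_\eps + f_\eps(\partial_v J - 1) = r_\eps,
$$
where $r_\eps$ gathers the commutators of convolution with $J\partial_v$, $y\partial_y$ and the zero-order term. Because $\fraka$ and $\frakb$ depend only on $t$, the terms $\fraka\partial_{yy}$ and $\frakb\partial_y$ commute exactly with the convolution. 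The commutator lemma \cite[Lemma~II.1]{MR1022305} then gives $r_\eps \to 0$ in $\Lloc^1(\UUU)$, since $J$ and $y$ are smooth and $f\in L^\infty_t L^1$. For fixed $\eps$, $f_\eps$ is smooth in $(v,y)$ and $W^{1,1}$ in $t$ locally, so the chain rule yields \eqref{eq:renormalizedSOL} for $f_\eps$ with the extra term $\beta'(f_\eps) r_\eps$.

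\emph{Step 2: passage to the limit.} Fix $\beta\in\BB_1$ and a test function $\varphi\in C^2_c(\UUU)$. Note that $\beta'$ may be unbounded, growing like $\log s$, so we first take $\beta$ with $\beta'$ bounded, i.e.\ $\beta''$ compactly supported, and recover general $\beta\in\BB_1$ by monotone approximation $\beta_n''\uparrow\beta''$. For bounded $\beta'$ the linear terms converge because $\beta(f_\eps)\to\beta(f)$ in $L^1_{loc}$, and $\beta'(f_\eps) r_\eps\to0$.

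The main obstacle is the term $\fraka\beta''(f_\eps)|\partial_y f_\eps|^2$. Since $-\beta''(s)\lesssim(1+s)^{-1}$, it is bounded by $\fraka|\partial_y f_\eps|^2/f_\eps$. By convexity of $(a,b)\mapsto a^2/b$, this is at most $(\fraka|\partial_y f|^2/f)\circledast\rho_\eps$, which converges in $L^1_{loc}$. Hence the family is equi-integrable, and a.e.\ convergence along a subsequence (from $\partial_y f_\eps\to\partial_y f$ in $L^1_{loc}$) combined with Vitali's theorem gives convergence. This convexity/Vitali argument is the "classical convergence result" we expect to use.

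\emph{Step 3: general $\beta$.} For general $\beta\in\BB_1$, let $n\to\infty$ in the identity for $\beta_n$. The terms $\beta_n(f)$ and $f\beta_n'(f)$ are dominated by $1+f|\log f|$, which is integrable by \eqref{eq:trace-boundL1HF}. The term $\fraka\beta_n''(f)|\partial_y f|^2$ converges by monotone convergence, dominated by $\fraka|\partial_y f|^2/f\in L^1$. This yields \eqref{eq:renormalizedSOL}. If interior mollification is used, the distributional statement in $\UUU$ needs no boundary information, so Theorem~\ref{theo:trace1} is only needed later for traces.
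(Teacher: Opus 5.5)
Your proof is correct, but the key compactness step differs from the paper's.

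\textbf{The paper's route.} The paper first localizes, $\bar f := f\chi$, so that it works on the whole space $(0,T)\times\R^2$ with a Gaussian mollifier. It then uses that entropy and partial Fisher information decrease along the heat flow, \eqref{eq:bound-HI-feps}. Combining $\II(f_\eps(t))\le\II(\bar f(t))$ with the lower semicontinuity of $\II$ gives $\II(f_\eps(t))\to\II(\bar f(t))$, hence strong $L^2$ convergence of $\partial_y\sqrt{f_\eps(t)}$ for each $t$. It concludes with dominated convergence in time, using $\fraka\II(f_\eps(t))\le\fraka\II(\bar f(t))$, and Egorov with $\beta''(f_\eps)f_\eps$ bounded.

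\textbf{Your route.} You use the pointwise form of the same convexity. Joint convexity and $1$-homogeneity of $(a,b)\mapsto a^2/b$ give $\fraka|\partial_y f_\eps|^2/f_\eps\le(\fraka|\partial_y f|^2/f)*\rho_\eps$, and the generalized dominated convergence (Vitali) theorem finishes the argument. This is shorter and more elementary. It works with any compactly supported probability kernel in the interior, and needs no cutoff or extension, no lower semicontinuity of $\II$, and no time-by-time norm-convergence argument.

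\textbf{Handling of the other terms.} Your two-stage approximation in $\beta$ also controls the remaining terms more carefully. You first take $\beta'$ bounded, then reach $\beta\in\BB_1$ by dominated convergence using the entropy and Fisher bounds. This properly treats the product $\beta'(f_\eps)r_\eps$, where $\beta'$ is unbounded and $r_\eps\to0$ only in $L^1_{\rm loc}$, as well as the $s\log s$ growth of $\beta(f_\eps)$. The paper covers these by the entropy bound in \eqref{eq:bound-HI-feps} and the remark that one ``may easily pass to the limit in the other terms''.

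\textbf{Minor corrections.}
\begin{enumerate}
\item The approximation should be $|\beta_n''|\uparrow|\beta''|$, for instance $\beta_n'':=\beta''\chi_n$ with cutoffs $\chi_n\uparrow1$. As written, $\beta_n''\uparrow\beta''$ is incompatible with compact support, since $\beta''\le0$.
\item For terms such as $\fraka\,\beta(f_\eps)\,\partial^2_{yy}\varphi$, $L^1_{\rm loc}$ convergence of $\beta(f_\eps)$ alone is not enough, because $\fraka$ is only $L^1$ in time. Use instead that $\|f_\eps(t)-f(t)\|_{L^1(K)}\to0$ for a.e.\ $t$, together with $\sup_t\|f(t)\|_{L^1}<\infty$ and dominated convergence in $t$.
\item Theorem~\ref{theo:trace1} plays no role in your argument, despite being announced as the key tool; as you note at the end, it is only needed later for traces.
\end{enumerate}
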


\begin{proof}[Sketch of the proof of Theorem~\ref{theo:renormalization}]
The proof is similar to the proof of  \cite[Theorem~5.2]{MR2721875}, see also \cite{MR972541},  and we thus just allude it. 
The equation is meaningful because all the terms $\beta(f)$,  $\frak K \beta(f)$,  $\fraka \beta(f)$, $f \beta'(f)$ and $\fraka \beta''(f) (\partial_{y} f)^2$ belong to  $\Lloc^1(\UUU)$ as a direct consequence of the bounds
\eqref{eq:trace-boundL1HF} on $f$ and the bounds on $\frak K$,  $\fraka$ and $\beta$. 

\smallskip
 Because it is a local property, we may localize in the $v$ and $y$ variables by introducing a set $\OO' \subset\subset \OO$, a function $0 \le \chi \le 1$, $\sqrt{\chi} \in W^{1,\infty}(\R^2)$, $\chi \equiv 1$ on $\OO'$, supp$\chi \subset\subset \OO$ and next defining $\bar f := f \chi$. Observing that 
 $$
 \int \bar f |\log \bar f| \le  \int \chi  f |\log f| +  \int f   \chi |\log \chi|
 $$
 and 
 $$
\int_{\UUU} \fraka(\partial_y \sqrt{\bar f})^2   \le 2 \int_{\UUU} \fraka   (\partial_y \sqrt{ f})^2 + 2 \int_{\UUU} \fraka  f  ( \partial_y \sqrt{\chi} )^2,
$$
we see that  $\bar f$ satisfies the same bound \eqref{eq:trace-boundL1HF} and the equation 
$$
 \partial_t \bar  f + J   \partial_v  \bar  f  +  \frak K     \partial_y  \bar  f  -   \fraka  \partial_{yy}^2  \bar  f  =  Q,
$$
 in the sense of distributions in $\VVV :=  (0,T) \times \R^2$, with
$$
Q := \bar f (1 - \partial_v J ) + f ( J   \partial_v \chi  +  \frak K     \partial_y  \chi) -  \fraka   \partial_y f    \partial_y \chi 
-  \fraka   f \partial^2_{yy} \chi \in L^1(\VVV), 
$$
and thus $Q  \in L^1(\VVV)$ and $Q =   f (1-\partial_v J)$ in $\UUU' := (0,T) \times \OO'$.   In the above computation, we have used that $\partial_y \frak K = -1$.
Because we are now working in the whole space, 
 
we may introduce the mollifier $(\rho_\eps)_{\eps > 0}$ with $\rho_\eps (v,y) := \xi_\eps(v) \xi_\eps(y)$ and $(\xi_\eps)$ is the gaussian function with variance $\eps$ and we may define  
$f_\eps := \bar f  * \rho_\eps$,  where $* = *_{v,y}$ stands for the convolution operator in the $v$ and $y$ variables. From the above construction, we have 
\beqn\label{eq:bound-HI-feps}
\HH (f_\eps (t,\cdot)) \le \HH (  \bar f (t,\cdot)), \quad 
\II ( f_\eps (t,\cdot)) \le \II (   \bar f (t,\cdot)), \quad \forall \, \eps > 0, \ \hbox{for a.e. } t \in (0,T), 
\eeqn
what is a consequence of the fact that both the entropy $\HH$ and the (partial) Fisher information $\II$ are decreasing along the flow of the heat equation in  $\R^2$. 
We deduce that $(f_\eps)$ satisfies the similar estimate as  \eqref{eq:trace-boundL1HF} in $\VVV$, uniformly in $\eps > 0$, and the equation
$$
 \partial_t f_\eps + J   \partial_v f_\eps +  \frak K     \partial_y f_\eps -   \fraka  \partial_{yy}^2 f_\eps =  Q_\eps + r_\eps,
 $$
 in the distributional sense in $\VVV$, with 
\bean
Q_\eps :=    -Q*\rho_\eps, \quad 
r_\eps := J   \partial_v f_\eps - (J \partial_v \bar f) * \rho_\eps  + \frak K     \partial_y f_\eps  - (\frak K \partial_y \bar f) * \rho_\eps. 
\eean
We have $f_\eps \in W^{1,1}(0,T;\Wloc^{2,\infty}(\R^2))$, from the definition of $f_\eps$ as a convolution and the above equation.
 The chain rule thus applies and we have 
\beqn\label{eq:betafeps}
 \partial_t \beta(f_\eps) + J   \partial_v \beta(f_\eps) +  \frak K     \partial_y \beta(f_\eps) -   \fraka  \partial_{yy}^2 \beta(f_\eps) =      \fraka  \beta''(f_\eps) (\partial_y f_\eps)^2  + (Q_\eps + r_\eps) \beta'(f_\eps) ,
\eeqn
 in the distributional sense in $\VVV$. We also observe that 
\beqn\label{eq:fepsTObarf}
 f_\eps \to \bar f \hbox{ strongly in } L^1(\VVV) \hbox{ and is uniformly bounded in } L^\infty(0,T;L^1(\R^2)), 
\eeqn
as well as  $Q_\eps \to Q$  in $L^1(\VVV)$ because $Q \in L^1(\VVV)$ 
 and $r_\eps \to 0$ in $L^1(\VVV)$ thanks to DiPerna-Lions commutator Lemma  \cite[Lemma~II.1]{MR1022305}. We recognize the assumptions of  \cite[Theorem~5.2]{MR2721875}, 
 except for the function $\fraka \in L^1(0,T)$ which was a positive constant in that previous framework.  We could adapt the same arguments. We rather follow a slightly different way that we believe to be simpler. Because the Fisher information $\II$ is lsc (for the weak convergence of measures), we have $ \II (   \bar f (t,\cdot)) \le \liminf  \II ( f_\eps (t,\cdot))$. Together with the second information in \eqref{eq:bound-HI-feps}, we have 
 $$
\| \partial_y \sqrt{f_\eps(t,\cdot)}  \|_{L^2}^2 = \frac14  \II ( f_\eps (t,\cdot)) \to   \frac14  \II (   \bar f (t,\cdot)) = \| \partial_y \sqrt{ \bar f (t,\cdot)}  \|_{L^2}^2, 
$$
as $\eps \to 0$, for any fixed $t \in (0,T)$. From \eqref{eq:fepsTObarf}, we know that $\partial_y \sqrt{f_\eps(t,\cdot)}  \wto \partial_y \sqrt{ \bar f (t,\cdot)} $ weakly in $\DD'(\R^2)$.
The two last convergences together imply that $\partial_y \sqrt{f_\eps(t,\cdot)}  \to \partial_y \sqrt{ \bar f (t,\cdot)} $ strongly  in $L^2(\R^2)$ as $\eps \to 0$, for any $t \in (0,T)$. 
Using the domination 
$$
\fraka  \II ( f_\eps (t,\cdot)) \le \fraka \II (   \bar f (t,\cdot)) \in L^1(0,T), 
$$
from \eqref{eq:bound-HI-feps} again and the fact that $\bar f$ satisfies the same bound \eqref{eq:trace-boundL1HF}, we may use the convergence dominated theorem of Lebesgue in order to obtain that 
$\fraka (\partial_y \sqrt{f_\eps})^2  \to \fraka ( \partial_y \sqrt{ \bar f  })^2 $ strongly  in $L^1((0,T) \times \R^2)$. We classically deduce that 
$$ 
 \fraka  \beta''(f_\eps) (\partial_y f_\eps)^2 =  4 \beta''(f_\eps) f_\eps  \fraka   (\partial_y \sqrt{f_\eps})^2 \  \to  \ 
4  \beta''(f) f  \fraka \bigl( \partial_y   {\sqrt{ \bar f }} \, \bigr)^2   =   \fraka  \beta''(f) (\partial_y \bar f)^2 
 $$
where we have used that $(\beta''(f_\eps) f_\eps)$ is bounded in $L^\infty$, $ \beta''(f_\eps) f_\eps \to  \beta''(f) f$ a.e. and the Egorov Theorem.  
Because we may easily pass to the limit in the other terms in \eqref{eq:betafeps}, we obtain \eqref{eq:renormalizedSOL} as the limit $\eps \to 0$ of  \eqref{eq:betafeps}. 
 Because $\OO' \subset\subset \OO$ is arbitrary, that ends the proof. \end{proof}

The two above results  together make possible to define a trace for solutions to kinetic Fokker-Planck equations in a non uniformly bounded framework.
 We define $\BB_2$ as the class of functions $\beta \in \BB_1$ such that furthermore $|\beta(s)| \lesssim \sqrt{s}$.

\begin{theo}\label{theo:trace2} Consider a measurable and nonnegative function $f$ on $\UUU$ satisfying   \eqref{eq:trace-boundL1HF}-\eqref{eq:kFP-lin}. Then 
$f \in C([0,T];L^1(\OO))$
and $f$ admits a trace $0 \le \gamma f \in \Lloc^1(\Gamma_{12}; J^2 dydt)$ in the sense that 
\bear\label{eq:trace-renorm}
&&
  \int_{0}^t \!\! \int_{\Sigma_{12}} \beta(\gamma \, f) \, \varphi \, J \nu \, dyds
+ \left[ \int_\OO \beta(f(s,\cdot))  \varphi(s,\cdot) \,  dvdy\right]_0^t
\\
\nonumber
&&= \int_{\UUU} \left\{  \beta(f) \, ( \partial_t  \varphi + \partial_v (J \varphi) + \partial_y (\frak K \varphi)) +  {\bf F}^\beta_0 \varphi   - {\bf F}^\beta_1 \partial_y\varphi   \right\}  dydvdt
\eear
for any $\beta \in \BB_2$ and $\varphi \in C^2_c(\UUU \cup \Gamma_{12})$ and for  any $\beta \in \BB_1$ and $\varphi \in C^2_c(\UUU \cup \Gamma_1 \cup \Gamma_2)$, where  
$$
 {\bf F}^\beta_0 :=   f \beta'(f) (1-\partial_v J) -  \fraka \beta''(f) (\partial_{y} f)^2, \quad
{\bf F}^\beta_1 :=   \fraka \partial_{y} \beta(f). 
$$
\end{theo}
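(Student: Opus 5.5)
The plan is to combine Theorem~\ref{theo:renormalization} with Theorem~\ref{theo:trace1}, applied to bounded renormalizations of $f$, and then reconstruct the trace of $f$ itself by monotone limits. Fix a bounded, increasing, concave function $\beta_M \in \BB_1$, for instance $\beta_M(s) := M \beta_1(s/M)$ with $\beta_1(s) = s/(1+s)$, or a smooth truncation of $s$ at level $M$. By Theorem~\ref{theo:renormalization}, the function $g := \beta_M(f) \in L^\infty(\UUU)$ satisfies \eqref{eq:trace1} in the distributional sense with
$$
G_0 := -\frak K \partial_y g - (\partial_v J) g - f\beta_M'(f)(\partial_v J-1) - \fraka \beta_M''(f)(\partial_y f)^2, \qquad G_1 := \fraka \partial_y g .
$$
Here $J\partial_v g = \partial_v(Jg) - (\partial_v J) g$. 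We check the hypotheses of Theorem~\ref{theo:trace1} as follows. First, $\fraka^{1/2}\partial_y g = 2\fraka^{1/2}\beta_M'(f)\sqrt f\,\partial_y\sqrt f$, and $\beta_M'(s)\sqrt s$ is bounded, so this term is in $L^2$ by \eqref{eq:trace-boundL1HF}. Second, $\fraka^{-1/2}G_1 = \fraka^{1/2}\partial_y g \in L^2$ for the same reason. Third, $G_0 \in \Lloc^1(\bar\UUU)$, since $|\beta''_M(s)| s \lesssim 1$ gives $\fraka|\beta_M''(f)|(\partial_yf)^2 \lesssim \fraka |\partial_y\sqrt f|^2$, and the $\frak K\partial_y g$ term is $L^1_{loc}$ because $\frak b\in L^1(0,T)$ and $\fraka\ge a_*$ (use Cauchy--Schwarz with weight $\fraka$). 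Theorem~\ref{theo:trace1} then yields traces $\gamma \beta_M(f)$ and $\gamma_t\beta_M(f)$, together with the renormalized Green formula \eqref{eq:traceL} for every composition $\tilde\beta\circ\beta_M$.

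Next we define $\gamma f$. Take $\beta_M$ to be increasing in $M$, with $\beta_M(s)\uparrow s$ and $\beta_M = \mathrm{id}$ on $[0,M]$. Consistency of the traces, namely $\gamma\beta_{M}(f) = \beta_M(\beta_{M'}^{-1}(\gamma\beta_{M'}(f)))$ for $M<M'$, follows from the construction by mollification in Theorem~\ref{theo:trace1}: the trace commutes with Lipschitz functions by \eqref{eq:def-gammag}, since the restricted mollifications converge in $\Lloc^p$. We then set $\gamma f := \lim_M \gamma\beta_M(f) = \sup_M \gamma \beta_M(f)\ge0$, which is a monotone limit. Formula \eqref{eq:trace-renorm} for $\beta_M$ itself is a rewriting of \eqref{eq:traceL} after moving the $\frak K$ drift term back under a derivative, i.e. $\frak K\partial_y\beta = \partial_y(\frak K\beta)+\beta$, and recombining with $\partial_vJ$. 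The same holds for $\beta\circ\beta_M$ with $\beta\in\BB_1$, via the chain rule identity for $\tilde\beta = \beta\circ\beta_M$. Continuity $f\in C([0,T];L^1(\OO))$ comes from $\beta_M(f)\in C([0,T];\Lloc^1)$ together with the uniform bound $\sup_t\int f(1+y+|\log f|)$. That bound controls $\int (f-\beta_M(f))$ and the mass near $y=0$, $y=\infty$ and the $v$-boundary uniformly in $t$ (equi-integrability), so $f$ is a uniform-in-time limit in $L^1$.

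The main obstacle is passing $M\to\infty$ in \eqref{eq:trace-renorm} with $\beta\circ\beta_M$, in order to obtain it for $\beta\in\BB_2$ (test functions touching all of $\Gamma_{12}$) and for $\beta\in\BB_1$ (test functions supported away from the boundary set $\{0,v_F\}\times\{y_F\}$, where $J$ vanishes). The interior terms converge by dominated convergence. Indeed $|\beta(f)|\lesssim 1+f|\log f|$ and $|f\beta'(f)|$ are controlled by the entropy bound, while $\fraka|\beta''(f)|(\partial_yf)^2$ and the error terms $\fraka[\beta'(\beta_M)\beta_M'' ](\partial_y f)^2$ are dominated by $\fraka|\partial_y\sqrt f|^2$ and vanish on $\{f\le M\}$. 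For the boundary term I would first take $\beta$ nonnegative (or split $\beta=\beta^+-\beta^-$ monotonically) and use monotone convergence on $\gamma\beta_M(f)\uparrow\gamma f$ separately on the incoming part $J\nu<0$ and the outgoing part $J\nu>0$. Every other term in the identity converges, so this gives finiteness of $\int \beta(\gamma f)\varphi |J|$ on each part together with the limit identity. Taking $\beta(s)=\sqrt s$-type functions in $\BB_2$ and $\varphi$ supported near $\Gamma_{12}$ gives $\int J^2(\gamma f)^{1/2}\dots<\infty$. Taking $\beta(s)=\min(s,\cdot)$-type concave functions and $\varphi = J\nu\chi$ gives $\gamma f\in\Lloc^1(\Gamma_{12};J^2dydt)$ as in \eqref{eq:trace-bound1}. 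The restriction in the $\BB_1$ case is exactly where the sign of $J\nu$ is definite, which is what makes the monotone argument legitimate there. I expect this to be where the care is needed.
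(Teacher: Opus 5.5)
Your proof has the same skeleton as the paper's: a bounded renormalization, then Theorem~\ref{theo:trace1}, then the test function $\varphi=J\nu\chi$ for the $J^2$-weighted trace bound, then removal of the truncation. The device differs, though.

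\textbf{The paper's route.} The paper uses the single bijection $\alpha(s)=s/(1+s)$ and sets $\gamma f:=\alpha^{-1}(\gamma\,\alpha(f))$. Every $\beta$ with $\beta'\in C^1_c$ factors as $\beta_0\circ\alpha$ with $\beta_0\in C^2([0,1])$, so the identity follows from \eqref{eq:traceL} by the chain rule, with no compatibility issue. It then obtains $\gamma f\log(1+\gamma f)\in\Lloc^1(\Gamma_{12};J^2dydt)$ from $\varphi=J\nu\chi$ and concludes by density.

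\textbf{Consistency of your traces.} Your family $(\beta_M)$ requires the traces $\gamma\beta_M(f)$ to be consistent, and the reason you give does not prove it. Mollification does not commute with $\phi:=\beta_M\circ\beta_{M'}^{-1}$, so convergence of $(h\circledast\rho_\eps)_{|\Gamma_{12}}$ says nothing by itself about $(\phi(h)\circledast\rho_\eps)_{|\Gamma_{12}}$. Argue by uniqueness instead. Write \eqref{eq:traceL} for $h=\beta_{M'}(f)$ with renormalization $\phi$, and for $g=\phi(h)$ with the identity, using $G^g_0=\phi'(h)G^h_0-\phi''(h)\,\partial_yh\,G^h_1$ and $G^g_1=\phi'(h)G^h_1$. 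The right-hand sides and the time terms coincide, so $\int(\gamma g-\phi(\gamma h))\varphi J\nu=0$ for all $\varphi$. Since $J\ne0$ a.e.\ on $\Gamma_{12}$, this gives $\gamma g=\phi(\gamma h)$. Note that your first example $M\beta_1(s/M)$ is already a bijection onto $[0,M)$, and for $M=1$ it is the paper's $\alpha$. Using a single such function removes the consistency step altogether.

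\textbf{Order of steps as $M\to\infty$.} Monotone convergence on the boundary is legitimate only on pieces where $J\nu$ has one sign, after a partition of unity and writing $\varphi$ as a difference of nonnegative test functions. That covers $\beta\in\BB_1$ with $\varphi\in C^2_c(\UUU\cup\Gamma_1\cup\Gamma_2)$. For $\beta\in\BB_2$ and $\varphi$ touching $(v_F,y_F)$, both signs occur and you may face $\infty-\infty$. There you must first prove $\gamma f\in\Lloc^1(\Gamma_{12};J^2dydt)$ with $\varphi=J\nu\chi$, where the boundary term is nonnegative on all of $\Gamma_{12}$. Then use dominated convergence, via $|\beta(\beta_M(\gamma f))|\lesssim\sqrt{\gamma f}$ and $\int\sqrt{\gamma f}\,|J|\,|\varphi|\le(\int\gamma f\,J^2|\varphi|)^{1/2}(\int|\varphi|)^{1/2}$.

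\textbf{Choice of truncation.} Your uniform domination of $\fraka\,\beta'(\beta_M(f))\,\beta_M''(f)\,(\partial_yf)^2$ by $\fraka|\partial_y\sqrt f|^2$ needs $s|\beta_M''(s)|\,(1+\log(1+\beta_M(s)))\lesssim1$ uniformly in $M$. This fails if the slope of $\beta_M$ drops from $1$ to $0$ on an interval like $[M,2M]$: an extra factor $\log M$ appears for $\beta\in\BB_1$ with logarithmically growing $\beta'$. It holds, for instance, with $\beta_M'(s):=\theta(\log s/\log M)$, where $\theta$ is smooth and nonincreasing, $\theta=1$ on $(-\infty,1]$ and $\theta=0$ on $[2,\infty)$. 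This choice is also monotone in $M$ and concave. The paper's density step implicitly needs the same care.

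\textbf{Minor slip.} Drop the term $-(\partial_vJ)g$ from your $G_0$. \eqref{eq:renormalizedSOL} is already written with $J\partial_v\beta(f)$, which is exactly the form required in \eqref{eq:trace1}.
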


A (usual) subtlety comes from the lack of control of  the trace function $\gamma f$ in the point $\{ (v,y) \in \Sigma_{12}; J(v,y) = 0 \} = \{(v_F,y_F)\}$ in order to give 
a sense to the first (boundary) integral in \eqref{eq:trace-renorm} when  $\beta \in \BB_1$ and $\varphi \in C^2_c(\UUU \cup \Gamma_{12})$. We however have $\beta(\gamma \, f)  J \in \Lloc^2(\Gamma_{12}; dydt)$
when $\beta \in \BB_2$ and $\varphi \in C^2_c(\UUU \cup \Gamma_{12})$ and   $\beta(\gamma \, f) J \in \Lloc^1(\Gamma_{12,R} \backslash \{ v_F \} \times [y_F-1/R,y_F+1/R]; dydt)$ when 
$\beta \in \BB_1$ and $\varphi \in C^2_c(\UUU \cup \Gamma_1 \cup \Gamma_2)$, supp$\, \varphi \cap \{ v_F \} \times [y_F-1/R,y_F+1/R] = \emptyset$, so that in both cases the  first (boundary) integral 
is well-defined. 

 \smallskip
\begin{proof}[Proof of Theorem~\ref{theo:trace2}] We argue similarly as in the proof of \cite[Theorem 5]{MR1776840} and \cite[Theorem~4.5]{MR2721875}. 
For  $\alpha(s) := s/(1+s)$,  Theorem~\ref{theo:renormalization} tells us that 
$$
  \partial_t g +  J \partial_v  g = G^\alpha_0 + \partial_y G^\alpha_1, 
 $$
 with 
 $$
  g:= \alpha(f), \quad
G^\alpha_0 := - \frak K  \partial_yg  -  f \alpha'(f) (\partial_v J -1) -   \fraka \alpha''(f) (\partial_{y} f)^2, \quad
G^\alpha_1 :=   \fraka \partial_{y} g. 
 $$
 It is worth emphasizing that $G^\alpha_0 \in  \Lloc^1(\bar\UUU)$, because in particular 
 $$
 |\partial_y\alpha(f)| \lesssim |\partial_y f| \lesssim f + { (\partial_y f)^2 \over f} \in L^1(\UUU)
 $$
 and 
 $$
 | \fraka \alpha''(f) (\partial_{y} f)^2| \lesssim  \fraka  { (\partial_y f)^2 \over f} \in L^1(\UUU).
 $$
Similarly, we have  $G^\alpha_1 \in L^1(\UUU)$, because 
 $$
 |\fraka \partial_y\alpha(f)| \lesssim \fraka f +  \fraka { (\partial_y f)^2 \over f} \in L^1(\UUU) 
 $$
and $\fraka^{-1/2} G^\alpha_1 \in L^2(\UUU)$, because 
 $$
  |\fraka^{-1/2} G^\alpha_1| = \fraka^{1/2} {|\partial_y f | \over (1+f)^2} \in L^2(\UUU).
  $$
 From Theorem~\ref{theo:trace1}, there exist some trace functions $0 \le \gamma^\alpha \in L^\infty(\Gamma_{12})$ and $\gamma^\alpha_t \in L^\infty(\OO)$ such that 
 \bean
&&
  \int_{0}^t \!\! \int_{\Sigma_{12}} \beta_0(\gamma^\alpha)  \, \varphi \, J \nu \, dyds
+ \left[ \int_\OO \beta_0(\gamma^\alpha_s)   \varphi(s,\cdot) \,  dvdy\right]_0^t
\\
\nonumber
&&= \int_{\UUU} \left\{  \beta_0(g) \, ( \partial_t  \varphi + \partial_v (J \varphi))  + G^\alpha_0 \varphi \beta_0'(g)  - G^\alpha_1 \partial_y(\varphi \beta_0'(g) )  \right\}  dydvdt,
\eean
for any $\varphi \in C^2_c(\UUU \cup \Gamma_{12})$ and $\beta_0 \in C^2([0,1])$. 
Defining $\gamma f := \alpha^{-1}(\gamma^\alpha)$ and $\gamma_t f := \alpha^{-1}(\gamma_t^\alpha)$ and using the chain rule in the RHS term, we deduce \eqref{eq:trace-renorm}
with $\beta := \beta_0 \circ \alpha$ and $\varphi \in C^2_c(\UUU \cup \Gamma_{12})$. More precisely,  defining $\BB$ the class of renormalizing functions $\beta \in  C^2(\R)$ such that $\beta' \in C^1_c(\R)$, we have established that \eqref{eq:trace-renorm} holds for any $\beta \in \BB$ by choosing $\beta_0 (\tau) := \beta(\alpha^{-1}(\tau))$ (so that supp$\, \beta_0' \subset [0,1)$ and we may define $\beta_0 \in C^2([0,1])$ by an extension by continuity). 

\smallskip
Choosing $\varphi := (J\nu) \chi$, $\chi \in C^2_c(\UUU \cup \Gamma_{12})$, $0 \le \chi \le 1$, supp$\, \chi \subset [0,T] \times \OO_R$, $R > 1$, and proceeding exactly as in the proof of Proposition~\ref{prop:APB3}  and of estimate \eqref{eq:trace-bound1}, we obtain 
\bear\label{eq:trace2-renorm-beta-bound}
&&
  \int_0^T \!\! \int_{\Sigma_{12}} \beta (\gamma \, f) \,  J^2 \, \chi \, dyds \lesssim \sup_{[0,T]}  \int_{\OO_R} \beta(f(t,\cdot)) \,  dvdy 
\\
\nonumber
&&
 +  \int_{t_0}^{t_1} \!\! \int_{\OO_R} \left\{ ( |\beta(f)| + |f \beta'(f)| ) (1 + |\frak b|) + \frak a |\beta''(f)| (\partial_y f)^2  \right\}  dydvdt. 
\eear
Choosing a sequence $(\beta_\eps)$ of $\BB$ such that $\beta_\eps \nearrow s \log (1+s)$, $|\beta'_\eps(s) | \le C (1 + \log (1+s))$ and $|\beta''_\eps(s) | \le C (1+ s)^{-1}$, 
we may pass to the limit in \eqref{eq:trace2-renorm-beta-bound} and we conclude to $\gamma f \log (1 + \gamma f) \in \Lloc^1(\Gamma_{12};J^2 dydt)$. On the other hand, using that $\beta(\gamma_t f) = \beta_0 (\gamma^\alpha_t) \in C([0,T];\Lloc^1(\OO))$ and \eqref{eq:trace-boundL1HF}, we deduce that $f  \in C([0,T];L^1(\OO))$. Using these estimates on $f$ and $\gamma f$, we may come back to \eqref{eq:trace-renorm} which already holds for $\beta \in \BB$ and $\varphi \in C^2_c(\UUU \cup \Gamma_{12})$, and then extend the formula to the two classes of renormalizing functions-test functions as in the statement by just using a density argument. 
 \end{proof}

  \begin{cor}\label{cor:trace2} For $f$ satisfying the conditions of Theorem~\ref{theo:trace2} so that its trace $\gamma f$ is well-defined  and under the additional assumption $\gamma f \in L^1(\Gamma_{12};|J| dy dt)$, the following Green formula  
\bear\label{eq:trace-non-renorm}
&&
  \int_{0}^t \!\! \int_{\Sigma_{12}} \gamma \, f \, \varphi \, J \nu \, dyds
+ \left[ \int_\OO f(s,\cdot)  \varphi(s,\cdot) \,  dvdy\right]_0^t
\\
\nonumber
&&= \int_{\UUU} \left\{  f  \, ( \partial_t  \varphi + J \partial_v   \varphi + \frak K \partial_y  \varphi) -   \fraka \partial_{y} f \partial_y\varphi   \right\}  dydvdt
\eear
holds for any  $\varphi \in C^1_c(  \UUU \cup \Gamma_{12})$. 
 \end{cor}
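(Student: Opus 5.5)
We derive \eqref{eq:trace-non-renorm} from the renormalized Green formula \eqref{eq:trace-renorm} of Theorem~\ref{theo:trace2} by approximating the identity map with renormalizing functions in $\BB_1$. The natural choice is $\beta_M(s) := M\,\beta_1(s/M)$, where $\beta_1 \in C^2(\R)$ is concave, satisfies $\beta_1(s) = s$ for $s \le 1$, and is constant (equal to $2$, say) for $s \ge 3$. Then $\beta_M'' \le 0$ and $|\beta_M''| \lesssim M^{-1}{\bf 1}_{M \le s \le 3M} \lesssim (1+s)^{-1}$. Moreover $\beta_M$ is bounded, so $\beta_M \in \BB_1$. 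We also have $\beta_M(s) \nearrow s$, $0 \le \beta'_M \le 1$, $\beta_M'(s) \to 1$, and $|\beta_M(s)| \le s$.

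Fix $\varphi \in C^1_c(\UUU \cup \Gamma_{12})$; by a standard density argument we may first take it in $C^2_c$. The boundary integral in \eqref{eq:trace-renorm} with $\beta_M$ needs care near the degenerate point $(v_F,y_F)$. If $\varphi$ vanishes near that point, \eqref{eq:trace-renorm} applies directly with $\beta_M \in \BB_1$. In general, $\beta_M$ is bounded, and $\beta_M(s) \le C\sqrt{M s}$ (with the constant depending on $M$), so $\beta_M \in \BB_2$. Formula \eqref{eq:trace-renorm} therefore holds for each fixed $M$ with $\varphi \in C^2_c(\UUU\cup\Gamma_{12})$.

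We then pass to the limit $M\to\infty$ term by term.
\begin{itemize}
\item Boundary term: $|\beta_M(\gamma f)\varphi J\nu| \le |\varphi|\,\gamma f\,|J|$, which is integrable by the extra assumption $\gamma f \in L^1(\Gamma_{12};|J|dydt)$. Dominated convergence applies.
\item Time-slice terms: $f \in C([0,T];L^1(\OO))$, and $\beta_M(f(s))\to f(s)$ with domination by $f(s)$.
\item Transport term: $\beta_M(f)\,(\partial_t\varphi + \partial_v(J\varphi) + \partial_y(\frak K\varphi))$ is dominated by $C f(1+y)(1+|\frakb|)$, which lies in $L^1$ by \eqref{eq:trace-boundL1HF} and $\frakb \in L^1(0,T)$.
\item ${\bf F}_1$ term: ${\bf F}^{\beta_M}_1 = \fraka\beta_M'(f)\partial_y f$ with $|\fraka\partial_y f| \lesssim \fraka f + \fraka(\partial_y f)^2/f \in L^1$. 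It converges to $\fraka\partial_y f$.
\item First part of ${\bf F}_0$: $f\beta_M'(f)(1-\partial_vJ) \to f(1-\partial_v J)$ dominatedly.
\end{itemize}

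The main obstacle is the dissipation term $\fraka\beta_M''(f)(\partial_y f)^2\varphi$. Here we use $|\beta_M''(s)| \lesssim M^{-1}{\bf 1}_{s\ge M} \le 3 s^{-1}{\bf 1}_{s \ge M}$. This gives
\[
\int_\UUU \fraka|\beta_M''(f)|(\partial_y f)^2|\varphi| \lesssim \int_\UUU \fraka\frac{(\partial_y f)^2}{f}{\bf 1}_{f\ge M} \to 0,
\]
by dominated convergence, since $\fraka(\partial_y f)^2/f \in L^1(\UUU)$ by \eqref{eq:trace-boundL1HF} and ${\bf 1}_{f\ge M}\to0$ a.e.

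Finally we recombine the limiting terms. The limiting identity contains $f\,\partial_v(J\varphi) + f\,\partial_y(\frak K\varphi) + f(1-\partial_vJ)\varphi$. Using $\partial_y\frak K = -1$, this equals $f(J\partial_v\varphi + \frak K\partial_y\varphi)$. Together with $-\fraka\partial_y f\,\partial_y\varphi$, this is exactly the right-hand side of \eqref{eq:trace-non-renorm}.

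The extension from $C^2_c$ to $C^1_c$ test functions follows by mollifying $\varphi$. Every term of \eqref{eq:trace-non-renorm} involves at most first derivatives of $\varphi$ against $L^1$ weights: $f(1+y)$, $\fraka|\partial_y f|$, $\gamma f|J|$, and $f(s)$. So the identity passes to the limit under $C^1$ convergence of the mollified test functions with uniformly bounded support.
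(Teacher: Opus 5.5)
Your argument is correct and is essentially the paper's proof: the paper also plugs into \eqref{eq:trace-renorm} a family of bounded concave renormalizations approximating the identity, using $\beta_\eps(s) = s/(1+\eps s)$ where you use $M\beta_1(s/M)$. It then passes to the limit by dominated convergence, using \eqref{eq:trace-boundL1HF} for the interior terms and the assumption $\gamma f \in L^1(\Gamma_{12};|J|dydt)$ for the boundary term. Your bound $|\beta_M''(s)| \lesssim s^{-1}{\bf 1}_{s \ge M}$ handles the dissipation term more carefully than the paper's one-line bound on $\beta_\eps''$, which as written should be replaced by $s|\beta_\eps''(s)| \le 8/27$ together with $s\beta_\eps''(s) \to 0$ pointwise.
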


  \begin{proof}[Proof of Corollary~\ref{cor:trace2}] Defining $\beta_\eps(s) := s/(1+\eps s)$ for $\eps > 0$, we observe that $\beta_\eps  \in \BB$,  $\beta_\eps'' \le 0$,   and we may thus write \eqref{eq:trace-renorm} with this choice of renormalizing functions. Using $0 \le \beta_\eps(s) \nearrow s$, $0 \le \beta'_\eps(s) \nearrow 1$, $| \beta''_\eps(s) | \le \eps/s \to 0$ and the estimates satisfied by $f$ and $\gamma f$, we may pass to the limit as $\eps \to 0$ and we obtain \eqref{eq:trace-non-renorm}. 
  \end{proof}

 \medskip
\section{Definition of solution and stability result} 
\label{sec:def&stab}
From the material of the preceding sections, we are now able to formulate the notion of solutions we will deal with. 

\begin{definition}\label{def:weak-sol}
We say that $F $ is a global weak solution to   the Voltage-Conductance kinetic (VCk) equation   \eqref{eq:VCk},  \eqref{eq:defNNF}, \eqref{eq:VCkt=0}, \eqref{eq:VCkBd} 
if $0 \le F \in C([0,\infty);L^1(\OO))$ satisfies the estimates \eqref{eq:APB1} with $k=2$, \eqref{eq:APB2}, \eqref{eq:APB3}, as well as 
\bear
\label{eq:defSol}
\int_\UUU F [ \partial_t \psi + J  \partial_v  \psi + K_F  \partial_{y} \psi] +   \int_\OO F_0  \psi(0,\cdot) =  \int_\UUU a_F \partial_y F \partial_y \psi +  \int_{\Gamma_{12}} \gamma F  \psi J \nu, 
\eear
for any $\psi \in C^1_c(\bar \UUU)$, where the trace function $ \gamma F \in L^1(\Gamma_{12}; |J| dtdy)$ is defined thanks to  Theorem~\ref{theo:trace2} and satisfies \eqref{eq:VCktBd1} and \eqref{eq:VCktBd2} pointwise. 
\end{definition} 
 
Let us make a few comments about this definition. It has already been discussed at the end of Section~\ref{sec:A priori estim} the fact that the interior terms are well defined when the estimates 
\eqref{eq:APB1}, \eqref{eq:APB2}, \eqref{eq:APB3} hold. 
Because of the trace estimate in \eqref{eq:APB1}, we have $\fraka := a_* +  \cc^2 \NN(\gamma_+ F) \in L^1(0,T)$  and $\frakb := y_* + \cc \NN(\gamma_+ F) \in L^1(0,T)$, and  
we may apply the trace result formulated in  Corollary~\ref{cor:trace2}  in order to link the function $F$ and its trace function $\gamma F$ on $\Gamma_{12}$ through the Green formula \eqref{eq:trace-non-renorm}. 
It is worth emphasizing that 

the  boundary condition  \eqref{eq:VCktBd0} is encapsulated in the set of test function $\psi \in C^1_c(\bar \UUU)$, just as it is the case for the usual formulation of Neumann condition for elliptic equations in a not necessary smooth domain.

\smallskip
The above definition of solution is suitable for our purpose as we see now by stating and proving the corresponding weak stability result, in the spirit of the DiPerna-Lions theory for kinetic equations.

\begin{theo}\label{theo:stability} 
Let the connectivity parameter $\cc \ge 0$ arbitrarily fixed and let us consider a sequence  $(F_n)$ of solutions to the Voltage-Conductance kinetic (VCk) equation   \eqref{eq:VCk},  \eqref{eq:defNNF}, \eqref{eq:VCkt=0}, \eqref{eq:VCktBd0}, \eqref{eq:VCktBd1},  \eqref{eq:VCktBd2} which satisfies uniformly (in $n$) the natural bound \eqref{eq:APB1}, \eqref{eq:APB2}, \eqref{eq:APB3}.
Up to the extraction of a subsequence, the sequence  $(F_n)$ converges to a function $F$ and this one  satisfies the same natural bound and is a solution to the Voltage-Conductance kinetic (VCk) equation
in the sense of Definition~\ref{def:weak-sol}.
\end{theo}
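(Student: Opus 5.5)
We use Corollary~\ref{cor:compactness}: up to a subsequence, $F_n \wto F$ weakly in $L^1(\UUU)$ and $\gamma_+ F_n \wto g$ weakly in $L^1(\Gamma_2^+; J_+ dydt)$. Since $J_+\langle y\rangle$ controls $J_+$, we get
$$
\NN_n := \NN(\gamma_+F_n) \wto \NN := \NN(g)
$$
weakly in $L^1(0,T)$. Up to a further subsequence we also get $\NN_n \to \NN$ in weak-$*$ measures, and the convexity/lsc arguments give the natural bounds \eqref{eq:APB1}, \eqref{eq:APB2}, \eqref{eq:APB3} for $F$ and $g$. This uses weak lsc of $\int a\II$ under joint convergence $(a_n,F_n)$, via the convex formulation $\II=\sup$ of linear functionals, applied to $\int a_n \II(F_n) = \int (\partial_y(\sqrt{a_n}F_n))^2/(a_nF_n)$-type jointly convex expressions.

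\textbf{Time compactness of $\NN_n$.} The products $\NN_n \int F_n\partial_y\psi$ and $a_{F_n}\Psi_n$ are weak times weak, so the key step is strong time compactness of $\int_\OO F_n\phi\,dvdy$ together with some equi-integrability of $\NN_n$. For each fixed $\phi\in C^2_c(\bar\OO)$ (compatible with the boundary condition on $\Sigma_0$), the weak formulation \eqref{eq:defSol} gives
$$
\frac{d}{dt}\int F_n\phi = \int F_n (J\partial_v\phi+K_{F_n}\partial_y\phi) - a_{F_n}\Psi_n(\phi) - \int \gamma F_n\phi J\nu ,
$$
which is bounded in $L^1(0,T)$ uniformly in $n$. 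Hence, by Helly / Aubin--Lions, $\int F_n(t)\phi \to \int F(t)\phi$ uniformly in $t$ (for a subsequence, then by density for all $\phi$). Consequently $F_n(t)\wto F(t)$ weakly in $L^1(\OO)$ for every $t$, and $F\in C([0,T];L^1_w)$.

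\textbf{Passing to the limit in the products.} This is the main obstacle. For the term $\cc\NN_n\int F_n\partial_y\psi$, we have $\NN_n$ weakly convergent in $L^1(0,T)$ and the factor $\int F_n\partial_y\psi$ uniformly convergent and bounded, so the product converges; this is the type of elementary lemma (weak $L^1$ times uniform convergence) we would isolate. For $\int a_{F_n}\Psi_n$, we write $\Psi_n = -\int F_n\partial_{yy}\psi$ after integration by parts in $y$, with the boundary term at $y=0$ vanishing if $\partial_y\psi=0$ on $\Sigma_0$; otherwise we keep it and treat it with the Neumann condition. Then $\Psi_n$ also converges uniformly in $t$, and the same lemma applies. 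Thus the difficulty is only to ensure that the test class (with $\partial_y\psi(\cdot,0)$ arbitrary) is handled; we handle it by the uniform $L^2$ bound on $\sqrt{a_n}\Psi_n$ combined with the strong convergence of $\int F_n\partial_y\psi$, splitting $\psi$ into a part with $\partial_y\psi=0$ near $y=0$ plus a remainder.

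\textbf{Identification of the trace.} It remains to show that the limit trace is $\gamma F$ and satisfies \eqref{eq:VCktBd1}--\eqref{eq:VCktBd2}. The boundary terms $\int_{\Gamma_{12}}\gamma F_n\psi J\nu$ converge to $\int g\psi J\nu$ on $\Gamma_2^+$ and, by the reset condition \eqref{eq:VCktBd2}, $J\gamma F_n$ on $\Gamma_2^-$ converges to the transported $g$. Meanwhile $\gamma F_n=0$ on $\Gamma_1$. Passing to the limit yields \eqref{eq:defSol} with the boundary datum built from $g$. Finally, since $F$ satisfies the linear equation \eqref{eq:kFP-lin} with $\fraka=a_*+\cc^2\NN$, $\frakb=y_*+\cc\NN$, Theorem~\ref{theo:trace2} and Corollary~\ref{cor:trace2} provide a trace $\gamma F$ satisfying Green's formula. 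Comparing with the limit identity for test functions supported near $\Gamma_{12}$ gives $\gamma F=g$ a.e. on $\Gamma_2^+$ and the boundary relations on $\Gamma_1,\Gamma_2^-$. Hence $\NN_F=\NN(g)$, consistently with the coefficients, and $F$ is a solution in the sense of Definition~\ref{def:weak-sol}. Continuity $F\in C([0,\infty);L^1(\OO))$ follows from Theorem~\ref{theo:trace2}.
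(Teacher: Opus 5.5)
Your overall architecture matches the paper's: weak compactness from Corollary~\ref{cor:compactness}, time compactness of spatial averages, passage to the limit in the two nonlinear products, lower semicontinuity of the bounds, and identification of $g$ with $\gamma F$ via Theorem~\ref{theo:trace2}. The difference is in the key diffusion product $\int_0^T a_{F_n}\Psi_n$, $\Psi_n=\int_\OO \partial_y F_n\,\partial_y\psi$. The paper first proves that $\Psi_n$ converges strongly in $L^1(0,T)$ for every $\psi\in C^1_c(\bar\UUU)$, using interior test functions and then an $L^2$ approximation controlled by the Fisher information and entropy bounds. It then needs Lemma~\ref{lem:AB2}, precisely because $\Psi_n$ is not bounded in $L^\infty$ and only $\int_0^T a_{F_n}\Psi_n^2\le C$ is available. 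You instead split the test function. On the part $\psi_1$ that is flat near $y=0$, an integration by parts gives $\Psi_n=-\int_\OO F_n\partial^2_{yy}\psi_1$, which is bounded and convergent, so the elementary weak-times-bounded product lemma suffices; the boundary layer near $\Sigma_0$ becomes an error term. Your route dispenses with Lemma~\ref{lem:AB2}. The paper's route yields the strong compactness \eqref{eq:compactnessStrongMean} as a standalone statement, which it reuses in Section~\ref{sec:truncated-pb}.

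Two justifications must be repaired.

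First, an $L^1(0,T)$ bound on $\frac{d}{dt}\int_\OO F_n\phi$ only gives, via Helly, pointwise and $L^1(0,T)$ convergence, not uniform convergence. Uniform convergence is nevertheless true, because this derivative is equi-integrable:
\begin{itemize}
\item the $\cc\NN_n$ term and the boundary term are $\lesssim\NN_n$ (use $\gamma_1F_n=0$ and $J_-\gamma_-F_n=J_+\gamma_+F_n$);
\item $(\NN_n)$ is equi-integrable by Corollary~\ref{cor:compactness};
\item $\int_E a_{F_n}|\Psi_n|\le(\int_E a_{F_n})^{1/2}(\int_0^T a_{F_n}\Psi_n^2)^{1/2}$.
\end{itemize}
Alternatively, keep a.e. convergence and use Remark~\ref{rem:AB2}.

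Second, the uniform bound on $\int_0^T a_{F_n}\Psi_n^2$ alone does not make the remainder small. Take for instance $\psi_2:=\chi(y/\delta)(\psi-\psi(\cdot,0))$ with $\chi=1$ on $[0,1]$ and $\chi=0$ on $[2,\infty)$. Then $\psi_1=\psi-\psi_2$ is flat for $y<\delta$, $|\partial_y\psi_2|\lesssim\|\partial_y\psi\|_{L^\infty}{\bf 1}_{y<2\delta}$, and
\[
\int_0^T a_{F_n}\Bigl|\int_\OO\partial_yF_n\,\partial_y\psi_2\Bigr|\,dt\lesssim\Bigl(\int_0^T a_{F_n}\II(F_n)\,dt\Bigr)^{1/2}\Bigl(\int_0^T a_{F_n}\int_{\{y<2\delta\}}F_n\,dvdy\,dt\Bigr)^{1/2}.
\]
The last factor tends to $0$ as $\delta\to0$ uniformly in $n$, because the uniform entropy bound gives $\sup_{n,t}\int_{\{y<2\delta\}}F_n(t)\to0$.

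Drop the suggestion to treat the $y=0$ boundary term ``with the Neumann condition'': no trace of $F_n$ on $\Sigma_0$ is controlled. Finally, your lower semicontinuity argument for $\int a\,\II$ (write $a_{F_n}\II(F_n)=\II(a_{F_n}F_n)$ and use $a_{F_n}F_n\wto\fraka_{\bar\NN}F$) relies on the product convergence. It must therefore come after the time-compactness step, not before it as in your write-up.
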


During  the proof, we will need a key auxiliary convergence result that we establish first. We introduce the truncation function $T_R$ defined by 
$$
T_R(s) := - R \hbox{ if } s  < -R, \  T_R(s) := s \hbox{ if } s \in [-R,R], \ T_R(s) := R \hbox{ if } s > R, 
$$
and the function $T^c_R(s) := s - T_R(s)$. 

\begin{lem}\label{lem:AB2} Consider two sequences $(A_n)$ and $(B_n)$ on $L^1(0,T)$ such that  
\bean
A_n \wto A \hbox{ weakly  in } L^1(0,T),  \quad B_n \to B  \hbox{ strongly  in } L^1(0,T)
, \quad  \int_0^T |A_n| B_n^2 \le C.
\eean
Then 
\beqn\label{eq:lemaB2}
A_n B_n \wto AB  \hbox{ weakly  in } L^1(0,T).
\eeqn
\end{lem}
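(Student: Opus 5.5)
We test the convergence against an arbitrary $\varphi \in L^\infty(0,T)$. The plan is to split $B_n$ with the truncation $T_R$, so that the bounded part can be handled by weak--strong convergence and the unbounded part by the weighted bound $\int |A_n| B_n^2 \le C$. Concretely, we write
$$
A_n B_n = A_n T_R(B_n) + A_n T^c_R(B_n),
$$
and we show that the second term is small uniformly in $n$, while the first one converges weakly to $A\, T_R(B)$ for each fixed $R$.

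\emph{The tail term.} On the set where $T^c_R(B_n) \neq 0$ we have $|B_n| > R$, and there $|T^c_R(B_n)| \le |B_n| \le B_n^2/R$. Hence
$$
\int_0^T |A_n|\, |T^c_R(B_n)| \le \frac1R \int_0^T |A_n| B_n^2 \le \frac CR,
$$
uniformly in $n$. For the limit we need the analogous bound $\int |A| B^2 \le C$. Strong convergence of $B_n$ gives a.e. convergence along a subsequence, so I would get it from weak lower semicontinuity: for fixed $M$, the sequence $\min(B_n^2,M)$ converges to $\min(B^2,M)$ a.e. and stays bounded. By the Egorov argument below, $\int |A_n| \min(B_n^2,M)$ can be compared with $\int |A| \min(B^2,M)$, using $|A| \le \liminf |A_n|$ in the weak sense; more precisely, $\int \xi |A| \le \liminf \int \xi |A_n|$ for $0 \le \xi \in L^\infty$ follows by testing $A_n$ against $\xi\,\mathrm{sign}(A)$. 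Letting $M \to \infty$ then gives $\int |A| B^2 \le C$ and therefore $\int |A|\,|T^c_R(B)| \le C/R$.

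\emph{The truncated term.} For fixed $R$, $T_R(B_n) \to T_R(B)$ in $L^1$, hence in measure, and the sequence is bounded by $R$. Since $(A_n)$ converges weakly in $L^1$, it is equi-integrable by Dunford--Pettis. We write
$$
A_n T_R(B_n)\varphi - A\, T_R(B)\varphi = A_n \bigl(T_R(B_n) - T_R(B)\bigr)\varphi + (A_n - A)\, T_R(B)\varphi .
$$
The second piece tends to zero in integral because $T_R(B)\varphi \in L^\infty$. For the first piece, fix $\eta > 0$ and choose a set $E$ of small measure outside which $|T_R(B_n) - T_R(B)| \le \eta$ for large $n$; this is possible by Egorov along a subsequence, or directly by convergence in measure. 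On $E$ the integral is bounded by $2R\|\varphi\|_\infty \sup_n \int_E |A_n|$, which is small by equi-integrability. Off $E$ it is bounded by $\eta \|\varphi\|_\infty \sup_n \|A_n\|_{L^1}$.

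Combining the three estimates and letting $n \to \infty$ and then $R \to \infty$ gives $\int A_n B_n \varphi \to \int A B \varphi$. The whole sequence converges, not only a subsequence, because convergence in measure avoids any extraction. I expect the main obstacle to be the uniform control of the truncated term, which rests on the equi-integrability of $(A_n)$ combined with convergence in measure of $(B_n)$.
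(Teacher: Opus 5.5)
Your proof is correct and follows essentially the same route as the paper. You use the same $T_R$ splitting, the same $C/R$ tail bound, and the same lower-semicontinuity argument for $\int_0^T |A|B^2 \le C$; you test against $\xi\,\mathrm{sign}(A)$ directly where the paper extracts a weak limit $A^* \ge |A|$ of $(|A_n|)$, and you prove by hand, via equi-integrability and convergence in measure, the Egorov-type product result that the paper simply quotes in Remark~\ref{rem:AB2}.
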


\begin{rem}\label{rem:AB2} 
This result is a small  improvement of the  result claiming that \eqref{eq:lemaB2} holds when 
\bean
A_n \wto A \hbox{ weakly  in } L^1(0,T),  \quad B_n \to B  \hbox{ a.e. in } (0,T)
, \quad  (B_n)  \hbox{ bounded  in } L^\infty(0,T), 
\eean
which is a classical consequence of the Egorov Lemma, and thus also a direct consequence of  Lemma~\ref{lem:AB2}. 
\end{rem}

\begin{proof}[Proof of Lemma~\ref{lem:AB2}]
We fix $\psi \in L^\infty(0,T)$ and we split
 $$
 \int_0^T A_n B_n \psi   =  \int_0^T A_n T_R(B_n) \psi +  \int_0^T A_n T^c_R(B_n) \psi.
 $$
 For the first term, using the convergence result recalled in Remark~\ref{rem:AB2}, 
 we straightforwardly have 
\beqn\label{lem:AB-cvgce1}
 \int_0^T A_n T_R(B_n) \psi \to  \int_0^T A T_R(B) \psi. 
 \eeqn 
 From the reverse sense of the Dunford-Pettis Lemma, the sequence $(A_n)$ is bounded in $L^1$ and uniformly equi-integrable, so does is the sequence
$(|A_n|)$. Thanks to the Dunford-Pettis Lemma, we thus deduce that there exists $A^* \in L^1(0,T)$ such that, up to the extraction of a subsequence,  $ |A_n| \wto A^*$  weakly  in $L^1(0,T)$, 
so that $A^* \ge |A|$ (what we find by testing the sequence $(A_n)$ with the functions $\phi \, \hbox{sign}\, A$, $0 \le \phi \in L^\infty(0,T)$). 
Similarly as for \eqref{lem:AB-cvgce1}, we have 
$$
  \int_0^T |A_n| T_R(B^2_n) \to   \int_0^T A^* T_R(B^2),  
 $$
 from what we deduce 
$$
 \int_0^T |A| B^2  \le \lim_{R \to \infty}  \int_0^T A^* T_R(B^2)  \le  C. 
$$
On the other hand, for $(A',B') = (A_n,B_n)$ or $(A',B') = (A,B)$, we have  
  \beqn\label{lem:AB-cvgce2}
 \int_0^T A' T^c_R(B')  \psi \le  \int_0^T |A'| |B'| {\bf 1}_{|B'| \ge R} \| \psi \|_{L^\infty} \le {C \over R} \| \psi \|_{L^\infty}. 
\eeqn
  Using the same splitting  
   $$
 \int_0^T A B \psi   =  \int_0^T A T_R(B) \psi +  \int_0^T A T^c_R(B) \psi
 $$
 and the two pieces of information \eqref{lem:AB-cvgce1} and \eqref{lem:AB-cvgce2}, we immediately conclude. 
%
%
%
%
\end{proof}

 \begin{proof}[Proof of Theorem~\ref{theo:stability}]

 We consider a sequence of functions $(F_n)$ which satisfies the estimates \eqref{eq:APB1}, \eqref{eq:APB2}, \eqref{eq:APB3} uniformly in $n$ and such that $F_n$ is a solution to the VCk equation
in the sense of Definition~\ref{def:weak-sol}, that is 
\beqn
\label{eq:defSolFn}
\int_\UUU F_n [ \partial_t \psi + J  \partial_v  \psi + \frak K_{\NN_n}   \partial_{y} \psi] +   \int_\OO F_0  \psi(0,\cdot) =  \int_\UUU \fraka_{\NN_n}  \partial_y F_n \partial_y \psi +  \int_{\Gamma_{12}}  (\gamma F_n) J\nu \psi, 
\eeqn
for any $\psi \in C^1_c(\bar \UUU)$ and the trace function $ \gamma F_n$ satisfies \eqref{eq:VCktBd1} and \eqref{eq:VCktBd2} pointwise, that is 
\beqn\label{eq:bdycond-Fn}
 \gamma_1 F_n = 0  \ \hbox{ on } \ \Gamma_1, \quad J_- \gamma_- F_n = J_+ \gamma_+ F_n \ \hbox{ on } \ \Gamma_2^-. 
\eeqn
For a given function $0 \le \frak N \in L^1(0,T)$, we define 

 $$
\frak K_{\frak N} :=  y_* + \cc\, \frak N - y, 
\quad 
\fraka_{\frak N} :=  a_* + \cc^2 \frak N , 
\qquad
\NN_n := \NN (\gamma_+ F_n), 
$$
so that  $\frak K _{\NN_n} = K_{F_n}$ and  $\fraka_{\NN_n} = a_{F_n}$.

\smallskip
We split the proof into four  steps. 

\smallskip
\smallskip\noindent
{\sl Step 1. Convergences.}

\smallskip\smallskip\noindent
$\bullet$  
 Thanks to the Dunford-Pettis Lemma as formulated in Corollary~\ref{cor:compactness}-(1), up to the extraction of a subsequence, we have 
\bear\label{eq:theo-stab-cvgce1}
 F_n \wto F  \quad \hbox{weakly in } L^1(\UUU; dydvdt), 
\eear
for a function $0 \le F \in L^1(\UUU)$. The estimates \eqref{eq:APB1}, \eqref{eq:APB2}, \eqref{eq:APB3} and the  Corollary~\ref{cor:compactness}-(2), imply that 
the sequence $(\gamma_+ F_n)$ is weakly compact in $L^1(\Gamma_{2}^+; |J| dydt)$. Together with the boundary conditions  \eqref{eq:bdycond-Fn}, we deduce that 
the sequence $(\gamma F_n)$ is weakly compact in $L^1(\Gamma_{12}; |J| dydt)$. There thus exists $0 \le \bar\gamma \in L^1(\Gamma_{12}; |J| dydt)$ such that, up to the extraction of a subsequence, 
\beqn
\label{eq:theo-stab-cvgce2}
  \gamma F_n \wto \bar\gamma  \quad \hbox{weakly in } L^1(\Gamma_{12}; |J| dydt). 
\eeqn
Together with   \eqref{eq:bdycond-Fn} and the very definition \eqref{eq:defNNF} of $\NN$, we deduce that 
\bear\label{eq:theo-stab-cvgce12}
&&\bar\gamma_1 = 0   \  \hbox{on} \ \Gamma_{1}, 
\quad J_- \bar\gamma_- = J_+ \bar\gamma_+  \  \hbox{on} \ \Gamma_{2}^-, 
 \\
 &&\label{eq:CVGCE-Nn}
 \NN_n   \wto  \NN(\bar\gamma_+) =:   \bar \NN  \quad \hbox{weakly in } L^1(0,T),
 \eear
 where we have set $\bar\gamma_1 := \bar\gamma {\bf 1}_{\Gamma_1}$, $\bar\gamma_\pm := \bar\gamma {\bf 1}_{\Gamma_2^\pm}$.

 \Black
 
 \smallskip\smallskip\noindent
$\bullet$ We fix $\phi \in C_c^3(\UUU)$   and we denote 
\beqn\label{eq:defPhin}
\Phi_n := \int_\OO  \phi  \, \partial_y F_n. 
\eeqn
Observing that, by performing one integration  by part, 
$$
\Phi_n = - \int_\OO  F_n  \, \partial_y \phi ,
$$
we see that the sequence $(\Phi_n)$ is clearly bounded in $L^\infty(0,T)$ from the mass conservation. 
On the other hand, we compute 
\bean
{d \over dt} \Phi_n 
&=& -  {d \over dt}  \int_\OO  F_n  \partial_{y} \phi  
\\
&=& -  \int_\OO   \partial_t F_n  \partial_{y} \phi  -  \int_\OO F_n  \partial^2_{ty} \phi 
\\
&=& 
\int_\OO   F_n  [ J \partial^2_{vy} \phi - \frak K_{\NN_n} \partial^2_{yy} \phi - \fraka_{\NN_n} \partial^3_{yyy} \phi - \partial^2_{ty} \phi]  , 
\eean
where the RHS is bounded in $L^1(0,T)$ because it is the case for $(\NN_n)$ and because of the mass conservation again.
From the Rellich theorem, we deduce that $(\Phi_n)$ is relatively compact in $L^1(0,T)$, and from the fact that $\partial_y F_n \wto \partial_y F$ in $\DD'(\UUU)$, we finally conclude to
\beqn\label{eq:compactnessStrongMean}
\int_\OO   \phi  \, \partial_y F_n \to \int_\OO   \phi  \, \partial_y F
\ \hbox{ strongly } \ L^1(0,T).
\eeqn

 \smallskip\smallskip\noindent
$\bullet$ We now  assume $ \phi \in C_c(\bar\UUU)$ and we still define $\Phi_n$ thanks to \eqref{eq:defPhin}.
We may  introduce an approximation family $(\phi_\eps)$ of $C^3_c(\UUU)$ such that 
$\|\phi_\eps \|_{L^\infty} \le \|\phi \|_{L^\infty} $ 
  and 
  $\|\phi - \phi_\eps \|_{L^2}      \le \eps$, for any $\eps > 0$. 
With obvious notations, we have already established that the associated sequence $(\Phi_{n,\eps})_{n \ge 1}$ is relatively compact in $L^1(0,T)$ for any $\eps > 0$. On the other hand, we compute 
\bean
\| \Phi_{n,\eps} - \Phi_n \|_{L^1(0,T)} 
&\le& \int_\UUU |\partial_y F_n (\phi - \phi_\eps) | 
\\
&\le& \Bigl( \int_\UUU {(\partial_y F_n)^2 \over F_n} \Bigr)^{1/2} \Bigl( \int_\UUU F_n (\phi - \phi_\eps)^2  \Bigr)^{1/2} 
\\
&\le&  \Bigr( \int_0^T \II(F_n) \Bigr)^{1/2} \Bigl( M \|\phi - \phi_\eps \|_{L^2}^2  +  4 { \| \phi\|_{L^\infty}^2 \over \log M} \int_\UUU F_n (\log F_n)_+ \Bigr)^{1/2}, 
\eean
for any $n,M \ge 1$ and $\eps >0$, where we have used the Cauchy-Schwarz inequality in the second line and the classical estimate 
$$
s = s \wedge M + (s-M)_+ \le M + {1 \over \log M} s (\log s)_+, \quad \forall \, s \ge 0, \ M \ge 1, 
$$
in the last line. 
 Because of the uniform in $n$ Fisher information bound  \eqref{eq:APB2} (recall that $\fraka_{\NN_n} \ge a_* > 0$) and
the uniform in $n$ entropy bound \eqref{eq:APB2},
that  precisely means that the sequence $(\Phi_n)$ is precompact in $L^1(0,T)$, from what we classically deduce \eqref{eq:compactnessStrongMean} again. 

 \smallskip\smallskip\noindent
$\bullet$ For any $\phi \in C_c(\bar \UUU)$, using an approximation step by compact supported functions, we also prove similarly as above that 
\beqn\label{eq:compactnessStrongMeanBIS} 
\int_\OO  F_n  \phi \to \int_\OO F \phi
\ \hbox{ strongly } \ L^1(0,T), \quad \int_\OO  F_n  \phi  \ \hbox{ bounded in  } \ L^\infty(0,T).
\eeqn

\smallskip\noindent
{\sl Step 2. Passing to the limit in the equation.} For $\psi \in C^1_c(\bar \UUU)$, we may write 
$$
\int_\UUU F_n  \frak K_{\NN_n}  \partial_{y} \psi = \int_\UUU F_n (y_*  - y) \partial_y \psi +  \int_0^T  \cc \NN_n \Psi_n, 
\quad \Psi_n := \int_\OO F_n  \partial_y \psi. 
$$
From now on, we assume $\cc  > 0$, the proof in the linear case $\cc=0$ being simpler. 
Using \eqref{eq:theo-stab-cvgce1}, we may pass to the limit in the first term in the above splitting. 
Using  \eqref{eq:CVGCE-Nn}, \eqref{eq:compactnessStrongMeanBIS} with $\phi := \partial_y \psi$ and 
Remark~\ref{rem:AB2} with $A_n :=  \cc \NN_n$ and $B_n := \Psi_n$, we may pass to the limit  in the second term in the above splitting. Altogether, we obtain 
$$
\int_\UUU F_n  \frak K_{\NN_n}   \partial_{y} \psi  \to 
\int_\UUU F  \frak K_{\bar \NN}  \partial_{y} \psi.
$$
We may also write 
$$
\int_\UUU \fraka_{\NN_n}  \partial_y F_n \partial_y \psi= \int_0^T  A_n B_n, 
\quad
A_n := \cc^2 \NN_n, 
\quad B_n := \int_\OO \partial_y F_n \partial_y \psi. 
$$
Thanks to the Cauchy-Schwarz inequality, we notice that 
$$
\Bigl( \int_\OO \partial_y F_n \partial_y \psi \Bigr)^2  \le  \int_\OO {|\partial_y F_n|^2 \over F_n} \int_\OO  F_n | \partial_y\psi|^2, 
$$
so that 
\beqn\label{eq:borneAB2}
\int_0^T A_n B^2_n \le \| \partial_y \psi \|_{L^\infty}^2 \| F_0 \|_{L^1}   \int_\UUU a_{F_n}  {|\partial_y F_n|^2 \over F_n} \le C.
\eeqn
Using \eqref{eq:CVGCE-Nn}, \eqref{eq:compactnessStrongMean} with $\phi := \partial_y \psi$, \eqref{eq:borneAB2} and Lemma~\ref{lem:AB2},  
 we obtain again $$
\int_\UUU \fraka_{\NN_n}  \partial_y F_n \partial_y \psi \to \int_\UUU  \fraka_{\bar\NN}  \partial_y F \partial_y \psi.
$$
We now easily pass to the limit in \eqref{eq:defSolFn}, and for any $\psi \in C^1_c(\bar \UUU)$, we get
\bear
\label{eq:defSolBIS}
\int_\UUU F [ \partial_t \psi + J  \partial_v  \psi +  \frak K_{\bar\NN}  \partial_{y} \psi] +   \int_\OO F_0  \psi(0,\cdot) =  \int_\UUU \fraka_{\bar\NN}    \partial_y F \partial_y \psi +  \int_{\Gamma_{12}} J \nu  \bar\gamma  \psi. 
\eear

\smallskip\noindent
{\sl Step 3. Passing to the limit in the estimates.} First, from the estimates  \eqref{eq:APB1}, \eqref{eq:APB2} uniformly satisfied by $(F_n)$ and the convergences \eqref{eq:theo-stab-cvgce1}, \eqref{eq:theo-stab-cvgce2}, we immediately deduce that 
\beqn\label{eq:theo-stab-step3-1}
 \sup_{[0,T]} \int_\OO F \langle y \rangle^k dydv + \int_0^T\int_0^\infty  (J_+ \bar\gamma_+)  \langle y \rangle^{k-1} dy dt \le C_T
\eeqn
and 
\beqn\label{eq:theo-stab-step3-2}
 \sup_{[0,T]} \int_\OO F (\log F)_+ dydv +  \int_0^T\int_0^\infty   (J_+ \bar\gamma_+ ) \log {J_- \over J_+}
  dydt  \le C_T, 
\eeqn
for a constant $C_T = C_T(F_0)$, where we have just used that the above quantities are weakly lsc. We recover the $F(\log F)_-$ part for completing \eqref{eq:theo-stab-step3-2}
by using the same trick as at the end of the proof of Proposition~\ref{prop:APB2}.
We now establish 
\beqn\label{eq:theo-stab-step3-3}
\int_0^T \fraka_{\bar \NN}  \II(F) dt \le C_T.
\eeqn
We classically know, see for instance \cite[Lemma~3.5]{MR3188710}, that 
$$
\II (g) = \sup_{\psi \in C_c^2(\OO;\R)} \II_\psi (g), \quad \II_\psi (g) :=  \int_\OO (- \frac14 \psi^2 - \partial_y \psi) g,
$$
and we know from \eqref{eq:compactnessStrongMeanBIS} that 
$$
\II_\psi(F_n) \to \II_\psi(F) \quad\hbox{in}\quad L^1(0,T). 
$$
 We thus deduce 
\bean
 \int_0^T \fraka_{\bar \NN} \, ( \II_\psi(F) \wedge \ell)_+  
  &=&  \lim  \int_0^T \fraka_{ \NN_n}  ( \II_\psi(F_n) \wedge \ell)_+ 
\\
  &\le&  \liminf  \int_0^T \fraka_{ \NN_n}  \II(F_n) \le C_T, 
  \eean
for any $\ell \ge 0$ and $\psi \in C_c^2(\OO;\R)$, from what \eqref{eq:theo-stab-step3-3} follows by taking the supremum on $\psi$ and letting $\ell \to \infty$.

\smallskip\noindent
{\sl Step 4. Conclusion.} Because $F$ satisfies the estimates \eqref{eq:theo-stab-step3-1}, \eqref{eq:theo-stab-step3-2}, \eqref{eq:theo-stab-step3-3} and the equation \eqref{eq:defSolBIS}, which thus in particular holds in the distributional sense, we may apply Theorem~\ref{theo:trace2} which tells us that $F \in C([0,T];L^1(\OO))$ and there exists $\gamma F \in \Lloc^1(\Gamma_{12};J^2 dydt)$ such that 
\bear\label{eq:theo-stab-step4}
&&
  \int_{0}^t \!\! \int_{\Sigma_{12}} \gamma F \, \varphi \, J \nu \, dyds
+ \left[ \int_\OO F(s,\cdot)  \varphi(s,\cdot) \,  dvdy\right]_0^t
\\
\nonumber
&&= \int_{\UUU} \left\{  F  \, ( \partial_t  \varphi + J \partial_v   \varphi + \frak K_{\bar\NN}  \partial_y  \varphi) - \fraka_{\bar\NN}  \partial_{y} F \partial_y\varphi   \right\}  dydvdt
\eear
holds for any  $\varphi \in C^1_c(  \UUU \cup \Gamma_1 \cup \Gamma_2)$.  Comparing with \eqref{eq:defSolBIS}, we deduce that $\gamma F = \bar \gamma$ on $\Gamma_{12}$. 
From \eqref{eq:CVGCE-Nn}, we deduce that $\bar\NN = \NN(\gamma_+ F)$, so that $ \frak K_{\bar\NN} = K_F$, $\fraka_{\bar\NN}  = a_F$, and thus coming back to  \eqref{eq:defSolBIS}, we see that $F$ satisfies \eqref{eq:defSol}. 
From \eqref{eq:theo-stab-cvgce12}, we deduce that $F$ satisfies the boundary conditions \eqref{eq:VCktBd1} and  \eqref{eq:VCktBd2}. 
From \eqref{eq:theo-stab-step3-1}, \eqref{eq:theo-stab-step3-2} and \eqref{eq:theo-stab-step3-3}, we deduce that $F$  satisfies the estimates   \eqref{eq:APB1}, \eqref{eq:APB2} and \eqref{eq:APB3}.
  We have thus  $\gamma F \in L^1(\Gamma_{12},|J|dydt)$ and from  Corollary~\ref{cor:trace2}, equation \eqref{eq:theo-stab-step4} holds for any $\varphi \in   C^1_c(\bar \UUU)$.  
  \end{proof}

 \medskip
\section{Proof of Theorem~\ref{theo-Exists}} 
 \label{sec:existence}
 The whole section is dedicated to the existence Theorem~\ref{theo-Exists}, each subsection corresponding to a different step. 
 
\subsection{About the linear problem} 
\label{sec:LinearPb}

We recall and make a bit more precise some material developed in \cite{CFS+SM}. 
 We consider the linear  VCk equation
 \beqn\label{eq:VCklinear}
\partial_t f  = \LLL_{\fraka,\frak K} f := -  \partial_v(Jf) -  \partial_y(  \frak K f) +  \fraka  \partial_{yy}^2 f
\quad\text{in}\quad (0,\infty) \times \OO, 
\eeqn
where, for some $a^* > \max(a_*,y_*)$, 
\bean
 \frak K :=  \frakb - y, \quad  \fraka, \frakb  \in L^\infty(0,\infty), \quad  y_* \le \frakb \le a^*, \quad a_* \le  \fraka \le a^*, 
\eean 
and the evolution equation is complemented with the boundary conditions \eqref{eq:VCktBd1}, \eqref{eq:VCktBd2} and \eqref{eq:VCktBd1-linear}, 
that we will sometime  summarize   with the shorthand 
\beqn\label{eq:LinearVCkBd}
\RRR_{\fraka, \frak K} \gamma f = 0 \quad\text{on}\quad \Gamma.
 \eeqn
 
 Let us introduce some notations. For a given polynomial weight function $\omega(y) := (1+y)^k$, $ k > 0$, we define the weighted Lebesgue space $L^p_k$ associated to the norm
$$
\| f \|_{L^p_k} := \| f \omega \|_{L^p}.
$$
In the sequel, we will choose $k > 5/2$ in such a way that $L^2_k \subset L^1_2$. 
We also  denote $d\xi^\omega_1 := \omega^2  |J| dtdy$  and $d\xi^\omega_2 := \omega^2  J^2 /\langle y \rangle^2 dtdy$  the Borel measures on the boundary $\Gamma_{12}$.  We define $\BB_3$ as the class of functions $\beta \in C^2(\R)$ such that $\beta'' \in L^\infty(\R)$ and $\BB_4 \subset \BB_3$ as the class of functions $\beta \in C^2(\R)$ such that $\beta' \in W^{1,\infty}(\R)$.
  
 \smallskip
We start by stating a trace result adapted to the above linear framework. 

\begin{theo}\label{theo:trace3}
For any polynomial weight function $\omega(y) := (1+y)^k$,  $k > 5/2$, and any solution $f \in L^\infty(0,T;L^2_k) \cap L^2((0,T) \times (0,v_F); H^1(\R_+))$, $\forall \, T > 0$,  to the linear  VCk equation \eqref{eq:VCklinear}, \eqref{eq:LinearVCkBd}, there exist a trace function $\gamma f \in L^2(\Gamma_{12}; d\xi^\omega_2)$  and a family of trace functions $\gamma_t f \in L^2_\omega(\OO)  $ such that
\bear
\label{eq:theo:existL2-1}
&&  \int_\OO \beta(\gamma_T  f ) \psi (T,\cdot)  + \int_{\Gamma} J  \beta(\gamma f)  \psi  + \int_\UUU \beta(f) [ \partial_t \psi +  \partial_v (J \psi)]  - \int_\UUU  (\partial_vJ)f \beta'(f) \psi 
\\ \nonumber
&&\quad = -  \int_\UUU  (\frak K f - \fraka \partial_{y} f ) \partial_y  (\beta'(f)\psi)   +  \int_\OO \beta(\gamma_0 f ) \psi (0, \cdot) , 
\eear
 for any $\beta \in \BB_3$ and $\psi \in C^1_c(\UUU \cup \Gamma_{1} \cup \Gamma_2)$ and for any $\beta \in \BB_4$ and  $\psi \in C^1_c(\UUU \cup \Gamma_{12})$.  
Furthermore, $\gamma_t f = f(t,\cdot)$ a.e. on $(0,T)$ and $t \mapsto \gamma_t f \in C([0,T];\Lloc^2(\OO))$. \end{theo}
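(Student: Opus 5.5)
Theorem~\ref{theo:trace3} will be obtained by reducing to the bounded-coefficient framework of Theorem~\ref{theo:trace1} and then doing weighted $L^2$ bookkeeping. In the linear setting $\fraka,\frakb \in L^\infty$, so the equation can be rewritten as
$$
\partial_t f + J\partial_v f = G_0 + \partial_y G_1, \quad G_0 := -(\partial_v J) f + f, \quad G_1 := -\frak K f + \fraka \partial_y f.
$$
We have $G_0 \in L^\infty(0,T;L^2_k)$. Since $\frak K$ grows like $y$ and $k>5/2$, also $G_1 \in \Lloc^2(\bar\UUU)$, and $G_1$ is in fact $L^2$ with weight $\omega/\langle y\rangle$ thanks to the assumption $\partial_y f \in L^2$. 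We would not apply Theorem~\ref{theo:trace1} literally, because $f$ is not bounded. Instead we would rerun its proof with the same modified convolution $g_\eps := f \circledast \rho_\eps$, directly at the $L^2$ level.

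\emph{Step 1: weighted a priori bounds.} Multiply the regularized equation \eqref{eq:trace2} (for $f_{\eps,\eps'}$) first by $f_{\eps,\eps'}\omega^2\chi$, and then by $f_{\eps,\eps'} J\nu\,\omega^2\langle y\rangle^{-2}$. This gives the analogues of \eqref{eq:trace-bound2} and \eqref{eq:trace-bound1}, now with weights. The factor $\langle y\rangle^{-2}$ in $d\xi^\omega_2$ is chosen to absorb the growth of $J\sim y$ and $\frak K \sim y$ in the terms $f^2\partial_v(J^2\nu)$ and $G_1\partial_y(fJ\nu\omega^2\langle y\rangle^{-2})$. Everything on the right-hand side is then controlled by $\|f\|_{L^\infty L^2_k}^2 + \|\partial_y f\|_{L^2}^2$. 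The terms involving $\partial_y f$ are handled by Cauchy--Schwarz: the $H^1$ assumption is unweighted, so the weight $\omega^2 \langle y \rangle^{-2}$ must be split carefully, and this is where $k>5/2$ (rather than merely $k>0$) enters.

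\emph{Step 2: Cauchy property.} The DiPerna--Lions commutator lemma up to the boundary (\cite[Lemma~1]{MR1765137}) gives $r_\eps\to0$ in $\Lloc^2$. Together with the bounds of Step~1, this shows that $(f_\eps)$ is Cauchy in $C([0,T];\Lloc^2(\OO))$ and that $(f_{\eps|\Gamma_{12}})$ is Cauchy in $L^2(\Gamma_{12}; d\xi^\omega_2)$. Globality in $y$ comes from the weights: the tails $y>R$ are uniformly small by the $L^2_k$ bound with $k$ large. This defines $\gamma_t f$ and $\gamma f$.

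\emph{Step 3: renormalized Green formula.} For the smooth $f_\eps$, the chain rule yields \eqref{eq:theo:existL2-1} with $\beta \in \BB_3$, since $|\beta(s)|\lesssim 1+s^2$ and $|\beta'(s)|\lesssim 1+|s|$. We then pass to the limit $\eps\to0$ using strong $L^2$ convergence in the interior. The term $\fraka\partial_y f_\eps\,\partial_y(\beta'(f_\eps)\psi)$ contains $\beta''(f_\eps)(\partial_y f_\eps)^2$, which converges because $\partial_y f_\eps\to\partial_y f$ in $\Lloc^2$ and $\beta''$ is bounded and continuous.

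\emph{Step 4: boundary term (main obstacle).} The delicate point is the degenerate point $(v_F,y_F)$ where $J=0$, because there $\gamma f$ is only controlled in $L^2(J^2\,dydt)$, so $J\beta(\gamma f)$ need not be integrable for a general $\beta \in \BB_3$. This is why $\beta\in\BB_3$ requires $\psi$ to be supported away from that point, i.e.\ $\psi \in C^1_c(\UUU \cup \Gamma_1 \cup \Gamma_2)$. For $\beta\in\BB_4$ we have $|\beta(s)|\lesssim 1+|s|$, hence $|J\beta(\gamma f)|\lesssim |J|+|J\gamma f| \in \Lloc^1$ near $y_F$, which allows test functions $\psi\in C^1_c(\UUU\cup\Gamma_{12})$. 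I would first try to handle this by cutting off $\psi$ near $(v_F,y_F)$ and letting the cutoff shrink, using dominated convergence for the $\BB_4$ case.
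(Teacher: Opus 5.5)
Your route differs from the paper's. The paper does not mollify the unbounded $f$ directly. It combines Theorem~\ref{theo:renormalization} with Theorem~\ref{theo:trace1} applied to a bounded renormalization of $f$, exactly as in the proof of Theorem~\ref{theo:trace2}. Only afterwards does it extract the $L^2$ information on $\gamma f$ and $\gamma_t f$, by inserting $\beta(s)=s^2$ with $\psi=\omega^2$ or $\psi=\omega^2\langle y\rangle^{-2}J\nu$ into \eqref{eq:theo:existL2-1}.

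Running the mollification argument of Theorem~\ref{theo:trace1} directly at the $L^2$ level is a legitimate alternative \emph{locally in $y$}. Since $f,\partial_y f\in\Lloc^2$ and $\fraka\in L^\infty$, one has $r_\eps\to0$ in $\Lloc^2$, and every term in the analogues of \eqref{eq:trace-bound1}--\eqref{eq:trace-bound2} for $f_{\eps,\eps'}$ tends to zero. Your Steps~3--4 are sound, including the $\BB_3$ versus $\BB_4$ distinction at $(v_F,y_F)$. Your route also has the advantage of not requiring $f\ge0$, which Theorem~\ref{theo:renormalization} does.

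The gap is the global weighted bound in Step~1. After multiplying by $f_{\eps,\eps'}J\nu\,\omega^2\langle y\rangle^{-2}$, the diffusive part of $G_1$ produces $\int\fraka(\partial_yf_{\eps,\eps'})^2J\nu\,\omega^2\langle y\rangle^{-2}$, whose weight is of size $\langle y\rangle^{2k-1}$.
\begin{itemize}
\item No splitting can bound this term by $\|f\|^2_{L^\infty L^2_k}+\|\partial_y f\|^2_{L^2}$. A tail $y^{-k-1}\sin(y^m)$ with $2\le m<k+3/2$ keeps both quantities finite and makes the term infinite.
\item A larger $k$ only makes matters worse. In the paper, $k>5/2$ serves only to ensure $L^2_k\subset L^1_2$.
\item What is needed is the weighted gradient bound $\int_\UUU(\partial_yf)^2\omega^2<\infty$. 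This is exactly what the paper uses on the right-hand side of \eqref{eq:theo:existL2-1-bound}, and the solutions of Theorem~\ref{theo:existL2} satisfy it by \eqref{eq:lem:GrowthL2}. So the $H^1$ hypothesis must be read with the weight $\omega$.
\end{itemize}

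In addition, your globality argument only treats $y\to\infty$. Near $y=0$ on the side $v=v_F$ one has $J=-y_Lv_F\neq0$, so $d\xi^\omega_2$ does not degenerate there. Without a cutoff in $y$, the integration by parts leaves a term on $\Gamma_0$ that you cannot control, because the mollified $f_{\eps,\eps'}$ do not satisfy \eqref{eq:VCktBd1-linear}. It is better to keep the mollification local in $y\in(0,\infty)$. Then obtain $\gamma f\in L^2(\Gamma_{12};d\xi^\omega_2)$ a posteriori from the identity for $f$ itself, where the Robin condition removes the $\Gamma_0$ contribution; this is what the paper does.

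A minor point: with your $G_1$ one has $G_0=-(\partial_vJ)f$, without the extra $+f$, and $G_0\in L^\infty(0,T;L^2_{k-1})$ rather than $L^2_k$.
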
 
 
 \begin{proof}[Proof of Theorem~\ref{theo:trace3}]
 That is a straightforward consequence of Theorem~\ref{theo:trace1} and Theorem~\ref{theo:renormalization}, observing that, because of the better integrability properties of the functions $f$ and $\partial_y f$ in the present framework, we can choose $\beta(s) := s^2 \in \BB_3$ with $\psi := \omega^2$ or $\psi := \omega^2 \langle y \rangle^{-2} J \nu$ 
  in \eqref{eq:theo:existL2-1}, what provide the convenient estimates on the trace functions $\gamma f $ and  $\gamma_t f$, so that we may repeat the proof of Theorem~\ref{theo:trace2}. For further reference, we  emphasize that the last choice of $(\beta,\psi)$ implies that 
\beqn
\label{eq:theo:existL2-1-bound}
   \int_{\Gamma}   (\gamma f)^2 J^2 \omega^2 \langle y \rangle^{-2} \le  C \Bigl \{ \sup_{[0,T]} \int_\OO f^2 \omega^2 + \int_\UUU (\partial_y f)^2 \omega^2 \Bigr\} ,
\eeqn
with $C = C_T (1 + \| \fraka \|_{L^\infty}^2 + \| \frakb \|_{L^\infty}^2)$. 
   \end{proof}

We now recall a 

\begin{theo}\label{theo:existL2}
For any polynomial weight function $\omega(y) := (1+y)^k$,  $k > 5/2$,
and any initial datum $f_0 \in L^2_k$, there exists a unique solution $f \in C([0,T];L^2_k) \cap L^2((0,T) \times (0,v_F); H^1(\R_+))$, $\forall \, T > 0$,  to the linear  VCk equation \eqref{eq:VCklinear}-\eqref{eq:LinearVCkBd} 
 and this one satisfies the growth estimate  
 \beqn\label{eq:lem:GrowthL2}
\sup_{t \in [0,T]} \| f_t \|^2_{L^2_k} + \int_0^T \| \partial_y f_t \|^2_{L^2_k}dt \le C e^{\kappa T } \| f_0 \|^2_{L^2_k}, \quad \forall \, T \ge 0,
\eeqn
for some constants $C = C_\omega \ge 1$ and  $\kappa \le C( 1+ \| \fraka \|_{L^\infty} + \| \frakb \|_{L^\infty})$.
More precisely, $f$ satisfies   \eqref{eq:theo:existL2-1} and the associated trace function $\gamma f$ satisfies the   boundary conditions \eqref{eq:VCktBd1}, \eqref{eq:VCktBd2} in the a.e. sense. 
The additional  boundary condition \eqref{eq:VCktBd1-linear} is encapsulated in the fact that $\psi$ does not necessarily  vanish on the boundary set $\Gamma_0$. 
Furthermore, if $f_0 \ge 0$, the solution $f$ satisfies 
$$
f(t,\cdot) \ge 0  \quad\hbox{and}\quad \| f(t,\cdot) \|_{L^1} = \| f_0 \|_{L^1}, \quad \forall \, t \ge 0. 
$$
\end{theo}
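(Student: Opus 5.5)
The plan is the classical variational route for linear kinetic Fokker--Planck problems, combined with the trace theory of Theorem~\ref{theo:trace3}. The argument has four steps: an a priori estimate, existence, uniqueness, and positivity with mass conservation.

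\emph{Step 1: the a priori estimate \eqref{eq:lem:GrowthL2}.} Take $\beta(s)=s^2$ and $\psi=\omega^2$ in \eqref{eq:theo:existL2-1}.

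The boundary term on $\Gamma_{12}$ reduces, by \eqref{eq:VCktBd1}--\eqref{eq:VCktBd2}, to
$$
\int_{\Gamma_2^+} J_+ (\gamma_+ f)^2\Bigl(1-\frac{J_+}{J_-}\Bigr)\omega^2\ \ge 0,
$$
because $J_+\le J_-$. The $\Gamma_0$ contribution vanishes by \eqref{eq:VCktBd1-linear}.

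For the interior terms:
\begin{itemize}
\item $-\fraka\int(\partial_y f)^2\omega^2$ gives the dissipation.
\item The cross terms $\fraka f\partial_y f\,\partial_y\omega^2$ and $\frak K f\,\partial_y(f\omega^2)$ are handled by Young's inequality, using $|\partial_y\omega^2|\lesssim\omega^2$ and $-y\,\partial_y\omega^2\le0$. This gives $\kappa\lesssim 1+\|\fraka\|_\infty+\|\frakb\|_\infty$.
\item The term $\partial_vJ$ is bounded by $\langle y\rangle$. After integrating by parts in $v$, it combines with $J\partial_v$ to give $\int f^2\,\partial_vJ\,\omega^2/2$. This is not bounded by $\omega^2$ alone, since it grows like $y$.
\end{itemize}
To absorb the linear growth I use the confining drift $-y\partial_y$. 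The term $-\int\partial_y(\frak K)\dots$ produces $\int y f^2\partial_y\omega^2 /2\cdot(-1)$ with $\partial_y\omega^2=2k\omega^2/(1+y)$, which gives roughly $-k\int f^2\omega^2$ plus $\int f^2\omega^2$. This is not enough to dominate $y f^2\omega^2$.

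I therefore expect this to be the main obstacle. I would first handle it by interpolating with the dissipation, via the Hardy/Nash-type bound
$$
\int y f^2\omega^2\le \eta\int(\partial_y f)^2\omega^2+C_\eta\int f^2\omega^2,
$$
which fails in general. The fallback is to note that $J=y(v_E-v)-y_Lv$ satisfies $\partial_vJ=-(y+y_L)<0$. The problematic term then has a good sign, $-\tfrac12\int f^2(y+y_L)\omega^2\le0$, so it can simply be discarded. With that, Gronwall gives \eqref{eq:lem:GrowthL2}.

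\emph{Step 2: existence.} I would use a Galerkin scheme or a Lions' theorem (J.-L. Lions) approach on the space $H=L^2_k$, $V=\{f\in H,\ \partial_yf\in H\}$. I would first regularize by adding $\eps\partial_{vv}^2$ with the corresponding inflow/reset conditions, or solve by a fixed point on the reset flux $J_+\gamma_+f$.

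Concretely, given an inflow datum $g\in L^2(\Gamma_2^-)$, the problem with prescribed incoming trace is solved by the standard Lions--Bardos/Degond theory for kinetic equations with inflow. Then the map $g\mapsto (J_+/J_-)\gamma_+f$ is a contraction on a short time interval in a weighted $L^2$ norm, by the computation of Step 1 for differences together with the strict inequality $J_+/J_-<1$. Iterating on time intervals of fixed length, made possible by the uniform estimate \eqref{eq:lem:GrowthL2}, yields a global solution.

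The trace properties and \eqref{eq:theo:existL2-1} then follow from Theorem~\ref{theo:trace3}. Continuity $f\in C([0,T];L^2_k)$ follows from the renormalized formula with $\beta(s)=s^2$ in the usual way, namely norm continuity plus weak continuity.

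\emph{Step 3: uniqueness.} Uniqueness follows by applying Step 1 to the difference of two solutions with zero initial datum.

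\emph{Step 4: positivity and mass conservation.} For positivity, take $\beta(s)=s_-^2$, which lies in $\BB_3$, in \eqref{eq:theo:existL2-1}. The same boundary computation applies, since $(\cdot)_-$ commutes with the reset condition because $J_\pm>0$. This gives $\|f_-(t)\|_{L^2_k}\le Ce^{\kappa t}\|(f_0)_-\|=0$.

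Mass conservation follows from \eqref{eq:theo:existL2-1} with $\beta(s)=s$ and $\psi=\chi_R(y)\to1$. Here the boundary terms on $\Gamma_2$ cancel by the reset condition, and $f\in L^1_2$, which holds because $k>5/2$, justifies the limit $R\to\infty$.
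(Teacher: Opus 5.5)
Your Step~1 has a sign error at exactly the point you identified as the main obstacle, and Steps~2--4 (the iteration, uniqueness, positivity) all rest on it. Computed directly from the equation, the transport contribution to $\frac{d}{dt}\frac12\int f^2\omega^2$ is
\[
-\int_\OO f\,\omega^2\,\partial_v(Jf)\,dvdy=-\frac12\int_\OO(\partial_vJ)\,f^2\omega^2\,dvdy-\frac12\int_{\Sigma_{12}}J\nu\,\omega^2(\gamma f)^2\,dy .
\]
Since $\partial_vJ=-(y+y_L)$, the interior term is $+\frac12\int_\OO(y+y_L)f^2\omega^2$, so it has the \emph{bad} sign. Do not read this sign off \eqref{eq:theo:existL2-1}: its interior integrals are printed with the wrong overall sign, and with $\beta(s)=s^2$ the printed version would make the diffusion anti-dissipative.

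This is structural, not bookkeeping. The field $J$ is compressive in $v$, so along characteristics $f$ grows like $e^{(y+y_L)t}$. With the weight $\omega^2(y)$ alone, the energy identity therefore contains a positive term that is unbounded relative to $\int f^2\omega^2$. As you noticed, neither the drift nor the dissipation absorbs it. The boundary dissipation, which is at least $\frac{v_F}{2v_E}\int J_+\omega^2(\gamma_+f)^2$, cannot control it pointwise in time either, since $f(t,\cdot)$ may vanish near $v=v_F$. So the Gronwall argument does not close.

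The missing idea is a $v$-dependent correction of the weight, so that the reset condition compensates the compression. Take $\psi=(v_E-v)\,\omega^2(y)$. It is equivalent to $\omega^2$ because $v_E-v\in[v_E-v_F,v_E]$, which is where a constant $C\ge1$ in front of $e^{\kappa T}$ in \eqref{eq:lem:GrowthL2} naturally appears. The interior transport term becomes $\frac12\int_\OO f^2\omega^2\bigl[(y+y_L)(v_E-v)-J\bigr]=\frac12\,y_Lv_E\int_\OO f^2\omega^2$, which is bounded. Using \eqref{eq:VCktBd1}--\eqref{eq:VCktBd2}, the boundary term is $-\frac12(v_E-v_F)\,y_F\int_{y_F}^\infty J_+\,y^{-1}\omega^2(\gamma_+f)^2\,dy\le0$. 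The $y$-computations (drift, diffusion, no-flux at $y=0$) are unchanged because the extra factor depends only on $v$. Gronwall then gives \eqref{eq:lem:GrowthL2} with $\kappa\lesssim1+\|\fraka\|_{L^\infty}+\|\frakb\|_{L^\infty}$.

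Once this weight is used, however, the contraction you claim in Step~2 fails. The ratio of incoming to outgoing boundary weights is then $(y-y_F)/y$, which is not uniformly below $1$. With the weight $\omega^2$ alone, the interior growth rate is unbounded in $y$, so shortening the time interval does not help either. A standard remedy is to first solve with a damped reset $J_-\gamma_-f=\theta J_+\gamma_+f$, $\theta<1$. The ratio becomes $\theta^2(y-y_F)/y\le\theta^2$, so the boundary fixed point is a genuine contraction on short time intervals. You then let $\theta\to1$ using the corrected estimate, which is uniform in $\theta\le1$.
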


 \begin{proof}[About the proof of Theorem~\ref{theo:existL2}]
 That is nothing but \cite[Theorem~2.2]{CFS+SM} and the energy estimate \eqref{eq:lem:GrowthL2} is a straightforward consequence of  \cite[(2.8)]{CFS+SM} and of the Gronwall lemma or it is also the limit estimate in 
 \cite[(2.22)]{CFS+SM}. 
  \end{proof}

\Black

\subsection{A truncated problem} 
\label{sec:truncated-pb}
In this section, for $n \ge 1$,  
we consider the regularized Voltage-Conductance kinetic equation
\beqn\label{eq:VCkn}
\partial_t F  + \partial_v(JF) + \partial_y(K^n_F F) - a^n_F \partial_{yy}^2 F = 0
\quad\text{in}\quad (0,\infty) \times \OO, 
\eeqn
 where   the coefficients are given by 
\beqn\label{eq:VCkn-coeff}
  K^n_F :=  y_* + \cc\, \NN^n_F - y, 
\quad
a^n_F := a_* + \cc^2 \NN^n_F , 
\quad 
 \NN^n_F :=  \NN_F  \wedge  n, 
\eeqn
complemented with the boundary conditions \eqref{eq:VCktBd1}, \eqref{eq:VCktBd2} and 
\beqn\label{eq:VCktBd0-nonlinear-n}
  K^n_F \gamma F-  a^n_F  \partial_y \gamma F = 0 \ \hbox{ on } \ \Gamma_0, 
\eeqn
and with an initial datum
\beqn\label{eq:VCkTrunct=0}
F(0,\cdot) = F^n_0\quad\text{in}\quad \OO,
 \eeqn
 with $0 \le F^n_0 \in L^2_k(\OO)$, $k > 5/2$, so that $ L^2_k(\OO) \subset L^1_2(\OO)$. 
We aim to establish the following existence result. 
  
 \begin{prop}\label{prop:existence-Schauder} Under the above conditions, there exists at least one solution $F \in X_T := C([0,T]; L^2_k(\OO)) \cap L^2((0,T) \times (0,v_F); H^1(\R_+))$, $\forall \, T > 0$,  
 to the truncated Voltage-Conductance kinetic equation \eqref{eq:VCkn},  \eqref{eq:VCktBd1}, \eqref{eq:VCktBd2},  \eqref{eq:VCktBd0-nonlinear-n}, \eqref{eq:VCkTrunct=0}. 
 \end{prop}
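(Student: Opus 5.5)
We apply the Schauder fixed point theorem on a fixed time interval $[0,T]$; a solution on $[0,\infty)$ then follows by uniqueness for the linear problem and a diagonal/concatenation argument, or by repeating the construction on every $T$. We define the closed convex set
$$
\CCC := \Bigl\{ \frak N \in L^1(0,T); \ 0 \le \frak N \le n \Bigr\},
$$
which is bounded in $L^\infty(0,T)$, and we work in the topology of $L^1(0,T)$. Given $\frak N \in \CCC$, we set $\frak b := y_* + \cc \frak N$ and $\fraka := a_* + \cc^2 \frak N$. These coefficients satisfy $y_* \le \frak b \le a^*$ and $a_* \le \fraka \le a^*$ with $a^* := \max(a_*,y_*) + 1 + (\cc + \cc^2) n$. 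Theorem~\ref{theo:existL2} then provides a unique nonnegative solution $f = f_{\frak N} \in X_T$ of the linear problem \eqref{eq:VCklinear}-\eqref{eq:LinearVCkBd} with initial datum $F^n_0$. It satisfies the energy bound \eqref{eq:lem:GrowthL2} with constants depending only on $n$, $T$ and $\|F^n_0\|_{L^2_k}$, not on $\frak N$. We define $\Phi(\frak N) := \NN(\gamma_+ f_{\frak N}) \wedge n \in \CCC$. A fixed point $\frak N = \Phi(\frak N)$ gives $\frak N = \NN^n_F$ for $F := f_{\frak N}$, so $F$ solves the truncated problem.

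Several properties of $\Phi$ have to be checked. First, $\NN(\gamma_+ f)$ is finite. The trace estimate \eqref{eq:theo:existL2-1-bound} gives $\gamma f \in L^2(\Gamma_{12};J^2\omega^2\langle y\rangle^{-2})$ uniformly, and by Cauchy--Schwarz with $k>5/2$,
$$
\int_0^T\!\!\int J_+\gamma_+ f \le \Bigl(\int (\gamma_+f)^2J_+^2\omega^2\langle y\rangle^{-2}\Bigr)^{1/2}\Bigl(\int \omega^{-2}\langle y\rangle^{2}\Bigr)^{1/2}.
$$
Alternatively, the flux bound of Proposition~\ref{prop:APB1} applies directly. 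Second, $\Phi$ is continuous. Let $\frak N_j \to \frak N$ in $L^1$; then $\frak N_j \to \frak N$ also in every $L^p$, $p<\infty$, by the $L^\infty$ bound. The solutions $f_j$ are bounded in $X_T$ uniformly in $j$, so a subsequence converges weakly to some $f$. Weak limits in the linear weak formulation pass to the limit because the coefficients converge strongly and are bounded, so $f$ solves the linear problem with coefficients built on $\frak N$. By uniqueness $f = f_{\frak N}$, and the whole sequence converges. It remains to get strong convergence of $\NN(\gamma_+ f_j)$.

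Compactness of the outgoing traces is the main obstacle, and it is also what makes $\Phi(\CCC)$ relatively compact; it gives both compactness and continuity. We must show that for any sequence $(f_j)$ bounded in $X_T$ and solving linear problems with coefficients bounded by $a^*$, the sequence $(\NN(\gamma_+ f_j))$ is relatively compact in $L^1(0,T)$. We first cut off $y$ near $y_F$ and near $\infty$. The uniform weighted $L^2$ trace bound with weight $J^2\omega^2\langle y\rangle^{-2}$ makes the contributions of $y\in(y_F,y_F+\eps)\cup(R,\infty)$ to $\int J_+\gamma_+f_j$ small uniformly in $j$; near $y_F$ the $L^2$ bound with factor $J^2$ controls $\int J\gamma f$ by Cauchy--Schwarz. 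On the remaining set we use the transport structure along $v$ near $v=v_F$, where $J \ge c>0$. Here $\partial_t f + J\partial_v f = G_0 + \partial_y G_1$ with $G_0, G_1$ bounded in $L^2$, since $\partial_y f \in L^2$ and the coefficients are bounded. Averaging lemma/characteristics near the boundary $v=v_F$ then express the trace $\gamma_+ f_j(t,y)$ as the value transported from the interior. The time--$y$ regularity comes from the $H^1_y$ bound and the equation, which gives $\partial_t$ in $L^2_tH^{-1}_y$ after integrating in $v$.

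Concretely, for a test function $\chi(y)$ we compute $\frac{d}{dt}$ of $\int \chi(y)\,\theta(v) f\,dvdy$, with $\theta$ a cutoff equal to $1$ near $v_F$. The Green formula of Theorem~\ref{theo:trace3} expresses $\int\chi J_+\gamma_+ f\,dy$ as an interior integral plus this time derivative. The interior integral is bounded in $L^2(0,T)$ with derivative bounded in $L^2(0,T)$, and the time-derivative term is handled by integrating against time-translates. This yields uniform equicontinuity under time translations of $\int \chi J_+\gamma_+f_j\,dy$, and Riesz--Fréchet--Kolmogorov gives strong $L^1(0,T)$ compactness. A density argument in $\chi$ — approximating $\mathbf 1_{(y_F+\eps,R)}J_+$ — uses the trace bound again to remove $\chi$. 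Since $s\mapsto s\wedge n$ is $1$-Lipschitz, $\Phi$ is continuous and compact. Schauder's theorem gives a fixed point, and $F=f_{\frak N}\in X_T$ is the desired solution.
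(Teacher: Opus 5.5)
Your reformulation of the fixed point is correct and slightly neater than the paper's $L^1$ ball $\NNN_R$: you work on $\{0\le \frak N\le n\}$ and put the truncation in $\Phi(\frak N)=\NN(\gamma_+ f_{\frak N})\wedge n$. Reducing continuity to trace compactness plus uniqueness for the linear problem follows the same logic as the paper. The gap is in the step you single out yourself, the strong $L^1(0,T)$ compactness of $\NN(\gamma_+ f_j)$: the mechanism you describe cannot give it. Testing with $\chi(y)\theta(v)$ gives $\int J_+\gamma_+ f_j\,\chi\,dy = B_j - A_j'$, where $A_j := \int_\OO f_j\chi\theta$ and $B_j := \int_\OO f_j\,[J\theta'\chi + \frak K_j\theta\chi' + \fraka_j\theta\chi'']$.

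\begin{itemize}
\item $B_j$ contains $\frak b_j(t)\int_\OO f_j\theta\chi'$ and $\fraka_j(t)\int_\OO f_j\theta\chi''$, where $\frak b_j,\fraka_j$ are built on an arbitrary $\frak N_j$ of your set. These coefficients are merely $L^\infty$ in time and only weakly-$*$ convergent along the sequence. So $B_j$ has no bounded time derivative and is in general not strongly compact. Its derivative would also bring the traces back, since $\theta\chi'\neq 0$ at $v=v_F$.
\item More fundamentally, $A_j'$ is computed from the same equation, so the identity is a tautology. Boundedness of $A_j$ and $A_j'$ gives no control on the time translates of $A_j'$: take $A_j=j^{-1}\sin(jt)$, $A_j'=\cos(jt)$. ``Integrating against time-translates'' therefore only gives equicontinuity in a negative norm, i.e.\ weak compactness of the flux, which you already had.
\item Integrating the equation in $v$ just reproduces the boundary fluxes. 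Compactness of $v$-averages of $f_j$ says nothing about the values of $f_j$ at $v=v_F$.
\end{itemize}

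Two ingredients are missing. (i) Strong compactness of $(f_j)$ in $L^1(\UUU)$. This does not follow from the $X_T$ bound; it comes from a hypoelliptic averaging lemma exploiting $\partial_y J=v_E-v>0$. The paper imports it from \cite[Theorem~4.1]{CFS+SM}, based on \cite[Theorem~1.3]{zbMATH07050183}. It is needed because the renormalized terms $\beta(f_j)$, $f_j\beta'(f_j)$ are nonlinear in $f_j$. (ii) A mechanism transferring interior compactness to the outgoing trace. The paper does this as follows.
\begin{itemize}
\item Renormalize with the concave $\beta(s)=\log(1+s)$ (Theorem~\ref{theo:renormalization}) and pass to the limit in the renormalized Green formula up to $\Gamma_2^+$.
\item The term $\fraka_{\frak N_j}(\partial_y\beta(f_j))^2$ only converges to $\fraka_{\bar{\frak N}}(\partial_y\beta(f))^2+\xi$, with a defect measure $\xi\ge 0$. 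The lower bound uses that $\int_\OO\phi\,\partial_y\beta(f_j)$ converges strongly in $L^1(0,T)$, so it can be paired with the weakly converging coefficient.
\item Comparing with the renormalized formula for the limit $f$, the weak limit $\bar\beta$ of $\beta(\gamma_+ f_j)$ satisfies $\bar\beta\ge\beta(\gamma_+ f)$. Concavity and $\gamma_+ f_j\wto\gamma_+ f$ give the reverse inequality.
\item Strict concavity then upgrades this to strong convergence in $L^1(\Gamma_2^+;J\,dydt)$.
\end{itemize}
Your mention of ``averaging lemma/characteristics'' points in the right direction. As written, however, neither the interior compactness nor its transfer to the trace is established, so both the compactness and the continuity of $\Phi$ remain unproved.
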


The rest of this section will be devoted to clarifying the meaning of the statement and proving this result. For that purpose we will use the Schauder  fixed point theorem. That is a small variant with respect to the proof of   \cite[Theorem~1.1]{CFS+SM} which is based on the Tykonov fixed point theorem. The issue comes from the way we regularize the equation thanks to the truncation operation $ \NN_F  \wedge  n$ which  makes a little more complicated   the proof of Proposition~\ref{prop:existence-Schauder}    than it would be if we had used another regularization operation (as a convolution for instance)  
but this has the advantage of making possible to use in the next step (see  Section~\ref{subsec:passingTOlimit}) the a priori estimates established in Section~\ref{sec:A priori estim}.
We split the proof of Proposition~\ref{prop:existence-Schauder} into four steps. We first define a suitable function $\Upsilon$ associated to our problem. In a second step, we prove that this mapping enjoys a continuity property and, in a third step, that it enjoys a strong compactness property. In a last step, we use the Schauder  fixed point theorem in order to conclude.

\smallskip

\smallskip
$\bullet$
For a given $\frak N \in L^2(0,T)$, we define $f = f_{\frak N}$ the solution to the linear equation 
\beqn\label{eq:VCkTrunc-Lin}
\partial_t f  + \partial_v(Jf) + \partial_y(\frak K_{\frak N^n} f) - \frak a_{\frak N^n} \partial_{yy}^2 f = 0, 
\quad\text{in}\quad (0,\infty) \times \OO,
\eeqn
with  
\beqn\label{eq:def-Nn} 
\frak K_{\frak N^n} :=  \frak b_{\frak N^n} - y, 
\quad
\frak b_{\frak N^n} := y_* + \cc\, \frak N^n,  
\quad
\frak a_{\frak N^n} := a_* + \cc^2 \frak N^n, 
\quad
\frak N^n := \frak N \wedge n,
\eeqn
complemented with the boundary condition \eqref{eq:LinearVCkBd} associated to $\frak K  := \frak K_{\frak N^n} $ and $\fraka :=\fraka_{\frak N^n}$ and with
the initial datum \eqref{eq:VCkTrunct=0}. 
 From Theorem~\ref{theo:existL2},  we know that there exists a unique solution $f \in X_T$ to equation \eqref{eq:VCkTrunc-Lin},  \eqref{eq:LinearVCkBd} 
associated to the initial datum \eqref{eq:VCkTrunct=0} and from  Theorem~\ref{theo:trace3}, we know that $f$ has  a trace $\gamma f$ which satisfies 
$$
J\gamma f \in L^2((0,T) \times \R_+; \langle y \rangle^{k-1} dydt).
$$
Let us make more precise the available estimates. 

\smallskip
On the one hand, we have 
\bean
\int_0^T \NN_f \, dt 
&=&  \int_0^T  \int_0^\infty J_+ \gamma f dy   dt 
\\
&\le&  \Bigl( \int_0^T  \int_0^\infty (J_+ \gamma f)^2  \omega^2 \langle y \rangle^{-2} dy  dt \Bigr)^{1/2} \Bigl( T\int_0^\infty \omega^{-2} \langle y \rangle^{2} dy \Bigr)^{1/2}, 
\eean
so that 
$$
\| \NN_f \|_{L^1(0,T)} \le C_{\omega,T}  \|  \gamma f    \|_{L^2(d\xi^\omega_2)}.
$$
From \eqref{eq:theo:existL2-1-bound} and \eqref{eq:lem:GrowthL2}, we also have 
\beqn
\label{eq:theo:regularizedPB-estimgammaf}
    \|  \gamma f    \|_{L^2(d\xi^\omega_2)} \le  A \| F_0 \|_{L^2_k}, 
 \eeqn
 with $A = A(T,n)$. Defining $R :=  C_{\omega,T} A  \| F_0 \|_{L^2_k}$ and 
 $$
\NNN_R := \{ \frak N \in L^1(0,T); \ \| \frak N \|_{L^1} \le R \}, 
$$
we thus have 
$$
\Upsilon : \NNN_R \to \NNN_R, \quad \frak N \mapsto \Upsilon(\frak N) := \NN_f.
$$

\smallskip
On the other hand, defining 
 $\frak C := \max( y_* + \frak c n, a_* + \frak c^2 n)$, we have  
 $$
  \frakb_{\frak N^n} \le \frak C (1+\NN_f),  \quad  a_*   \le  \fraka_{\frak N^n} \le \frak C (1+\NN_f). 
 $$
 We may then justify (with the help of Theorem~\ref{theo:trace3}) the formal computations of Propositions~\ref{prop:APB1}, ~\ref{prop:APB2}, ~\ref{prop:APB3}
 and we deduce, with the notation of  Corollary~\ref{cor:compactness},  that $f \in  \CCC_\UUU$ and $\gamma f \in  \CCC_\Gamma$ with a constant $C_T = C(T,n)$ independent of  $\frak N$.

\smallskip
$\bullet$   We  claim that the mapping $\Upsilon$ is  continuous from $L^1(0,T)$ endowed with the strong topology into $L^1(0,T)$ endowed with the weak topology. 
We thus consider a sequence $( \frak N_\ell)$ such that $ \frak N_\ell \to  \frak N$ strongly in $L^1(0,T)$ and we prove that   $\Upsilon (\frak N_\ell) \wto \Upsilon (\frak N)$ weakly in $L^1(0,T)$.  
On the one hand, the sequence $f_\ell := f_{\frak N_\ell}$
is bounded in $X_T$ because of \eqref{eq:lem:GrowthL2}, 
 from what we deduce for a subsequence $(f_{\ell'})$ and a function $f \in L^\infty(L^2_\omega) \cap L^2_{tv}H^1_y$ that 
\beqn\label{eq:regularizedPB-cvgce-interior}
f_{\ell'} \wto f \hbox{ weakly in } L^\infty(L^2_\omega) \cap L^2_{tv}H^1_y.
\eeqn
Because $\frak N^n_{\ell'}   \to \frak N^n $ strongly in $L^2(0,T)$, we easily pass to the limit in the weak formulation of the equation 
\eqref{eq:VCkTrunc-Lin},   \eqref{eq:LinearVCkBd}, \eqref{eq:VCkTrunct=0} satisfied by $(f_{\ell'},\frak N_{\ell'})$ and we obtain that $(f,\frak N)$ satisfies the same equation in the distributional sense.  
More precisely, observing that $(\gamma f_{\ell'})$ satisfies \eqref{eq:theo:regularizedPB-estimgammaf} uniformly in $\ell'$, there exists a function $\bar \gamma \in L^1((0,T) \times (0,\infty); |J| dy dt)$ such that  for a subsequence $(\gamma f_{\ell''})$ of $(\gamma f_{\ell'})$ there holds
\beqn\label{eq:regularizedPB-cvgce-boundary}
\gamma f_{\ell''} \wto \bar \gamma \,\,\,  \hbox{ weakly in } \,\,\, L^1((0,T) \times (0,\infty); |J| dy dt), 
\eeqn
and passing to the limit in the equation 
$$ \int_{\Gamma_{12}} \nu J   \gamma f_{\ell''} \psi + \int_\UUU f_{\ell''} [ \partial_t \psi +  J \partial_v  \psi]  
+ \int_\UUU  (\frak K_{\frak N^n_{\ell''}} f_{\ell''} - \fraka_{\frak N^n_{\ell''}} \partial_{y} f_{\ell''} ) \partial_y  \psi  =  \int_\OO F_0^n \psi (0, \cdot) , 
$$
we get 
\beqn\label{eq:regularizedPB-limiteq}  
\int_{\Gamma_{12}} \nu J   \bar\gamma  \psi + \int_\UUU f [ \partial_t \psi +  J \partial_v  \psi]  
+ \int_\UUU  (\frak K_{\frak N^n} f - \fraka_{\frak N^n} \partial_{y} f ) \partial_y  \psi  =  \int_\OO F_0^n \psi (0, \cdot) , 
\eeqn
for any $\psi \in C^1_c(\bar \UUU)$. Passing to the limit in the boundary conditions  \eqref{eq:VCktBd1} and \eqref{eq:VCktBd2} satisfied by $\gamma f_{\ell''}$, we also deduce that 
$\bar\gamma$ satisfies \eqref{eq:theo-stab-cvgce12}. 
On the other hand, from   \eqref{eq:regularizedPB-limiteq} and the trace Theorem~\ref{theo:trace3}, we have $\bar \gamma = \gamma f$ a.e. on $\Gamma_{12}$, so that $f$ satisfies the boundary conditions  \eqref{eq:VCktBd1} and \eqref{eq:VCktBd2}. Since the last boundary condition  \eqref{eq:VCktBd0-nonlinear-n} is directly encapsulated in the weak formulation \eqref{eq:regularizedPB-limiteq}, we have established that $f$ is the   
unique solution   to equation \eqref{eq:VCkTrunc-Lin},  \eqref{eq:LinearVCkBd},  \eqref{eq:VCkTrunct=0} associated to $\frak N$. 
Together with \eqref{eq:regularizedPB-cvgce-boundary} and similarly as for \eqref{eq:CVGCE-Nn}, we also have 
\bean
 \Upsilon (\frak N) = \NN(\gamma_+ f) = \lim  \NN(\gamma_+ f_{\ell''})   =  \lim \Upsilon (\frak N_{\ell''}), 
\eean
and by uniqueness of the limit, we get that $\Upsilon $ is continuous.

\smallskip
$\bullet$ We establish now that $\Upsilon$ is compact. We thus consider a sequence $(\frak N_\ell)$ of  $\NNN_R$ and we have to prove that we may find a subsequence of $(\Upsilon(\frak N_\ell))$ which strongly converges in $L^1(0,T)$. With the above notations, the associated sequence $(f_\ell)$ is bounded in $X_T \subset \CCC_\UUU$. On the other hand, there exists $ \bar {\frak N}  \in L^\infty(0,T)$ 
such that, up to the extraction of a subsequence,  
\beqn\label{eq:TruncatedEq-Nell-barN}
\frak N_{\ell}^n  =  (\frak N_{\ell} ) \wedge n \wto \bar {\frak N}  \ \hbox{ weakly in }\  L^\infty(0,T). 
\eeqn
We may thus repeat  the proof of Theorem~\ref{theo:stability} or the  above proof of the continuity property of $\Upsilon$ and we get the interior convergence \eqref{eq:regularizedPB-cvgce-interior} with $f$ satisfying 
\beqn\label{eq:TruncatedEq-on-f}
 \partial_t  f +    \partial_v (Jf)  +  \partial_y(\frak K_{\bar {\frak N}}  f) -     \fraka_{\bar {\frak N}} \partial_{yy}^2 f   = 0,  
\eeqn
 as well as, up to the extraction of a subsequence,  the weak convergence at the boundary 
 \beqn\label{eq:TruncatedEq-cvgce-boundary}
\gamma f_{\ell} \wto  \gamma f \,\,\,  \hbox{ weakly in } \,\,\, L^1(\Gamma_{12}; |J| dy dt). 
\eeqn  
In order to improve that last convergence, we follow \cite[Proof of Theorem 5.2]{MR2721875} (see also \cite{MR972541}). 
We just give the idea, the only novelty being the non constant factor in front of the diffusion term and a small simplification of the argument.  

\smallskip
We take $\beta(s) := \log (1+s)$, so that $\beta \in \BB_2$, and we define  $g_\ell := \beta(f_\ell)$, so that 
\beqn\label{eq:log-f-ell}
 \partial_t  g_\ell +  J \partial_v g_\ell  +  \frak K_{\frak N_{\ell}^n} \partial_y g_\ell   - \fraka_{\frak N_{\ell}^n} \partial_{yy}^2 g_\ell =   f_\ell \beta'(f_\ell) (1-\partial_v J) 
 +  \fraka_{\frak N_{\ell}^n}  (\partial_{y} g_\ell)^2, 
\eeqn
 thanks to Theorem~\ref{theo:renormalization}. 
From \cite[Theorem~4.1]{CFS+SM}, which is a consequence of \cite[Theorem~1.3]{zbMATH07050183}, 
the sequence $(f_\ell)$ belongs to a strong compact set of $L^1(\UUU)$, so that, up to the extraction of a subsequence,  
\beqn\label{eq:TruncatedEq-fn-f}
f_\ell \to f \ \hbox{ strongly in } \ L^1(\UUU). 
\eeqn
As a consequence, we have $g_\ell  \to g := \beta(f)$ strongly in $L^2(\UUU)$. We claim that, up to the extraction of a subsequence, 
\beqn\label{eq:TruncatedEq-a1}
 \fraka_{\frak N_{\ell}^n}  (\partial_{y} g_\ell)^2  \wto \fraka_{\bar{\frak N}  }    (\partial_{y} g)^2 + \xi \ \hbox{ weakly in } \ \DD'(\UUU), 
\eeqn
for a bounded measure $\xi \ge 0$. 
In order to see that, we first observe that the sequence $( \fraka_{\frak N_{\ell}^n}  (\partial_{y} g_\ell)^2)$ is bounded in $L^1(\UUU)$, so that there exists a bounded measure $\Xi \ge 0$ such that 
$$
 \fraka_{\frak N_{\ell}^n}  (\partial_{y} g_\ell)^2  \wto \Xi \ \hbox{ weakly in } \ \DD'(\UUU).
$$
On the other hand, we observe that 
\beqn\label{eq:TruncatedEq-a2}
  \fraka_{\frak N_{\ell}^n}  (\partial_{y} g_\ell)^2 \ge  \fraka_{\frak N_{\ell}^n}  [ \phi^2 - 2 \phi  \partial_{y} g_\ell]
\eeqn
  for any $\phi \in C_c(\bar\UUU)$. 
Repeating the proof of \eqref{eq:compactnessStrongMean} in the present context, we see that 
\beqn\label{eq:TruncatedEq-strongcon-moyenne}
\int_\OO   \phi  \, \partial_y g_\ell  \to \int_\OO   \phi  \, \partial_y g
\ \hbox{ strongly } \ L^1(0,T), 
\eeqn
 so that we may pass to the limit in \eqref{eq:TruncatedEq-a2}, and we get 
  $$
\Xi \ge  \fraka_{\bar{\frak N}}  [ \phi^2 - 2 \phi  \partial_{y} g]
  $$
  for any $\phi \in C_c(\bar\UUU)$. We conclude to \eqref{eq:TruncatedEq-a1} by maximizing in $\phi$. 
 We finally observe that because $(\gamma f_{\ell})$ satisfies \eqref{eq:theo:regularizedPB-estimgammaf} uniformly in $\ell$, the same holds for $(\gamma g_{\ell})$ and there thus exists a function $\bar\beta \in L^2(\Gamma_{12},d\xi^\omega_2)$, such that, up to the extraction of a subsequence, 
 \beqn  \label{eq:TruncatedEq-gamma-gell-tobeta}
 \gamma g_{\ell} \wto \bar\beta \ \hbox{ weakly in } \ L^2(\Gamma_{12},d\xi^\omega_2).
 \eeqn
We now want to pass to the limit in   equation \eqref{eq:log-f-ell} up to the outgoing boundary. More precisely, from equation \eqref{eq:log-f-ell}  and  Theorem~\ref{theo:trace2}, we may  write
 \bean
&&
 \int_{\Gamma_{2}^+}  \gamma g_{\ell}  \, \varphi \, J \, dyds
= \int_{\UUU} \left\{  g_\ell \, ( \partial_t  \varphi + \partial_v (J \varphi) + \partial_y ( \frak K_{\frak N_{\ell}^n}  \varphi)) +   {\bf F}^\beta_{0\ell}\varphi   - {\bf F}^\beta_{1\ell} \partial_y\varphi   \right\}  dydvdt, 
\eean
for any $\varphi \in C^2_c(\UUU \cup \Gamma_2^+)$, with 
 $$
 {\bf F}^\beta_{0\ell} :=    f_\ell \beta'(f_\ell) (1-\partial_v J ) +   \fraka_{\frak N_{\ell}^n}  (\partial_{y} g_\ell)^2,
 \quad 
 {\bf F}^\beta_{1\ell} :=   \fraka_{\frak N_{\ell}^n}   \partial_{y} g_\ell, 
 $$
 and where we have observed that  $-\beta''(f_\ell) \fraka_{\frak N_{\ell}^n}   (\partial_{y} f_\ell)^2 =  \fraka_{\frak N_{\ell}^n}  (\partial_{y} g_\ell)^2$. 
Using \eqref{eq:TruncatedEq-Nell-barN}, \eqref{eq:TruncatedEq-fn-f}, \eqref{eq:TruncatedEq-a1}, \eqref{eq:TruncatedEq-strongcon-moyenne} and \eqref{eq:TruncatedEq-gamma-gell-tobeta}, we may pass to the limit  in the above equation, and we get 
 \bean
 \int_{\Gamma_{2}^+} \bar\beta \, \varphi \, J \, dyds
= \int_{\UUU} \left\{  \beta(f) \, ( \partial_t  \varphi + \partial_v (J \varphi) + \partial_y (\frak K_{\bar{\frak N}}  \varphi)) +   ({\bf F}^\beta_0 + \xi)\varphi   - {\bf F}^\beta_1 \partial_y\varphi   \right\}  dydvdt,
\eean
with 
$$
{\bf F}^\beta_0 :=   f \beta'(f) (1-\partial_v J )  +   \fraka_{\bar{\frak N}}  (\partial_{y} g)^2, \quad
{\bf F}^\beta_1 :=   \fraka_{\bar{\frak N}}  \partial_{y} g.
$$
On the other hand, because $f$ satisfies \eqref{eq:TruncatedEq-on-f} and thanks to the trace Theorem~\ref{theo:trace2}, we have  
 \bean
&&
 \int_{\Gamma_{2}^+} \beta(\gamma_+ f) \, \varphi \, J \nu \, dyds
= \int_{\UUU} \left\{  \beta(f) \, ( \partial_t  \varphi + \partial_v (J \varphi) + \partial_y (\frak K \varphi)) +   {\bf F}^\beta_0 \varphi   - {\bf F}^\beta_1 \partial_y\varphi   \right\}  dydvdt, 
\eean
observing again that  $- \beta''(f)  \fraka_{\bar{\frak N}}  (\partial_{y} f)^2 =  \fraka_{\bar{\frak N}}  (\partial_{y} g)^2 $. 
Comparing the two equations, we deduce that 
 \bean
  \int_{\Gamma^+_{2}} \bar\beta \, \varphi \, J  \, dyds -   \int_{\UUU}  \xi \varphi  dydvdt =    \int_{\Gamma^+_{2}}  \beta(\gamma_+ \, f) \, \varphi \, J   \, dyds,
\eean
for any $ 0 \le \varphi \in C^2_c(\UUU \cup \Gamma_2^+)$, or equivalently  
$$
\liminf \beta(\gamma_+ f_{\ell}) = \bar\beta \ge \beta(\gamma_+ f) \  \hbox{ on } \ \Gamma_2^+.
$$
Since $\beta$ is a concave function, the reverse inequality $ \beta(\gamma_+ f) \ge \limsup \beta(\gamma_+ f_{\ell})$ is true, and we thus deduce that 
$$
\beta(\gamma_+ f_{\ell}) \wto \beta(\gamma_+ f) \  \hbox{ weakly on } \ \Gamma_2^+.
$$
Together with \eqref{eq:TruncatedEq-cvgce-boundary} 
and a classical convexity argument (see for instance \cite{MR1260437}), we deduce the 
$$
\gamma_+ f_{\ell} \to  \gamma_+ f \  \hbox{ strongly in }  \ L^1((0,T) \times \Sigma_2^+;Jdtdy),  
$$
from what we immediately get
$$
\Upsilon (\frak N_{\ell}) = \NN(\gamma_+ f_{\ell}) \to \NN(\gamma_+ f) = \Upsilon (\bar {\frak N})
$$
strongly in $L^1(0,T)$. We have established that $\Upsilon$ is a compact function.

  \smallskip
$\bullet$
We equip  $\NNN_R$ with the strong topology of $L^1(0,T)$ and we observe that it is a closed and convex set.  From the two above properties of $\Upsilon$, we observe that $\Upsilon: \NNN_R \to \NNN_R$ is continuous and compact.
We may apply the Schauder  fixed point theorem which tells us that there exists $\frak N^* \in \NNN_R$ such that $\Upsilon (\frak N^*) = \frak N^*$. 
The function $F^n := f_{\frak N^*}$ solution to the   equation \eqref{eq:VCkTrunc-Lin},  \eqref{eq:def-Nn},  \eqref{eq:LinearVCkBd},  \eqref{eq:VCkTrunct=0} is such that $\frak N^*  = \NN(\gamma_+ F^n)$.  
  In other words, $F^n$ is a solution to the regularized Voltage-Conductance kinetic equation \eqref{eq:VCkn}, 
\eqref{eq:VCkn-coeff},  \eqref{eq:VCktBd1}, \eqref{eq:VCktBd2}, \eqref{eq:VCktBd0-nonlinear-n}, \eqref{eq:VCkTrunct=0}.

\Black
\subsection{Uniform estimates and passing to the limit in the approximation sequence} 
\label{subsec:passingTOlimit}

We recall the assumption 
$$
C_0 :=  \int_\OO F_0 ( \langle y \rangle^2 + |\log F_0|) dvdy < \infty, 
$$
we define the initial datum 
$$
F_0^n := F_0 \wedge n \, {\bf 1}_{y \le n}
$$
and we consider $F^n \in X_T$ the solution provided by Proposition~\ref{prop:existence-Schauder}  
 to the truncated Voltage-Conductance kinetic equation \eqref{eq:VCkn},  \eqref{eq:VCktBd1}, \eqref{eq:VCktBd2},  \eqref{eq:VCktBd0-nonlinear-n}
 and associated to the above initial datum $F^n_0$. Because \eqref{eq:VCkn} is of the form \eqref{eq:Gal-eq-for-APB}, we may use Propositions~\ref{prop:APB1}, ~\ref{prop:APB2}, ~\ref{prop:APB3}
 and we immediately obtain that for any $T > 0$, there exists a constant $C_T := C(T,C_0)$, and thus independent of $n \ge 1$, such that 
 \bean
 &&
  \sup_{[0,T]} \int_\OO F^n ( |\log F^n| + \langle y \rangle^2)  dydv + \int_0^T a^n _{F^n} \II(F^n) dt  \le C_T
\\
 &&
  \int_0^T\int_0^\infty J_+ \gamma_+ F^n  \bigr\{ \langle y \rangle  +   \log {J_- \over J_+}  + \chi_0   \log \gamma_+ F^n \bigr\} dy dt \le C_T,
 \eean
 where $\chi_0$ is defined in Proposition~\ref{prop:APB3}. 
  In order to pass to the limit in equations  \eqref{eq:VCkn},  \eqref{eq:VCktBd1}, \eqref{eq:VCktBd2},  \eqref{eq:VCktBd0-nonlinear-n} we may (almost) invoke
 Theorem~\ref{theo:stability}. The only difference in the present context comes from the fact that in  \eqref{eq:VCkn} the coefficients $K^n_F$ and $a^n_F$ involved the truncation $\NN^n_{F^n}$ instead of $\NN_{F^n}$, 
 which does not cause any additional difficulty. 
  Indeed, coming back to the proof of  Theorem~\ref{theo:stability}, we still have \eqref{eq:theo-stab-cvgce2}, from what we deduce 
 \beqn\label{eq:CVGCE-Nn-truncBIS}
\NN_n := \NN(\gamma_+ F^n) \wto \NN(\bar\gamma_+) 
\ \hbox{ weakly in } \ L^1(0,T),
\eeqn
and the reverse sense of the Dunford-Pettis Lemma tells us that 
$( \NN_n )$ is equi-integrable, that is 
$$
\sup_{n \ge 1} \int_0^T \NN_n {\bf 1}_{\NN_n \ge A} dt \le \eta(A) \to 0, \quad \hbox{as}\quad A \to \infty.
$$
Writing
$$
0 \le \NN_n - \NN_n \wedge n \le \NN_n {\bf 1}_{\NN_n \ge n},
$$
we then deduce that 
$$
\| \NN_n - \NN_n \wedge n\|_{L^1(0,T)} \le \eta(n) \to 0, \quad \hbox{as}\quad n\to \infty.
$$
This last estimate and \eqref{eq:CVGCE-Nn-truncBIS} together immediately imply  
 $$ 
\NN^n_{F^n}  = \NN(\gamma_+ F^n) \wedge n    \wto \bar \NN := \NN(\bar\gamma_+)  \quad \hbox{weakly in } L^1(0,T),
$$
and that is the only necessary information for finishing the existence proof just as in the proof  of Theorem~\ref{theo:stability}.

\bigskip
\bigskip
\bibliographystyle{plain}

\end{document}